\newcommand{\argmin}{\operatornamewithlimits{\arg\!\min}}  
\newcommand{\C}{\mathcal{C}}  
\newcommand{\E}{\mathop{\mathbb{E}}}  
\newcommand{\prob}{\mathbb{P}}  
\newcommand{\R}{\mathbb{R}}  
\newcommand{\supp}{\operatorname{supp}}  
\newcommand{\X}{\mathcal{X}}
\newcommand{\Y}{\mathcal{Y}}
\newcommand{\Z}{\mathcal{Z}}
\renewcommand{\bar}{\overline}  
\renewcommand{\hat}{\widehat}  
\renewcommand{\tilde}{\widetilde}  
\renewcommand{\epsilon}{\varepsilon}  
\newtheoremstyle{myThm}   
     {\topsep}                          
     {\topsep}                          
     {\itshape}                         
     {}                                      
     {\sffamily\bfseries}           
     {.}                                     
     {.5em}                              
     {}                                      
\newtheoremstyle{myRem}   
     {\topsep}                          
     {\topsep}                          
     {}                         
     {}                                      
     {\sffamily\bfseries}           
     {.}                               
     {.5em}                              
     {}                                      
\newtheoremstyle{myDef}   
     {\topsep}                          
     {\topsep}                          
     {\itshape}                         
     {}                                      
     {\sffamily\bfseries}           
     {.}                                 
     {.5em}                              
     {}                                      
\newcounter{thm}
\theoremstyle{myThm}
\newtheorem{theorem}[thm]{Theorem}
\newtheorem{lemma}[thm]{Lemma}
\newtheorem{proposition}[thm]{Proposition}
\newenvironment{customthm}[1]
  {\innercustomthm}
  {\endinnercustomthm}
\theoremstyle{myRem}
\newtheorem{remark}[thm]{Remark}
\theoremstyle{myDef}
\newtheorem{definition}[thm]{Definition}
\begin{document}

\twocolumn[
    \aistatstitle{Nonparametric Indirect Active Learning}
    \aistatsauthor{ Shashank Singh }
    \aistatsaddress{ Max Planck Institute for Intelligent Systems, T\"ubingen, Germany }
]

\begin{abstract}
    Typical models of active learning assume a learner can directly manipulate or query a covariate $X$ to study its relationship with a response $Y$. However, if $X$ is a feature of a complex system, it may be possible only to indirectly influence $X$ by manipulating a control variable $Z$, a scenario we refer to as Indirect Active Learning. Under a nonparametric fixed-budget model of Indirect Active Learning, we study minimax convergence rates for estimating a local relationship between $X$ and $Y$, with different rates depending on the complexities and noise levels of the relationships between $Z$ and $X$ and between $X$ and $Y$. We also derive minimax rates for passive learning under comparable assumptions, finding in many cases that, while there is an asymptotic benefit to active learning, this benefit is fully realized by a simple two-stage learner that runs two passive experiments in sequence.
\end{abstract}

\section{INTRODUCTION}

Traditional models of active learning and experimental design assume a learner can directly manipulate, query, or design a covariate $X$ in order to probe its influence on a response $Y$. In many cases, however, a learner's ability to \emph{control} $X$ may be much more limited that its ability to \emph{observe} $X$. A well-studied example is the case of treatment noncompliance, in which an experimenter can prescribe a treatment to a participant (and later ask whether they took the prescribed treatment), but not force them to take the treatment~\citep{zelen1979new,zelen1990randomized,pearl1995causal}. More generally, the covariate $X$ might be the result of a complex process that the learner can influence and measure but not completely control. For example, genetic modifications intended to target specific genes often have limited precision, causing unintended random or systematic (but measurable) changes in other off-target genes and making it challenging to measure the effect of the intended modification~\citep{cellini2004unintended,hendel2015quantifying,wang2019off}. Similarly, it is often unclear whether manipulations conducted in psychological experiments reliably induce a desired psychological state in a participant, necessitating the use of manipulation checks~\citep{festinger1953laboratory,lench2014alternative}. In the most general case, studied here, the learner's influence over $X$ might be governed by an unknown blackbox function and subject to noise.

This paper analyzes nonparametric regression under an \emph{Indirect Active Learning} model, illustrated in Figure~\ref{fig:indirect_learning_model}, in which, rather than specifying the covariate $X$, the learner specifies a control variable $Z$ and then observes both a resulting covariate value $X = g(Z) + \sigma_X \epsilon_X$ and a resulting response $Y = f(X) + \sigma_Y \epsilon_Y$. Here, $f$ and $g$ are unknown functions, $\epsilon_X$ is unobserved centered additive \emph{covariate noise} of level $\sigma_X \geq 0$, and $\epsilon_Y$ is unobserved centered additive \emph{response noise} of level $\sigma_Y \geq 0$. This generalizes the usual setup of (homoskedastic) regression, in which $g(z) = z$, $\epsilon_X = 0$, $f(x) = \E[Y|X = x]$, $\epsilon_Y = Y - \E[Y|X = x]$~\citep{wasserman2006all,tsybakov2008introduction}.
\begin{figure}[htb]
    \centering
    \begin{tikzpicture}
        \node[shape=circle,draw=black] (Z) at (0,0) {$Z$};
        \node[shape=circle,draw=black] (E1) at (2,-1.3) {$\epsilon_X$};
        \node[shape=circle,draw=black] (X) at (2,0) {$X$};
        \node[shape=circle,draw=black] (E2) at (4,-1.3) {$\epsilon_Y$};
        \node[shape=circle,draw=black] (Y) at (4,0) {$Y$};
        \path [->](Z) edge node[below] {$g$} (X);
        \path [->](E1) edge node[left] {$+$} (X);
        \path [->](X) edge node[below] {$f$} (Y);
        \path [->](E2) edge node[left] {$+$} (Y);
        \draw[thick] ($(Z.north west)+(-0.25,0.25)$) rectangle ($(E2.south east)+(0.25,-0.25)$);
    \end{tikzpicture}
    \caption{Graphical model of Indirect Active Learning.}
    \label{fig:indirect_learning_model}
\end{figure}
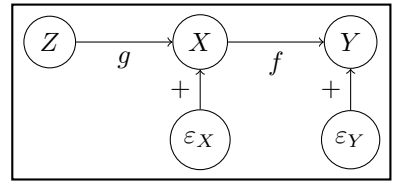
The learner repeats this process iteratively, using past observations of $(X,Y,Z)$ to select a new value for the control variable $Z$ in each iteration, until a prespecified learning budget has been met.
The learner's goal is to estimate the expected response $f(x_0)$ corresponding to a prespecified target covariate value $x_0$.\footnote{Our results extend easily to estimating a contrast, i.e., a difference $f(x_1) - f(x_0)$ at two covariate values $x_0$ and $x_1$.}
To do this efficiently, the learner must query ``informative'' values of $Z$, i.e., those such that $g(Z) + \sigma_X \epsilon_X$ is likely to be near to $x_0$. Thus, in contrast to traditional active learning, which involves only exploitation (selecting the most informative values of $X$), Indirect Active Learning involves an exploration-exploitation trade-off, as the learner must both search for the most informative choices of $Z$ and query these choices repeatedly to mitigate noise.

The present paper begins to characterize the statistical properties of optimal learning under this Indirect Active Learning model. Specifically, in Section~\ref{sec:active_case}, we study minimax convergence rates under a nonparametric model of Indirect Active Learning with a fixed budget, obtaining different rates depending on the complexities of the functions $f$ and $g$ and the noise levels $\sigma_X$ and $\sigma_Y$. Our upper bounds utilize an algorithm (Algorithm~\ref{alg:active_learning_algorithm} in Section~\ref{sec:active_case}) consisting of two passive learning stages, a ``pure exploration'' stage followed by a ``pure exploitation'' stage. We derive high-probability upper bounds on the error of this algorithm as well as minimax lower bounds (on the error of any algorithm) that match our upper bounds in most, though not all, cases.
Because passive experiments are typically much easier to carry out (and parallelize) than sequential active experiments, the optimality of this simple two-stage algorithm in many settings has important practical implications for optimal experimental design.

To understand the specific benefits of active learning under indirect control, Section~\ref{sec:passive_case} derives minimax rates for passive learning under comparable assumptions, which differ from those of typical nonparametric regression. In the passive case, the standard $k$-nearest neighbor ($k$-NN) regressor turns out to be minimax rate-optimal. However, in many cases, the two-stage learner described above asymptotically outperforms the $k$-NN regressor, demonstrating an advantage of (even a very limited form of) active learning over passive learning.

Finally, Section~\ref{sec:experimental_results} presents experiments validating our theoretical results and comparing the performance of active and passive learning in both simple synthetic data and in a more complex epidemiological forecasting application.



\section{NOTATION \& DEFINITIONS}
\label{sec:notation}

For positive real sequences $\{a_n\}_{n = 1}^\infty$ and $\{b_n\}_{n = 1}^\infty$, we write $a_n \lesssim b_n$ if $\limsup_{n \to \infty} a_n/b_n < \infty$, $a_n \gtrsim b_n$ if $b_n \lesssim a_n$, and $a_n \asymp b_n$ if both $a_n \gtrsim b_n$ and $a_n \lesssim b_n$. When ignoring polylogarithmic factors, we write $a_n \lesssim_{\log} b_n$ (to mean $a_n \lesssim b_n (\log b_n)^p$ for some $p > 0$), etc. For positive integers $n$, $[n] = \{1,...,n\}$ is the set of positive integers at most $n$. For $x \in \mathbb{R}$, $\lfloor x \rfloor = \max \{ z \in \mathbb{Z} : z \leq x\}$ is the greatest integer at most $x$.
For $x \in \R^d$, $\|x\| = \max_j |x_j|$ and $\|x\|_2 = \sqrt{\sum_j x_j^2}$ are $\ell_\infty$- and $\ell_2$-norms of $x$.

We now define three regularity conditions needed for our learning problem. The first is \emph{local H\"older smoothness}, used to measure the complexities of the functions $f$ and $g$:
\begin{definition}[Local H\"older Seminorm, Space, and Ball]
    For any $s \in (0, 1]$ and metric spaces $(\Z, \rho_\Z)$ and $(\X, \rho_\X)$, the \emph{local H\"older seminorm} $\|\cdot\|_{\C^s(\Z, z_0; \X)}$ around a point $z_0 \in \Z$ is defined for $f : \Z \to \X$ by
    \[\|f\|_{\C^s(\Z, z_0; \X)} := \sup_{z \neq z_0 \in \Z} \frac{\rho_\X \left( f(z), f(z_0) \right)}{\rho_\Z^s(x, y)},\]
    the \emph{local H\"older space} $\C^s(\Z, z_0; \X)$ around $z_0 \in \Z$ is
    \[\C^s(\Z, z_0; \X) := \left\{ f : \Z \to \X \text{ s.t. } \|f\|_{\C^s(\Z, z_0; \X)} < \infty \right\}.\]
    For $L \geq 0$, the \emph{local H\"older ball} $\C^s(\Z; \X; L)$ is
    \[\C^s(\Z, z_0; \X; L) := \left\{f : \Z \to \X \text{ s.t. } \|f\|_{\C^s(\Z, x_0; \X)} \leq L \right\}.\]
    \label{def:local_holder}
\end{definition}
\emph{Local} H\"older continuity is much weaker than the more common notion \emph{global} H\"older continuity (defined in Appendix~\ref{app:passive_lower_bounds}).
To prove the most general possible bounds, our upper bounds assume only local H\"older continuity around particular points, while matching lower bounds are given under global H\"older continuity.

We next provide a definition of \emph{local dimension} that will be used to measure intrinsic complexities of the random variables $Z$ and $\epsilon_X$:
\begin{definition}[Local Dimension]
    A random variable $X$ taking values in a metric space $(\X, \rho)$ has \emph{local dimension} $d$ around a point $x \in \X$ if
    \[\liminf_{r \downarrow 0} \frac{\prob \left[ \rho(X, x) \leq r \right]}{r^d} > 0.\]
    \label{def:local_assumption}
\end{definition}
This notion of local dimension can be thought of as a local version of Frostman Dimension~\citep{frostman1935potentiel}, and is closely related to the Hausdorff and (lower) box counting or Minkowski dimensions of the underlying space $\X$ (see Corollary 4.12 of \citet{falconer2004fractal} and Chapter 8 of \citet{mattila1999geometry}).

Finally, we state a \emph{sub-Gaussian tail condition}, used to control the noise variables $\epsilon_X$ and $\epsilon_Y$:
\begin{definition}[Sub-Gaussian Random Variable]
    For any integer $d > 0$, an $\R^d$-valued random variable $\epsilon$ is said to be sub-Gaussian if, for all $t \in \R^d$,
    $\E \left[ e^{\langle \epsilon, t \rangle} \right] \leq e^{\|t\|_2^2/2}$.
    \label{def:sub_gaussain}
\end{definition}
Sub-Gaussianity is usually defined with a parameter $\sigma \geq 0$ indicating the scale of $\epsilon$. In this work, to be consistent with other assumptions (namely, the local dimension assumption on $\epsilon_X$), we separate the scale parameter $\sigma$, explicitly writing $\sigma \epsilon$ as needed.

\section{PROBLEM SETUP}
\label{subsec:assumptions}

We now describe our passive and active learning problems.

\paragraph{Passive Setting}
In the passive setting, for some known $x_0 \in \X \subseteq \R^d$, the learner must estimate $f(x_0)$ from pre-collected IID data $\{(X_i,Y_i,Z_i)\}_{i = 1}^n \in \left( \X \times \R \times \Z \right)^n$, assuming:
\begin{enumerate}[nosep]
    \item[\textbf{A1)}] Data are drawn from the model $X = g(Z) + \sigma_X \epsilon_X$, $Y = f(X) + \sigma_Y \epsilon_Y$, as illustrated in Figure~\ref{fig:indirect_learning_model}.
    \item[\textbf{A2)}] The marginal distribution $P_Z$ of $Z$ has dimension $d_Z$ around some $z_0 \in \Z$ with $g(z_0) = x_0$.
    \item[\textbf{A3)}] The distribution of $\epsilon_X$ has dimension $d_X$ around $0$.
    \item[\textbf{A4)}] $g$ and $f$ lie in local H\"older balls $\C^{s_g}(\Z, z_0; \X; L_g)$ and $\C^{s_f}(\X, x_0; \R; L_f)$, respectively.
    \item[\textbf{A5)}] $\epsilon_X$ and $\epsilon_Y$ are sub-Gaussian.
\end{enumerate}
Note that our bounds will depend on the intrinsic dimensions $d_Z$ of $Z$ and $d_X$ of $\epsilon_X$, but not on the extrinsic dimension $d$ of $X$.
Assumptions A2), A3), A4), and A5) are quite mild, comparable to the weakest assumptions typically made in nonparametric regression. The strongest assumption is the heteroscedastic additive error model in A1), but this is primarily for convenience and can also be weakened substantially; for example, our upper bounds hold even if $\sigma_Y$ is a sufficiently a smooth function of $X$ around $x_0$; see Remark~\ref{remark:heteroscedastic_errors_app} in the Appendix for details.

\paragraph{Active Setting}
In the active setting, at each time step $i$, the learner must select a new sample $Z_i$ based on the past data $\{(X_j,Y_j,Z_j)\}_{j = 1}^{i - 1}$. The learner may also utilize the ``prior'' distribution $P_Z$ of $Z$ from the passive case.\footnote{Although not a prior on $z_0$ in the typical Bayesian sense, $P_Z$ plays a similar role in that the estimators' performance will depend on how concentrated $P_Z$ is around $z_0$.}
After selecting $Z_i$, the learner observes the covariate-response pair $(X_i, Y_i)$, and the process repeats. After $n$ iterations of this process, the learner must estimate $f(x_0)$ (for some $x_0 \in \X$ known in advance), using the dataset $\{(X_i,Y_i,Z_i)\}_{i = 1}^n$. The learner may also assume \textbf{A1)}-\textbf{A5)} from the passive setting.

\section{RELATED WORK}
\label{sec:related_work}

Several models of active learning have been previously considered~\citep{settles2009active}, including Query Synthesis~\citep{angluin1988queries,wang2015active} or Adaptive Sampling~\citep{castro2005faster}, in which the learner can request a label for any desired query $x \in \X$, Pool-Based Sampling~\citep{lewis1995sequential,hanneke2014theory}, in which the learner can request a label for any $x \in S$ given a finite ``data pool'' $S \subseteq \X$, and Stream-Based Selective Sampling~\citep{atlas1989training}, in which a learner can request labels for individual data points as they appear sequentially.
Although assumed in many theoretical analyses~\citep{burnashev1974interval,sung1994active,castro2005faster}, Query Synthesis can often be impractical, as generating arbitrary $x \in \X$ to query may be difficult (although progress is being made on this front, for example using modern generative models~\citep{zhu2017generative}).
Indirect Active Learning can be viewed as a significant relaxation of Query Synthesis, under which the learner generates new queries with only limited control over the content of the query. Indirect Active Learning can also be related to Pool-Based Sampling, e.g., if $g$ maps $x$ to its nearest neighbor in a fixed data pool $S \subseteq \X$; however, such a formulation violates the H\"older smoothness assumption we make on $g$, and we do not pursue this here.

While we are not aware of prior research within the Indirect Active Learning paradigm, several closely related paradigms have been studied previously:

\paragraph{Active Nonparametric Regression}
Within the Query Synthesis paradigm, \citet{castro2005faster} showed that, in the standard nonparametric regression setting (i.e., fixing $g(z) = z$), considering globally H\"older-smooth $f$, active learning provides no asymptotic advantages over passive learning. This contrasts with our results under Indirect Active Learning, where we show that active learning often asymptotically improves over passive learning (by leveraging an estimate of $g$ to better select informative queries).

\paragraph{Active Learning with Feature Noise}
Our generative model (Figure~\ref{fig:indirect_learning_model}) generalizes the Berkson errors-in-variables regression model~\citep{berkson1950there}, the special case when $\X = \Z$ and $g$ is known to be the identity function.
The Berkson model, which has been studied extensively in passive supervised learning~\citep{buonaccorsi2010measurement,carroll2006measurement}, was used by \citet{ramdas2014analysis} to study the effects of feature noise in active binary nonparametric classification. To the best of our knowledge, this is the only prior work generalizing Query Synthesis such that the learner cannot exactly select the covariate.

Note that the Berkson model is distinct from the classical errors-in-variables regression model~\citep{fuller2009measurement}, in which noise would be applied \emph{after} observing $X$ (i.e., we would have $X = g(Z)$ and $Y = f(X + \sigma_X \epsilon_X) + \sigma_Y \epsilon_Y$); the Berkson model is appropriate in our setting where noise affects the learner's ability to \emph{control} $X$ rather than to \emph{measure} $X$. One could additionally consider noise in the measurement of $X$, which has the effect of smoothing the observed relationship between $X$ and $Y$; however, this additional bias often makes the learning problem statistically intractable, with optimal convergence rates decaying only polylogarithmically with the sample size $n$ under common (e.g., Gaussian) noise models~\citep{fan1993nonparametric}. Measurement error in $X$ could potentially be mitigated using knowledge of $Z$~\citep[e.g., using two-stage regression, as in instrumental variable methods;][]{sargan1958estimation}, which we leave as a direction for future work.

\paragraph{Active Learning on Manifolds}
When the dimension $d_Z$ of $Z$ is smaller than that of $X$ and $g$ is sufficiently smooth, the model $X = g(Z) + \sigma_X \epsilon_X$ implies that $X$ lies near a low-dimensional manifold in $\X$. This is a natural model under which to study active learning on an unknown manifold (i.e., in the presence of unknown structural constraints on the covariate $\X$). To the best of our knowledge, no statistical theory has been developed for this problem, although a number of practical methods have been proposed~\citep{zhou2014active,li2020active,huang2020general,sreenivasaiah2021meal}.
Our results show that, whereas rates for passive learning often depend on the total intrinsic dimension $d_X + d_Z/s_g$ of the distribution of $X$, rates for active learning depend mostly on the smaller dimension $d_X$ of the noise $\epsilon_X$ in $X$. Note that both of these intrinsic dimensions may be much smaller than the extrinsic dimension of $X$, which does not appear in our rates.


\paragraph{Approximate Bayesian Computation}

Approximate Bayesian Computation (ABC) is an approach to estimating a posterior distribution for parameters of a complex model that avoids explicit computation of a likelihood function by using a simulation to relate the parameter to measurable data~\citep{csillery2010approximate,sunnaaker2013approximate}. The two-stage Algorithm~\ref{alg:active_learning_algorithm} we propose is similar in spirit to the rejection algorithm typically used in ABC~\citep{csillery2010approximate}, in that different values of the input parameter $Z$ are first sampled randomly and then evaluated in terms of similarity between experimental and target measurements $X$. However, rather than to construct a posterior distribution for the ``true value'' of the input parameters $Z$ of a simulation, our goal is to estimate an outcome $Y$ of the simulation. Moreover, our formulation and analysis of Indirect Active Learning is completely frequentist, although, as noted previously, the distribution $P_Z$ can be regarded as a prior on $z_0$.

\paragraph{Instrumental Variables Methods}
Our model of Indirect Active Learning (Figure~\ref{fig:indirect_learning_model}) assumes that the relationship between $X$ and $Y$ is unconfounded given $Z$, i.e., that $\epsilon_Y$ is independent of $X$ and that $Y$ is conditionally independent of $\epsilon_X$ given $X$. In general, as illustrated in Figure~\ref{fig:instrumental_variable_model}, there may exist unobserved confounders that affect both $X$ and $Y$, leading to the causal model assumed by instrumental variable methods, which attempt to use an unconfounded \emph{instrumental variable} $Z$ to deconfound the relationship between $X$ and $Y$, typically by projecting $X$ onto its variation caused by $Z$~\citep{baiocchi2014instrumental}.
\begin{figure}[hptb]
    \centering
    \begin{tikzpicture}
        \node[shape=circle,draw=black] (Z) at (0,0) {$Z$};
        \node[shape=circle,draw=black] (E1) at (2,-1.5) {$\epsilon$};
        \node[shape=circle,draw=black] (X) at (2,0) {$X$};
        \node[shape=circle,draw=black] (Y) at (4,0) {$Y$};
        \path [->](Z) edge node[below] {$g$} (X);
        \path [->](E1) edge node[left] {} (X);
        \path [->](X) edge node[below] {$f$} (Y);
        \path [->](E1) edge node[left] {} (Y);
    \end{tikzpicture}
    \caption{Structural causal model assumed by instrumental variable methods.}
    \label{fig:instrumental_variable_model}
\end{figure}
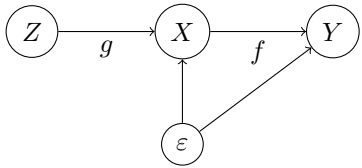
In the passive case, \citet{singh2019kernel} provided minimax rates for nonparametric instrumental variable regression, but we are not aware of work characterizing the potential benefits of actively manipulating the instrumental variable $Z$. Indeed, as we discuss further in Section~\ref{sec:conclusions}, one motivation for the present work is to provide a foundation for such work by first characterizing minimax-optimal learning in the absence of confounders.


\section{BOUNDS FOR PASSIVE LEARNING}
\label{sec:passive_case}
To provide an appropriate baseline for performance in Indirect Active Learning, we first consider the problem of \emph{passively} estimating $f(x_0)$ from an IID dataset $\{(X_i,Y_i,Z_i)\}_{i = 1}^n\}$. Upon first glance, since observations of $Z$ provide no information about the relationship between $X$ and $Y$, one might suppose that this passive problem reduces to standard passive regression, in which we have only IID observations of $(X,Y)$. However, while observations of $Z$ are indeed uninformative here, the assumption $X = g(Z) + \sigma_X \epsilon_X$ imposes constraints on the distribution of $X$ that differ from the standard passive regression setting, resulting in significantly different convergence rates. Obtaining minimax rates for the passive case will therefore facilitate interpretation of our main results for the active case, especially to understand the benefit of active learning over passive learning.

In the passive case, we consider a $k$-nearest neighbor ($k$-NN) regression estimate of $f(x_0)$:
\begin{definition}[Nearest-Neighbor Order and $k$-Nearest Neighbor Regressor]
    Given a point $x_0$ lying in a metric space $(\X, \rho)$ and a finite dataset $X_1,...,X_n \in \X$, the \emph{nearest-neighbor order $\pi(x_0;\{X_1,...,X_n\})$} is a permutation of $[n]$ such that
    \begin{align*}
        \rho \left( x_0, X_{\pi_1(x_0;\{X_1,...,X_n\})} \right)
        & \leq \rho \left( x_0, X_{\pi_2(x_0;\{X_1,...,X_n\})} \right) \\
        & \leq \cdots \\
        & \leq \rho \left( x_0, X_{\pi_n(x_0;\{X_1,...,X_n\})} \right),
    \end{align*}
      with ties broken arbitrarily.
    The $k$-nearest neighbor regressor $\hat f_{x_0,k}$ of $f$ at $x_0$ is defined by
    \begin{equation}
        \hat f_{x_0,k} := \frac{1}{k} \sum_{i = 1}^k Y_{\pi_i(x_0; \{X_1,...,X_n\})},
        \label{eq:knn_regressor}
    \end{equation}
    \label{def:nearest_neighbors}
\end{definition}
Crucially, in contrast to most other approaches to nonparametric regression, $k$-NN regression automatically adapts to unknown low-dimensional structure in the distribution of $X_1,...,X_n$~\citep{kpotufe2011k}, which is important in our setting because the model $X = g(Z) + \sigma_X \epsilon_X$ can constrain the intrinsic dimension of $X$. In contrast, we expect many methods, such as kernel methods with uniform bandwidth, to be sub-optimal in our setting.
The following result, proven in Appendix~\ref{app:passive_upper_bound}, bounds the error of the $k$-NN estimate~\eqref{eq:knn_regressor}:
\begin{theorem}[Passive Upper Bound]
    Let $\delta \in (0, 1)$ and let $\hat f_{x_0,k}$ denote the $k$-NN regression from Eq.~\eqref{eq:knn_regressor} with $k$ chosen according to Eq.~\eqref{eq:passive_k} in Appendix~\ref{app:passive_upper_bound}.
    Under Assumptions A1)-A5), there exists a constant $C > 0$, depending only on $L_g$, $s_g$, $L_f$, $s_f$, $d_X$, $d_Z$, such that
    \[\Pr \left[ \left| \hat f_{x_0,k} - f(x_0) \right|
        > C \Phi_P \log \sfrac{1}{\delta} \right] \leq \delta,\]
    where
    \begin{align}
        \Phi_P
        & = \max \Bigg\{
            \notag
            \left( \frac{1}{n} \right)^{\frac{s_fs_g}{d_Z}},
            \left( \frac{\sigma_X^{d_X}}{n} \right)^{\frac{s_f}{d_X + d_Z/s_g}}, \\
            & \left( \frac{\sigma_Y^2}{n} \right)^{\frac{s_f}{2s_f + d_Z/s_g}},
            \left( \frac{\sigma_X^{d_X} \sigma_Y^2}{n} \right)^{\frac{s_f}{2s_f + d_X + d_Z/s_g}}
        \Bigg\}.
      \label{ineq:passive_upper_bound}
    \end{align}
    \label{thm:passive_upper_bound}
\end{theorem}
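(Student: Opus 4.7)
The plan is the standard bias-variance decomposition for $k$-nearest-neighbor regression. Writing $\pi_i = \pi_i(x_0; \{X_1, \ldots, X_n\})$ and $r_k := \|X_{\pi_k} - x_0\|$,
\[\hat f_{x_0,k} - f(x_0) = \frac{1}{k} \sum_{i=1}^k \sigma_Y \epsilon_{Y, \pi_i} \;+\; \frac{1}{k} \sum_{i=1}^k \bigl( f(X_{\pi_i}) - f(x_0) \bigr).\]
Conditional on $(X_1, \ldots, X_n)$, the first (noise) term is a mean of $k$ independent sub-Gaussian variables of scale $\sigma_Y$, so by A5) a standard sub-Gaussian tail bound gives $|\cdot| \lesssim \sigma_Y \sqrt{\log(1/\delta)/k}$ with probability at least $1 - \delta/2$. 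The second (bias) term is controlled by A4): $|f(X_{\pi_i}) - f(x_0)| \leq L_f \|X_{\pi_i} - x_0\|^{s_f} \leq L_f r_k^{s_f}$.

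The main task is bounding $r_k$ via a lower bound on the local ball probability $\prob\bigl[\|X - x_0\| \leq r\bigr]$. Since $g(z_0) = x_0$ and $g \in \C^{s_g}(\Z,z_0;\X;L_g)$, the event $\|g(Z) - g(z_0)\| \leq r/2$ is implied by $\|Z - z_0\| \leq (r/(2L_g))^{1/s_g}$; by A2), this has probability at least a positive constant times $(r/L_g)^{d_Z/s_g}$ for $r$ small enough. Independently, by A3) (for $r \lesssim \sigma_X$) or sub-Gaussianity of $\epsilon_X$ (for $r \gtrsim \sigma_X$), the event $\sigma_X\|\epsilon_X\| \leq r/2$ has probability at least a constant times $\min\{1, (r/\sigma_X)^{d_X}\}$. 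Independence of $Z$ and $\epsilon_X$ then yields
\[\prob\bigl[\|X - x_0\| \leq r\bigr] \;\gtrsim\; (r/L_g)^{d_Z/s_g}\,\min\bigl\{1,\,(r/\sigma_X)^{d_X}\bigr\}.\]
A Chernoff bound on the binomial count of $X_i$ falling in the ball of radius $r$ around $x_0$ then shows that, for $r$ chosen so the right-hand side is at least $C(k + \log(1/\delta))/n$, one has $r_k \leq r$ with probability at least $1 - \delta/2$. Inverting, and distinguishing which of the two factors binds,
\[r_k \;\lesssim\; \max\Bigl\{(k/n)^{s_g/d_Z},\, (\sigma_X^{d_X} k/n)^{1/(d_X + d_Z/s_g)}\Bigr\}\]
up to constants and polylogarithmic factors in $1/\delta$.

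Combining, with probability at least $1 - \delta$,
\[\bigl|\hat f_{x_0,k} - f(x_0)\bigr| \;\lesssim\; L_f r_k^{s_f} + \sigma_Y \sqrt{\log(1/\delta)/k}.\]
It remains to choose $k$ (this is the quantity defined by Eq.~\eqref{eq:passive_k} in Appendix~\ref{app:passive_upper_bound}). Taking $k$ small drives the variance term down to $\sigma_Y$, and produces the first two terms of $\Phi_P$ via the two cases of the $r_k$ bound; these dominate when $\sigma_Y$ is small. When the variance is not negligible, balancing it against whichever bias regime is active gives the third term (balancing $(k/n)^{s_f s_g/d_Z}$ with $\sigma_Y/\sqrt{k}$) or the fourth term (balancing $(\sigma_X^{d_X} k/n)^{s_f/(d_X + d_Z/s_g)}$ with $\sigma_Y/\sqrt{k}$). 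Choosing $k$ at the balancing point of the dominant regime, the total error is at most $C\Phi_P \log(1/\delta)$.

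The main obstacle I anticipate is the crossover between the two regimes in the ball-probability bound: the argument must be set up so that the same choice of $k$ automatically selects the correct regime (above or below the noise scale $\sigma_X$), and so that the constants depending on $L_g, s_g, L_f, s_f, d_X, d_Z$ (and the sub-Gaussian parameter of $\epsilon_X$) can be collected into a single $C$ uniformly across the four cases in the maximum.
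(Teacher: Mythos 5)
Your proposal is correct and follows essentially the same route as the paper: the same bias--variance decomposition, sub-Gaussian concentration for the $\epsilon_Y$ term, local H\"older continuity of $f$ to reduce the bias to the $k$-NN radius $r_k$, a small-ball/Chernoff argument giving $r_k \lesssim \max\{(k/n)^{s_g/d_Z}, (\sigma_X^{d_X}k/n)^{1/(d_X+d_Z/s_g)}\}$, and the same balancing choices of $k$ that yield the four terms of $\Phi_P$. The only (harmless) differences are organizational---you bound $\prob[\|X-x_0\|\leq r]$ directly as a product over the two independent sources of randomness, where the paper instead composes its local-dimension lemmas and handles the low-noise regime via a maximal inequality over $\{\epsilon_{X,i}\}$---and one small attribution slip: the constant lower bound on $\prob[\sigma_X\|\epsilon_X\|\leq r/2]$ when $r\gtrsim\sigma_X$ comes from A3) plus monotonicity of the ball probability, not from sub-Gaussianity, which only controls upper tails.
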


Before discussing this result further, we present a matching lower bound, proven in Appendix~\ref{app:passive_lower_bounds}, showing that the rate $\Phi_P$ in Theorem~\ref{thm:passive_upper_bound} is the optimal in the passive case:

\begin{theorem}[Passive Lower Bound]
    There exist constants $c_1, c_2, n_0 > 0$, depending only on $L_g$, $s_g$, $L_f$, $s_f$, $d_X$, $d_Z$, such that, for all $n \geq n_0$, for any passive estimator $\hat f_{x_0}$,
    \begin{align}
        \sup_{g,f,P_Z,\epsilon_X,\epsilon_Y} \Pr_D \Bigg[ \left| \hat f_{x_0} - f(x_0) \right| \geq c_1 \Phi_P \Bigg] \geq c_2,
        \label{ineq:passive_lower_bound}
    \end{align}
    \label{thm:passive_lower_bound}
    where $\Phi_P$ is the passive minimax rate defined in Eq.~\eqref{ineq:passive_upper_bound} and the supremum is taken over all $g$, $f$, $P_Z$, $\epsilon_X$, $\epsilon_Y$ satisfying Assumptions A1)-A5).
\end{theorem}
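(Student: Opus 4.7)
The plan is to prove this lower bound by applying Le Cam's two-point method separately for each of the four terms in $\Phi_P$ and combining via max (since $\Phi_P$ is the max of these terms). For each term, I will construct two instances $(g, f^{(k)}, P_Z, \epsilon_X, \epsilon_Y)$, $k \in \{0,1\}$, satisfying A1)--A5), with $|f^{(0)}(x_0) - f^{(1)}(x_0)|$ on the order of the target term and total-variation distance between the induced joint data distributions bounded by a constant less than $1$; Le Cam's inequality then yields the stated lower bound.

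For a common adversarial setup, take $\Z = \R^{d_Z}$ and $\X = \R^{d_Z + d_X}$, with $z_0 = 0$ and $x_0 = 0$. Let $P_Z$ be uniform on $[-1,1]^{d_Z}$, $\epsilon_X$ uniform on $[-1,1]^{d_X}$ (embedded in the last $d_X$ coordinates of $\X$), and $\epsilon_Y$ standard Gaussian. Define $g : \Z \to \X$ by $g(z)_j = L_g \,\mathrm{sgn}(z_j) |z_j|^{s_g}$ for $j \leq d_Z$ and $g(z)_j = 0$ otherwise. One easily checks that $g$ is globally $s_g$-H\"older with constant $L_g$, $P_Z$ has local dimension $d_Z$ at $0$, and $\epsilon_X$ has local dimension $d_X$ at $0$. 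Both hypotheses share this $(g, P_Z, \epsilon_X, \epsilon_Y)$; only $f$ varies, with $f^{(0)} \equiv 0$ and $f^{(1)}(x) = L_f h^{s_f} \psi(\|x\|/h) =: \phi_h(x)$, where $\psi : [0, \infty) \to [0,1]$ is a fixed smooth function supported on $[0, 1)$ with $\psi(0) > 0$ and unit $s_f$-H\"older seminorm; then $\phi_h$ is globally $s_f$-H\"older with constant $L_f$ and $\phi_h(x_0) \asymp L_f h^{s_f}$. A direct calculation using the product structure gives $\prob[X \in B(x_0, h)] \asymp h^{d_Z/s_g}$ when $h \gtrsim \sigma_X$, and $\asymp h^{d_X + d_Z/s_g} / \sigma_X^{d_X}$ when $h \lesssim \sigma_X$.

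For Cases 1 and 2 (first two terms of $\Phi_P$), I choose the bandwidth $h$ so that $n \prob[X \in B(x_0, h)] \leq 1/2$; the two density regimes yield $h \asymp n^{-s_g/d_Z}$ (Case 1) and $h \asymp (\sigma_X^{d_X}/n)^{1/(d_X + d_Z/s_g)}$ (Case 2). On the event $\{X_i \notin B(x_0, h) \,\forall i\}$, both hypotheses produce $Y_i = \sigma_Y \epsilon_{Y,i}$ since $f^{(0)}(X_i) = f^{(1)}(X_i) = 0$, so the full joint data distributions coincide, and a union bound gives $\mathrm{TV} \leq 1/2$. For Cases 3 and 4 (last two terms), I use Gaussianity of $\epsilon_Y$ to compute $\mathrm{KL}(P^{(0)}, P^{(1)}) = (n/2\sigma_Y^2) \E[\phi_h(X)^2] \leq (n/2\sigma_Y^2)(L_f h^{s_f})^2 \prob[X \in B(x_0, h)]$, set $\mathrm{KL} \leq$ const and solve in each density regime to get $h \asymp (\sigma_Y^2/n)^{1/(2s_f + d_Z/s_g)}$ (Case 3) or $(\sigma_X^{d_X}\sigma_Y^2/n)^{1/(2s_f + d_X + d_Z/s_g)}$ (Case 4); Pinsker's inequality then gives $\mathrm{TV} \leq 1/2$. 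In each case Le Cam's inequality produces an error lower bound $\gtrsim L_f h^{s_f}$, matching the corresponding term of $\Phi_P$.

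The main obstacle is not any deep difficulty but careful bookkeeping: verifying A1)--A5) globally with the stated constants for the single universal construction above, and ensuring that the density regime ($h \gtrsim \sigma_X$ or $h \lesssim \sigma_X$) used in each case coincides with the parameter regime in which that case's term is the dominant one in $\Phi_P$. This alignment is automatic up to constants---for example, Case 1's term dominates Case 2's precisely when $\sigma_X \lesssim n^{-s_g/d_Z}$, placing us in the $h \gtrsim \sigma_X$ branch as needed---so the four bounds together cover all parameter regimes and yield the lower bound $\Phi_P$.
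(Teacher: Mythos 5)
Your proposal is correct, but it takes a genuinely different route from the paper's for a substantial part of the bound. The paper proves the two $\sigma_Y$-dependent terms via the method of two fuzzy hypotheses (Lemma~\ref{lemma:tsybakov_two_fuzzy_hypotheses}): it builds a family $\Theta_{g,M}$ of bump-perturbed functions $g$ with disjoint supports (Lemma~\ref{lemma:Theta_g}), puts a uniform prior on $g$, and uses the randomness of $g$ to argue that with probability $\lesssim M^{-d_Z}\asymp (h+\sigma_X)^{d_Z/s_g}$ a sample is informative (Proposition~\ref{prop:large_response_noise_passive_case}); the first term is obtained by a separate combinatorial argument over the same family (Proposition~\ref{prop:lower_bound_no_noise}), and only the second term uses a fixed $g$ with orthogonally embedded uniform noise (Proposition~\ref{prop:passive_lower_bound_no_response_noise}), which is essentially your Case~2. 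You instead use a single fixed $g(z)=L_g\,\mathrm{sgn}(z_j)|z_j|^{s_g}$ whose graph passes through $x_0$, so that $\Pr[\|g(Z)-x_0\|\le h]\asymp h^{d_Z/s_g}$ comes from the geometry of $g$ and the distribution $P_Z$ rather than from a prior over $g$, and you run plain two-point (Le Cam) arguments on $f$ for all four terms. For the \emph{passive} theorem this is valid and arguably cleaner and more unified: the regime alignment you invoke (Case~1 vs.~2 governed by $\sigma_X$ vs.~$n^{-s_g/d_Z}$, Case~3 vs.~4 governed by $\sigma_X$ vs.~$h_3$) is exactly right, the TV/KL computations are standard, and the embedding into $\R^{d_Z+d_X}$ is permitted since the results are stated independently of the extrinsic dimension. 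What the paper's heavier machinery buys is reuse in the active case: your construction relies crucially on $Z_i\sim P_Z$ i.i.d., and an active learner who knows the fixed $g$ would simply query $Z_i=z_0$ and collect $n$ informative samples, so your argument would not transfer to Theorem~\ref{thm:active_lower_bound}, whereas the paper's Lemma~\ref{lemma:TV_upper_bound} and Proposition~\ref{prop:lower_bound_no_noise} are written to cover both settings. Two small bookkeeping points: the H\"older constant of $z\mapsto\mathrm{sgn}(z)|z|^{s_g}$ is $2^{1-s_g}$, not $1$, so rescale $g$ (harmless since constants may depend on $L_g,s_g$); and in Cases~3--4 you should state the KL bound with $\Pr[X\in B(x_0,h)]\le \min\{1,(h/L_g)^{d_Z/s_g}\}\min\{1,(h/\sigma_X)^{d_X}\}$ so the boundary regime $h\asymp\sigma_X$ is covered up to constants.
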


Comparing Theorems~\ref{thm:passive_upper_bound} and \ref{thm:passive_lower_bound}, we see that the $k$-NN regressor in Eq.~\eqref{eq:knn_regressor} achieves the minimax optimal rate $\Phi_P$, given in Eq.~\eqref{ineq:passive_upper_bound}, for the passive case.
Each of the terms in $\Phi_P$ can dominate, depending on the relative magnitudes of $\sigma_X$, $\sigma_Y$, and $n$. If the covariate noise level $\sigma_X$ is sufficiently small, one can show that the distribution of $X$ concentrates near a manifold of dimension $d_Z/s_g$ within $\X$. Hence, depending on the response noise level $\sigma_Y$, we obtain the rate $\asymp n^{-\frac{s_fs_g}{d_Z}}$ or $\asymp n^{-\frac{s_f}{2s_f + d_Z/s_g}}$, corresponding to the rate of nonparametric regression over a covariate space of dimension $d_Z/s_g$, in the absence of response noise~\citep{kohler2014optimal} or the presence of response noise~\citep{tsybakov2008introduction}, respectively.
Increasing the level $\sigma_X$ of covariate noise leads to slower rates, respectively
$\asymp n^{-\frac{s_f}{d_X + d_Z/s_g}}$ or $\asymp n^{-\frac{s_f}{2s_f + d_X + d_Z/s_g}}$, equivalent to increasing the dimension of the covariate space by $d_X$.
An alternative interpretation is as a multi-resolution bound: fixing $\sigma_X > 0$ and $\sigma_Y > 0$, the convergence rate slows as $n \to \infty$, initially from $n^{-s_fs_g/d_Z}$ to $n^{-\frac{s_f}{d_X + d_Z/s_g}}$ (if $d_X \geq 2s_f$) or $n^{-\frac{s_f}{2s_f + d_Z/s_g}}$ (if $d_X \leq 2s_f$), and then further to $n^{-\frac{s_f}{2s_f + d_X + d_Z/s_g}}$.

\section{BOUNDS FOR ACTIVE LEARNING}
\label{sec:active_case}
We now turn to the active case. Our upper bounds use a ``two-stage'' algorithm, detailed in  Algorithm~\ref{alg:active_learning_algorithm}, which sequentially performs two passive experiments, a ``pure exploration'' experiment followed by a ``pure exploitation'' experiment. In the first stage, the algorithm randomly samples the control variable $Z$ according to the given ``prior'' distribution $P_Z$ (as in the passive algorithm). Each sample value of $Z$ is repeated several times, and the resulting values of $X$ are averaged to determine which observed value of $Z$ is most likely to return a value of $X$ near $x_0$. The algorithm then repeatedly samples this value of $Z$ (hopefully obtaining a large number of samples with $X$ near $x_0$), and finally applies the $k$-NN regressor from Eq.~\eqref{eq:knn_regressor} over the resulting dataset.
\begin{algorithm2e}[htb]
    \DontPrintSemicolon
      \KwInput{Budget $n$,
               integers $\ell, k \in [1, n/2]$,
               distribution $P_Z$ on $\Z$,
               target point $x_0 \in \X$}
      \KwOutput{Estimate $\hat f_{x_0}$ of $x_0$}
      $m \leftarrow \lfloor n/2 \rfloor$ \\
      \For(\tcp*[h]{exploration stage}){$i \in [\lfloor m/\ell \rfloor]$}{
        Draw $Z_i \sim P_Z$ \\
        \For{$l \in [\ell]$}{
            Observe $X_{i,l} = g(Z_i) + \sigma_X \epsilon_{X,i,l}$ and $Y_{i,l} = f(X_{i,l}) + \sigma_Y \epsilon_{Y,i,l}$ \\
        }
        Compute $\bar X_i = \sum_{l = 1}^\ell X_{i,l}$ \\
      }
      $i^* \leftarrow \argmin_{i \in [\lfloor m/\ell \rfloor]} \left\| \bar X_i - x_0 \right\|$ \\
      \For(\tcp*[h]{exploitation stage}){$i \in [n - m]$}{
        Sample $X_{m+i} = g(Z_{i^*}) + \sigma_X \epsilon_{X,m+i}$ and $Y_{m+i} = f(X_{m+i}) + \sigma_Y \epsilon_{Y,m+i}$ \\
      }
      \Return{$\hat f_{x_0} = \frac{1}{k} \sum_{j = 1}^k Y_{\pi_j(x_0; \{X_1,...,X_n\})}$}
    \caption{Two-stage active learning algorithm.}
    \label{alg:active_learning_algorithm}
\end{algorithm2e}

Because passive experiments are often much easier to carry out (and parallelize) than sequential active experiments, this two-stage design may be practical in many settings where a fully iterative procedure would be expensive or time-consuming to implement. Nevertheless, Algorithm~\ref{alg:active_learning_algorithm} achieves the optimal convergence rate in most, though not all, experimental settings, and typically converges faster than the passive minimax rate. In particular, we have the following upper bound, proven in Appendix~\ref{app:active_upper_bound}:
\begin{theorem}[Active Upper Bound]
    Let $\delta \in (0, 1)$, and let $\hat f_{x_0}$ denote the estimator proposed in Algorithm~\ref{alg:active_learning_algorithm}, with $k$ and $\ell$ chosen according to Eqs.~\eqref{eq:active_k} and \eqref{eq:active_ell} in Appendix~\ref{app:active_upper_bound}.
    Then, under Assumptions A1)-A5), for some constant $C > 0$ depending only on $L_g$, $s_g$, $L_f$, $s_f$, $d_X$, and $d_Z$, with probability $\geq 1 - \delta$, $\left| \hat f_{x_0} - f(x_0) \right| \leq C \Phi_A^* \log \sfrac{1}{\delta}$, where
    \begin{align}
        \Phi_A^*
        := & \max \Bigg\{
            \notag
            \left( \frac{1}{n} \right)^{\frac{s_fs_g}{d_Z}},
            \left( \frac{\sigma_X^{d_X}}{n} \right)^{\frac{s_f}{d_X}},
            \frac{\sigma_Y}{\sqrt{n}}, \\
            \label{rate:active_upper_bound}
            & \left( \frac{\sigma_Y^2 \sigma_X^{d_X}}{n} \right)^{\frac{s_f}{2s_f + d_X}},
            \left( \frac{\sigma_X^2 \log n}{n} \right)^{\frac{s_fs_g}{2s_g + d_Z}}
        \Bigg\}.
    \end{align}
    \label{thm:active_upper_bound}
\end{theorem}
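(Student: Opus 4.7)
The plan is to decompose the estimation error into a bias contribution controlled by how far the $k$ nearest neighbors of $x_0$ fall from $x_0$, and a variance contribution from the response noise $\sigma_Y\epsilon_Y$. Conditional on the covariate sample locations, the $Y$-values averaged in Algorithm~\ref{alg:active_learning_algorithm} are independent sub-Gaussian with scale $\sigma_Y$, so sub-Gaussian concentration gives a variance term of order $\sigma_Y\sqrt{\log(1/\delta)/k}$, while the bias is at most $L_f r_k^{s_f}$, where $r_k := \|X_{(k)} - x_0\|$ is the $k$-th nearest-neighbor distance. The whole proof thus reduces to showing that $r_k \lesssim r_{i^*} + \sigma_X(k/n)^{1/d_X}$ with high probability, where $r_{i^*} := \|g(Z_{i^*}) - x_0\|$ quantifies how well the exploration stage identifies a good control value.

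To bound $r_{i^*}$, I would write $U_i := \|g(Z_i) - x_0\|$ and $V_i := \sigma_X\|\bar\epsilon_{X,i}\|$, so that $\bigl|\|\bar X_i - x_0\| - U_i\bigr| \leq V_i$ and hence $U_{i^*} \leq \min_{i \in [M]} U_i + 2\max_{i \in [M]} V_i$, where $M = \lfloor m/\ell\rfloor$. The local-dimension Assumption A2) on $P_Z$, combined with the elementary bound $\Pr[\text{no } Z_i \text{ within } r \text{ of } z_0] \leq e^{-cMr^{d_Z}}$ and H\"older smoothness of $g$, yields $\min_i U_i \lesssim L_g(\log(1/\delta)/M)^{s_g/d_Z}$ with high probability. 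The sub-Gaussianity Assumption A5), applied to the averaged noise $\bar\epsilon_{X,i}$ (variance proxy $1/\ell$) with a union bound over $M$ indices, yields $\max_i V_i \lesssim \sigma_X\sqrt{\log(M/\delta)/\ell}$. Combining gives $r_{i^*} \lesssim (\ell\log(1/\delta)/n)^{s_g/d_Z} + \sigma_X\sqrt{\log(n/(\ell\delta))/\ell}$.

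For the exploitation stage, conditional on $Z_{i^*}$, the $n - m \geq n/2$ exploitation samples are IID with $X_{m+i} - g(Z_{i^*}) = \sigma_X\epsilon_{X,m+i}$. Assumption A3) on the local dimension of $\epsilon_X$ around $0$, combined with a Chernoff bound on the count of exploitation samples falling in the ball of radius $R = r_{i^*} + \sigma_X(k\log(1/\delta)/n)^{1/d_X}$ around $x_0$ (which contains a translate of the origin-centered ball of radius $R - r_{i^*}$), shows that with high probability at least $k$ of these samples lie within distance $R$ of $x_0$, yielding $r_k \lesssim r_{i^*} + \sigma_X(k\log(1/\delta)/n)^{1/d_X}$. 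Assembling this with the bias-variance decomposition from the first paragraph produces an overall error bound (up to polylogarithmic factors) of the form
\[
    \bigl(\ell/n\bigr)^{s_fs_g/d_Z} + \sigma_X^{s_f}\ell^{-s_f/2} + \sigma_X^{s_f}(k/n)^{s_f/d_X} + \sigma_Y/\sqrt{k}.
\]
Jointly optimizing over $\ell$ (balancing the first two terms, which captures the exploration trade-off between querying many distinct $Z_i$ and averaging to denoise each) and over $k$ (balancing the last two terms by the standard $k$-NN bias-variance trade-off) yields exactly the maximum of five terms defining $\Phi_A^*$: the first and fifth arise from the two regimes of $r_{i^*}^{s_f}$ (boundary $\ell = 1$ versus interior optimum), and the second, third, and fourth correspond to the exploitation trade-off at $k = 1$ (no response noise), $k \asymp n$ (no covariate noise), and interior $k$, respectively. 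The main obstacle will be the careful case analysis to verify that the joint optimization over all regimes of $\sigma_X$ and $\sigma_Y$ produces precisely these five terms, tracking polylogarithmic factors consistently through each concentration argument, and confirming that for the prescribed $k$ the nearest neighbors indeed come (with high probability) from the exploitation pool rather than the exploration samples.
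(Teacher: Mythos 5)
Your proposal is correct and follows essentially the same route as the paper's proof: the same bias--variance decomposition with sub-Gaussian concentration for the response noise, the same exploration-stage bound $\|g(Z_{i^*})-x_0\|\lesssim(\ell/n)^{s_g/d_Z}+\sigma_X\sqrt{\log(n/\delta)/\ell}$ via the local dimension of $P_Z$, H\"older smoothness of $g$, and a maximal inequality for the averaged covariate noise, the same exploitation-stage $k$-NN distance bound $\sigma_X(k/n)^{1/d_X}$ from the local dimension of $\epsilon_X$, and the same final optimization of $k$ and $\ell$ yielding the five terms of $\Phi_A^*$. The one worry you flag at the end is a non-issue: one never needs the $k$ nearest neighbors to come from the exploitation pool, since the $k$-NN distance over the full sample is at most the $k$-NN distance within the exploitation subsample, and any closer exploration points only tighten the bias bound.
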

Meanwhile, we have the following minimax lower bound, proven in Appendix~\ref{app:active_lower_bounds}, on the error of any active learner:
\begin{theorem}[Lower Bounds for the Active Case]
    There exist constants $c_1, c_2 > 0$, depending only on $L_g$, $s_g$, $L_f$, $s_f$, $d_X$, and $d_Z$, such that, for any active estimator $\hat f_{x_0}$
    \vspace{-2mm}
    \[\sup_{g,f,P_Z,\epsilon_X,\epsilon_Y}
            \Pr_{\{(Z_i,X_i,Y_i)\}_{i = 1}^n} \left[ \left| \hat f_{x_0} - f(x_0) \right| \geq c_1 \Phi_{A,*} \right]
        \geq c_2,
    \vspace{-2mm}\]
    where the supremum is over all $g$, $f$, $P_Z$, $\epsilon_X$, and $\epsilon_Y$ satisfying Assumptions A1)-A5), and
    \begin{align}
        \Phi_{A,*}
        = \max \Bigg\{
            \notag
            & \left( \frac{1}{n} \right)^{\frac{s_fs_g}{d_Z}},
            \left( \frac{\sigma_X^{d_X}}{n} \right)^{s_f/d_X}, \\
            & \frac{\sigma_Y}{\sqrt{n}},
            \left( \frac{\sigma_X^{d_X} \sigma_Y^2}{n} \right)^{\frac{s_f}{2s_f + d_X}}
        \Bigg\}.
        \label{rate:active_lower_bound}
    \end{align}
    \label{thm:active_lower_bound}
\end{theorem}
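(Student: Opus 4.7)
The plan is to establish each of the four terms in $\Phi_{A,*}$ as a separate lower bound, by a Le Cam-style two-hypothesis testing construction tailored to that term, and then take the max. Handling the adaptive query sequence relies on the standard chain-rule decomposition
\[\mathrm{KL}(Q_0^n \| Q_1^n) = \sum_{i=1}^n \E_{Q_0^n}\bigl[\mathrm{KL}(P_0(X_i, Y_i \mid Z_i) \| P_1(X_i, Y_i \mid Z_i))\bigr],\]
which reduces the joint KL between the data distributions under two hypotheses to a sum of per-step conditional KLs, even when $Z_i$ is chosen adaptively.

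For the three terms involving the noise scales, I would use a constant $g \equiv x_0$ and $\epsilon_X$ uniform on the unit ball in $\R^{d_X}$, so that $X_i = x_0 + \sigma_X \epsilon_{X,i}$ has a distribution independent of the learner's query and the problem reduces essentially to the passive case. (i) The parametric rate $\sigma_Y/\sqrt{n}$ follows from constant hypotheses $f_0 \equiv 0$, $f_1 \equiv \delta$ with Gaussian $\epsilon_Y$. (ii) The rate $(\sigma_X^{d_X} \sigma_Y^2/n)^{s_f/(2s_f+d_X)}$ follows by taking $f_1$ a smooth bump at $x_0$ of support radius $r$ and height $\delta \asymp L_f r^{s_f}$, and balancing the per-step KL $\asymp (r/\sigma_X)^{d_X} \delta^2/\sigma_Y^2$ against $1/n$. (iii) The noiseless-response rate $(\sigma_X^{d_X}/n)^{s_f/d_X}$ follows with $\sigma_Y = 0$ from the elementary TV bound that the data under $f_0, f_1$ differs only when some $X_i$ lands in $B(x_0, r)$, an event of probability $\leq n(r/\sigma_X)^{d_X}$.

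The last term, $(1/n)^{s_f s_g/d_Z}$, reflects the cost of inverting an unknown Hölder $g$ to locate $z_0 = g^{-1}(x_0)$, and is the most delicate case. I would take $\sigma_X = \sigma_Y = 0$, $P_Z$ uniform on $[-1,1]^{d_Z}$, place $M = 2n$ grid points $z^{(1)},\dots,z^{(M)}$ with spacing $\asymp n^{-1/d_Z}$, set $r \asymp n^{-1/d_Z}$, and for $j \in [M]$, $b \in \{0,1\}$ define $H_j^b$ by
\[g_j(z) = x_0 + L_g r^{s_g}\bigl[1 - \phi((z - z^{(j)})/r)\bigr], \quad f_j^b(x) = b\delta\, \psi\bigl((x-x_0)/(L_g r^{s_g})\bigr),\]
with $\phi, \psi$ smooth unit-ball bumps equal to $1$ at the origin and vanishing on the boundary, and $\delta \asymp r^{s_f s_g}$. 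Then $g_j(z^{(j)}) = x_0$, $f_j^b(x_0) = b\delta$, and outside $B(z^{(j)}, r)$ the observations collapse to the common baseline $(X,Y) = (x_0 + L_g r^{s_g}, 0)$; hence on the event $E_j$ that the $n$ queries miss $B(z^{(j)}, r)$, the estimator output cannot depend on $b$ and must incur error $\geq \delta/2$ under one of $H_j^0, H_j^1$. A pigeonhole over the $M$ disjoint balls shows that for any (possibly randomized) fixed query sequence, at least $M - n = n$ of the $E_j$'s occur, yielding some $j^*$ with $\Pr[E_{j^*}] \geq 1/2$ and hence the lower bound $\delta \asymp n^{-s_f s_g/d_Z}$ with constant probability.

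The hard part will be term 1: in the noiseless setting the KL chain rule is useless (divergences are infinite when the hypotheses have different supports), so I would replace it with the combinatorial pigeonhole above, together with the key observation that, until a ``needle'' is hit, the adaptive strategy collapses to a deterministic baseline sequence. The places requiring careful bookkeeping are engineering the bumps to vanish on the boundary of their supports so that $f_j^b$ evaluated on the baseline $X = x_0 + L_g r^{s_g}$ gives exactly $Y = 0$, and uniformly verifying the local Hölder seminorms of $g_j$ and $f_j^b$ across all constructed hypotheses.
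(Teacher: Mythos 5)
Your proposal follows essentially the same route as the paper's proof: the four terms are established separately, using the chain-rule KL decomposition plus Pinsker for the two $\sigma_Y$-dependent terms (with adaptivity handled because the per-step probability that $X_i$ lands within $h$ of $x_0$ is bounded uniformly over the learner's queries), an event-based argument (``no $X_i$ hits $B(x_0,r)$'') for the $(\sigma_X^{d_X}/n)^{s_f/d_X}$ term, and the disjoint-bump pigeonhole construction over $\asymp n$ candidate locations of $z_0$ for the $n^{-s_f s_g/d_Z}$ term, exactly as in the paper's Lemma on the general stochastic lower bound and its Propositions for the active case. The one phrasing to fix is ``take $\sigma_X=\sigma_Y=0$'': these scales are fixed problem parameters rather than adversarial choices, but your arguments survive unchanged for arbitrary scales --- the TV argument for term (iii) is unaffected by response noise, and for the needle construction one can choose $\epsilon_X$ (within A3 and A5, e.g.\ degenerate or supported in the half-space pointing away from $x_0$) so that covariate noise cannot push $X$ into the support of the $f$-bump --- which is how the paper's corresponding propositions handle it.
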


Comparing the upper bound $\Phi_A^*$ (Eq.~\eqref{rate:active_upper_bound}) and lower bound $\Phi_{A,*}$ (Eq.~\eqref{rate:active_lower_bound}), we can write
\[\Phi_A^* = \max \left \{
    \Phi_A^*,
    \left( \frac{\sigma_X^2 \log n}{n} \right)^{\frac{s_fs_g}{2s_g + d_Z}}
\right\};\]
i.e., the first four terms in $\Phi_A^*$ match the corresponding terms in $\Phi_{A,*}$, while the fifth term in $\Phi_A^*$ may be sub-optimal. Hence, Algorithm~\ref{alg:active_learning_algorithm} is optimal whenever $\sigma_Y$ is sufficiently large or both $d_X$ and $\sigma_X$ are sufficiently small; specifically, for $\sigma_Y$ small, the fifth term $\left( \frac{\sigma_X^2 \log n}{n} \right)^{\frac{s_fs_g}{2s_g + d_Z}}$ dominates $\Phi_A^*$ only when
$1
    \lesssim_{\log} \sigma_X n^{\frac{s_g}{d_Z}}
    \lesssim_{\log} n^{\frac{2s_g + d_Z}{d_X d_Z}}$.



\section{DISCUSSION}
\label{sec:discussion}

\paragraph{Comparison of Active and Passive Bounds}
The first terms of the passive minimax rate $\Phi_P$ (Eq.~\eqref{ineq:passive_upper_bound}) and our upper bound rate $\Phi_A^*$ (Eq.~\eqref{rate:active_upper_bound}) match, while, $\Phi_A^*$ strictly improves on the second, third, and fourth terms of $\Phi_P$ by removing $d_Z/s_g$ from the exponents. Meanwhile, the last term of $\Phi_A^*$ dominates $\Phi_P$ only if $\sigma_Y$ is sufficiently small and both $d_X \leq 2$ and $\sigma_X \gtrsim_{\log} n^{-d_Z/s_g}$. In particular, whenever $\sigma_Y$ is sufficiently large or both $d_X > 2$ and $\sigma_X \gtrsim_{\log} n^{-d_Z/s_g}$, (i.e., whenever the response or covariate noise is sufficiently large), Algorithm~\ref{alg:active_learning_algorithm} strictly outperforms the best passive algorithm.
Intuitively, the advantage of active learning comes from the ability to learn the structure of $g$ and thereby remove the degrees of freedom in $X$ that come from variation in $Z$.

\paragraph{Two-Stage vs. Fully Active Algorithms}
Our proposed two-stage Algorithm~\ref{alg:active_learning_algorithm} utilizes active learning in only a very limited way, as prior information from preceding iterations is used only once, in between exploration and exploitation stages, rather than in each iteration.
Practically, this demonstrates that running a pilot experiment to optimize free parameters of a main experiment can provide a dramatic improvement in the utility of data from the main experiment. That is, Algorithm~\ref{alg:active_learning_algorithm} may be useful not only in the specific setting of Active Learning, but also in the more general setting of Experimental Design, where one wishes to design an optimal experiment before collecting (most of) the data. Moreover, when possible, data collection within each of the two stages can be trivially parallelized. On the other hand, sequentially optimizing these parameters separately before collecting each sample may be prohibitively complex, laborious, or time-consuming.

One may speculate that this simplicity causes the gap between our upper and lower bounds for the active case (Theorems~\ref{thm:active_upper_bound} and \ref{thm:active_lower_bound}), and that a more sophisticated algorithm might iteratively utilize all of the past information to inform each future decision.
On the other hand, the evaluation function, namely the error of the final estimate, is more comparable to the notion of Simple Regret, rather than Cumulative Regret, used to evaluate many active algorithms~\citep{slivkins2019introduction}. For example, in our problem, an algorithm can afford to perform $\Omega(n)$ steps of pure exploration (as in Algorithm~\ref{alg:active_learning_algorithm}), whereas any such algorithm necessarily exhibits linear cumulative regret.
Since, in many problems (e.g., multi-armed bandits), simple two-stage algorithms can be optimal under Simple Regret, it remains unclear whether the gap between Theorems~\ref{thm:active_upper_bound} and \ref{thm:active_lower_bound} is inherent to two-stage algorithms such as Algorithm~\ref{alg:active_learning_algorithm} or is an artifact of our (upper or lower) bounds.

\section{EXPERIMENTAL RESULTS}
\label{sec:experimental_results}

In this section, we present the results of two numerical experiments designed to numerically validate our theoretical findings. The main goal was to investigate whether the theoretical performance advantages of the two-stage algorithm (over the Passive algorithm) predicted by our theoretical results translate into apparent practical advantages, and to shed light on how these performance advantages scale with various parameters of the data-generating process. Python code and instructions for replicating our results are available at \url{https://gitlab.tuebingen.mpg.de/shashank/indirect-active-learning}.

\subsection{Experiment 1: Synthetic Data}
\label{subsec:synthetic_data}

We first consider a setting with simple synthetic data where we can directly manipulate features of the data-generating process, such as the dimensions $d_Z$ of $P_Z$ and $d_X$ of $\epsilon_X$, the smoothness $s_g$ of $g$, and the noise levels $\sigma_X$ and $\sigma_Y$.

\paragraph{Algorithms}

As our Passive algorithm, we utilized standard $k$-NN regression, as in Theorem~\ref{thm:passive_upper_bound}. As our Active algorithm, we used the two-stage algorithm presented in Algorithm~\ref{alg:active_learning_algorithm}.
For comparison, we also evaluated an ``Oracle'' estimator that always samples $(X, Y, Z)$ conditional on $Z = z_0$ and then runs a $k$-NN regression of $Y$ over $X$. Since this relies on knowledge of $z_0$, this is not feasible in practice, but comparing the performance of the Active estimator to this ideal estimator will allow us to discern how much of the Active estimator's error comes from learning $f$ versus learning $g$.

Although Theorems~\ref{thm:passive_upper_bound} and \ref{thm:active_upper_bound} specify optimal values of the hyperparmeters $k$ and $\ell$ of the active and passive estimators, respectively, these optimal values depend on parameters, such as the local smoothness of $g$ and $f$ around $z_0$ and $x_0$, that are typically unknown in practice. We therefore considered two versions of each algorithm: a version utilizing the theoretically optimal values of each parameter (according to Eqs.~\eqref{eq:passive_k}, \eqref{eq:active_k}, and \eqref{eq:active_ell}) and a fully data-dependent version that used $4$-fold cross-validation to select each hyperparameter.

Hence, in total, we compared $6$ algorithms, namely Passive, Active, and Oracle algorithms with theoretically optimal hyperparameter values, as well as their cross-validated counterparts Passive CV, Active CV, and Oracle CV.

\paragraph{Data Generating Process} Synthetic data was generated as follows. $P_Z$ was uniform on $[0, 1]^{d_Z}$, $\epsilon_X \sim \mathcal{N}(0, I_{d_X})$, $\epsilon_Y \sim \mathcal{N}(0, 1)$, $g : \R^{d_Z} \to \R^{d_X}$ defined by
$g_1(z) = \cdots = g_{d_X}(z) = \sum_{j = 1}^{d_Z} |z_j|^{s_g}$
for all $z \in \R^{d_Z}$, and $f : \R^{d_X} \to \R$ defined by $f(x) = \|x\|_2$ for all $x \in \R^{d_X}$.
One can then check that $g \in \C^{s_g}(\Z, z_0; \Z; 1)$ and $f \in \C^1(\X, x_0; \Y; 1)$.

In the default setup, we set $n = 2^{10}$, $s_g = 1$, $d_Z = d_X = 3$, and $\sigma_X = \sigma_Y = 1$. We then varied each of these parameters to demonstrate its effect on the performance of each estimator. Specifically, we varied:
\begin{itemize}[noitemsep,topsep=0pt]
    \item $n \in \{2^4, 2^5, ..., 2^{12}\}$
    \item $s_g \in \{0.1, 0.2, ..., 1.0\}$
    \item $d_Z, d_X \in \{1, 2, ..., 10\}$
    \item $\sigma_X, \sigma_Y \in \{10^{-1}, 10^{-0.8}, 10^{-0.6}, ..., 10^{0.8}, 10^1\}$
\end{itemize}

\paragraph{Results}
Each experiment was replicated independently $2^{10}$ times. Figure~\ref{fig:experimental_results} displays means and standard errors (across IID replicates) of the mean squared error of each algorithm.
\begin{figure*}[htbp]
    \centering
    \includegraphics[width=0.95\linewidth,clip,trim={0.8cm 0.2cm 0.8cm 1.1cm}]{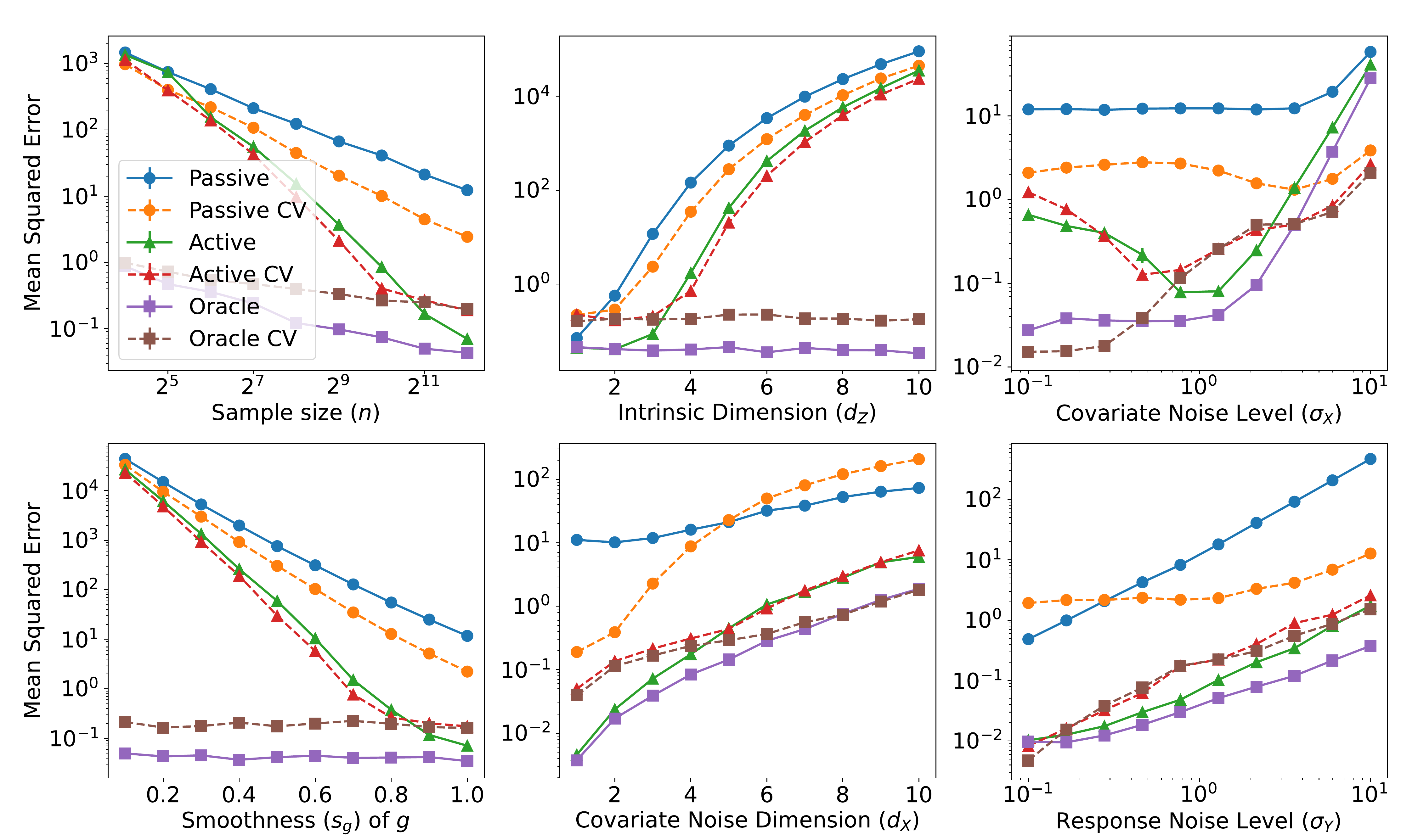}
    \caption{Experiment 1: Mean (over $2^{10}$ IID replicates) squared error of each estimator, over parameters of the data-generating process. Error bars denoting standard errors over replicates are too small to be visible. Note logarithmic $y$-axes.}
    \label{fig:experimental_results}
\end{figure*}
Unsurprisingly, Passive estimators consistently exhibit the largest error, with Passive CV typically outperforming Passive, and the Oracle estimators consistently exhibit the smallest error, with Oracle typically slightly outperforming Oracle CV. Meanwhile, performance of the Active estimators lies between those of the corresponding Passive and Oracle estimators.

Whether Active estimators perform more like corresponding Passive estimators or corresponding Oracle estimators depends primarily on how easy it is to learn the function $g$ and thereby select values of $Z \approx z_0$. Hence, the advantage of Active estimators over their Passive counterparts increases with sample size $n$ and smoothness $s_g$, decreases with intrinsic dimension $d_Z$ and covariate noise level $\sigma_X$, and is essentially unaffected by covariate noise dimension $d_X$ and response noise level $\sigma_Y$.

The performances of the Active and Active CV estimators are generally quite similar. This is encouraging, as Active CV, though computationally more demanding, is more practical in most real applications where the smoothness levels of $g$ and $f$ are unknown. Since the Active estimator is minimax optimal in most settings, we may similarly expect Active CV to be minimax optimal in most settings.






\subsection{Experiment 2: Epidemiological Forecasting}
\label{subsec:SIRD}

We now consider a more realistic data-generating process based on
a stochastic SIRD model, a system of discrete-time stochastic differential equations (SDEs) that forms the basis for many models of infectious disease spread used in epidemiology~\citep{bailey1975mathematical,arik2020interpretable}. Given a fixed population of individuals the SIRD model tracks the number of individuals in each of four states: individuals (S)usceptible to contracting the disease, (I)nfected individuals who carry the disease and may infect others, (R)ecovered individuals who are assumed to be immune to the disease, and (D)eceased individuals who died from the disease. Given SIRD counts at a point in time, a system of SDEs (given in Appendix~\ref{app:SIRD}) determines how SIRD compartment counts change at the next time step, allowing one to simulate SIRD compartment counts over time.

\paragraph{Problem Setup}
We consider the common epidemiological challenge of predicting the outcome of an epidemic disease outbreak, using only data from the beginning of the outbreak~\citep{brooks2015flexible,desai2019real,arik2020interpretable}. Specifically, in a population of $N = 1000$ individuals, given SIRD counts from the first $20$ of a simulation ($X = (S_{1:20}, I_{1:20}, R_{1:20}, D_{1:20}) \in \mathbb{N}^{80}$), we aim to predict the total number $Y = D_{100} \in \mathbb{N}$ of deaths by day $T = 100$.

Since key parameters $Z = (\beta, \gamma, \delta) \in [0, 1]^3$ determining the rates at which individuals transition between SIRD compartments are unknown, one cannot simply propagate SDEs starting from the given measurement data. One can, however, use this simulation to generate a dataset of $(X, Y, Z)$ triples for various values of $Z$ and then train a machine learning model to predict $Y$ from $X$. This can be compared to the approach of \citet{brooks2015flexible}, who forecast current seasonal influenza epidemics based on outcomes from historical epidemic seasons exhibiting greatest similarity to data collected so far in the current season. Since we have access to a simulation, we can try to improve forecasts by selecting values of $Z$ intelligently, rather than completely randomly. This is precisely what Indirect Active Learning attempts to do (in a similar spirit as Approximate Bayesian Computation).





\paragraph{Results}
We evaluated the performance of both our two-stage Active learning Algorithm~\ref{alg:active_learning_algorithm} and the Passive $k$-NN regressor on this task, for various sample sizes (i.e., simulation budgets) $n$.
Since we do not know the true smoothness levels $s_g$ and $s_f$ in this scenario, we selected estimator hyperparameters using $4$-fold cross-validation, as in the previous experiment.
\begin{figure}
    \centering
    \includegraphics[width=0.95\linewidth,clip,trim={0cm 0cm 3cm 2cm}]{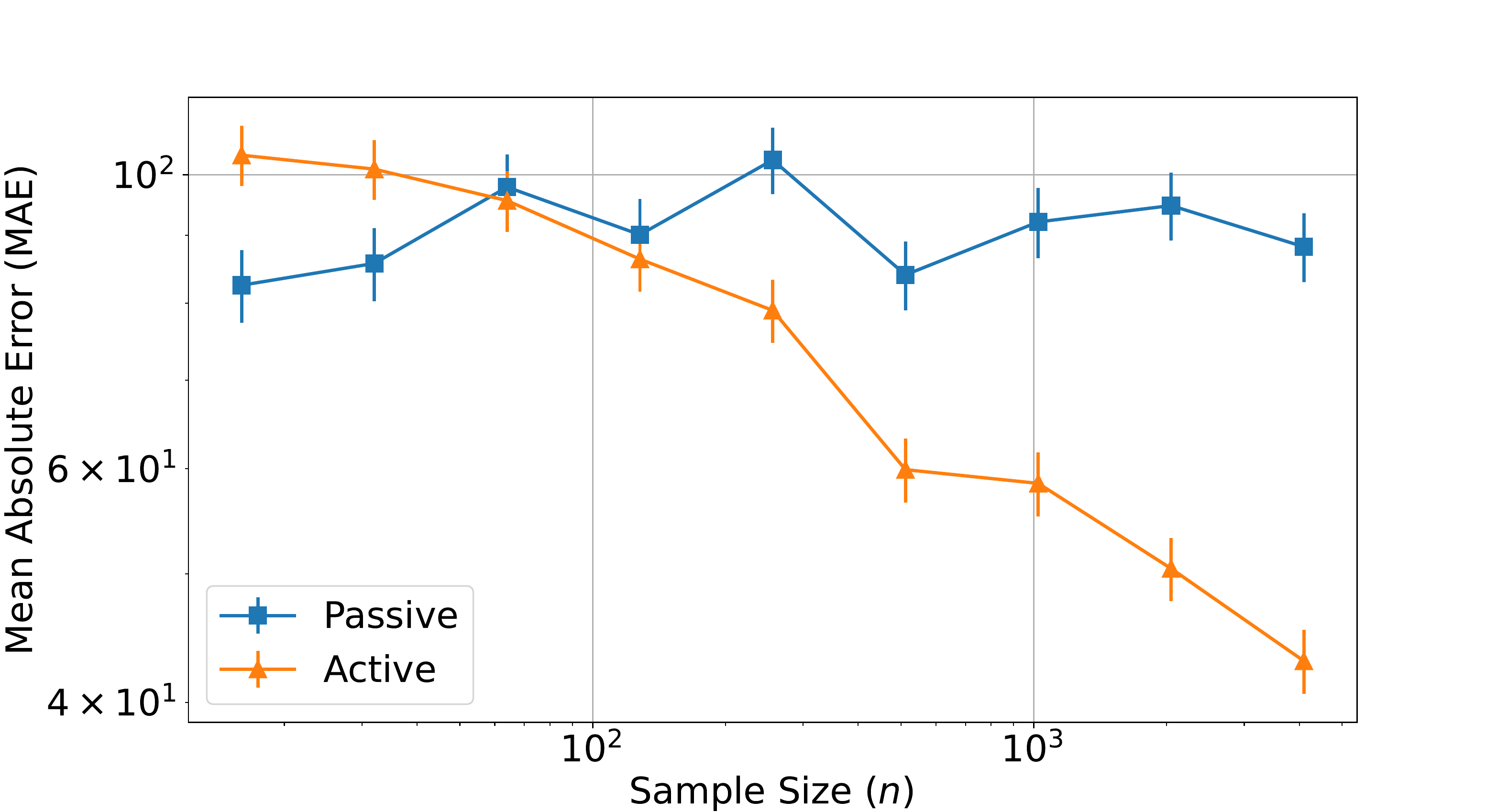}
    \caption{Experiment 2: Mean absolute error of Active and Passive estimates of $D_{100}$, as a function of sample size $n$.}
    \label{fig:SIR_performance}
\end{figure}
As shown in Figure~\ref{fig:SIR_performance}, as sample size $n$ increases from $2^4$ to $2^{14}$, mean absolute error of the Active estimate decreases. Meanwhile, the error of the Passive estimator fails to improve perceptibly.
Figure~\ref{fig:infected_plots} visualizes why the Active algorithm outperforms the Passive algorithm, namely that it preferentially samples simulated scenarios that are similar to, and therefore useful for predicting, the test scenario.
\begin{figure}
    \centering
    \includegraphics[width=0.95\linewidth]{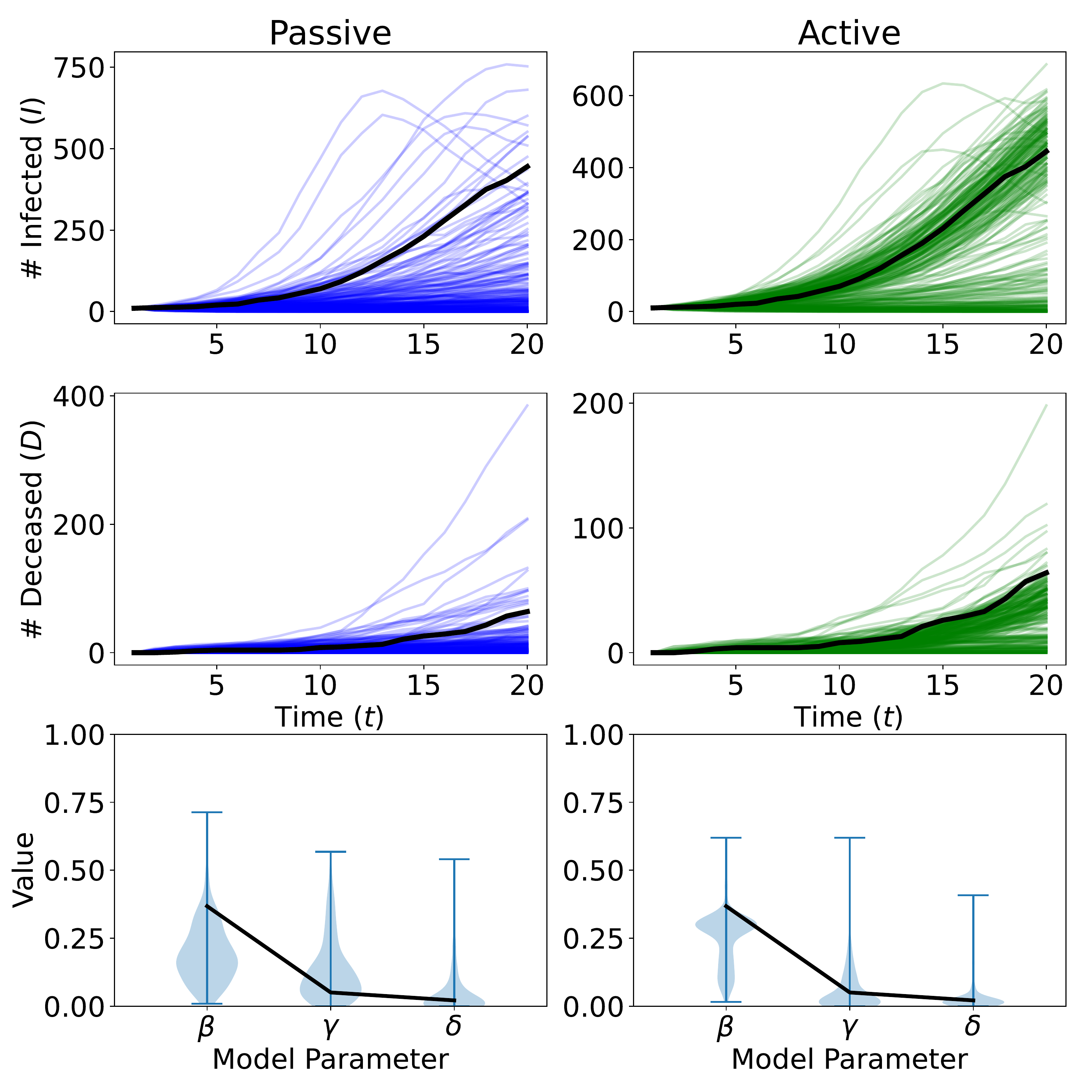}
    \caption{Experiment 2: Example distributions of Infected ($I_{1:20}$) and Deceased ($D_{1:20}$) trajectories over the first $20$ days of the outbreak, as well as underlying $Z = (\beta, \gamma, \delta)$ parameters, sampled according to the Passive and Active algorithms. Black lines indicate the test scenario $(x_0, z_0)$ being queried.}
    \label{fig:infected_plots}
\end{figure}

\section{CONCLUSIONS \& FUTURE WORK}
\label{sec:conclusions}

This paper studied the nonparametric minimax theory of Indirect Active Learning and a corresponding passive baseline. We showed that active learning asymptotically outperforms passive learning in many cases, and that most of this improvement is realized by a two-stage algorithm using only a very limited form of active learning, which may be easier to implement than fully active algorithms. Finally, we illustrated how how behaviors of these algorithms change with parameters of a synthetic data-generating process and demonstrated benefits of two-stage active learning in an epidemiological forecasting application.

Besides closing the gap between our active upper and lower bounds, several interesting directions remain for future work. First, we focused here on a local nonparametric problem of estimating $f(x_0)$ at a single point $x_0$. In many cases, it may be more useful or tractable to estimate a global parametric relationship, such as a linear relationship $f(x) = \beta^\intercal x$. One could additionally assume that $g$ is linear or that either or both of $f$ and $g$ depend sparsely on their inputs, allowing for results that scale better with dimensions $d_Z$ and $d_X$ than our nonparametric results.

Second, as discussed in Section~\ref{sec:related_work}, our model of indirect learning assumes the relationship between $X$ and $Y$ is unconfounded given $Z$, while introducing confounding leads to the instrumental variable setting.
When $Z$ is actively manipulated, the relationship between $Z$ and $Y$ is automatically likely to be unconfounded, satisfying a key assumptions of instrumental variable methods.
Furthermore, we conjecture that active learning approaches such as ours could help address the \emph{problem of weak instruments}~\citep{stock2002survey,murray2006avoiding,olea2013robust} by selecting values of $Z$ that induce maximal variation in $X$.


\subsubsection*{Acknowledgements}

The author would like to thank Jia-Jie Zhu, Krikamol Maundet, and Bernhard Sch\"olkopf for helpful discussions. This work was supported by the German Federal Ministry of Education and Research (BMBF) through the T\"ubingen AI Center (FKZ: 01IS18039B).

\bibliographystyle{plainnat}
\bibliography{biblio}

\onecolumn
\appendix

\section{Upper Bound for the Passive Case (Theorem~\ref{thm:passive_upper_bound})}
\label{app:passive_upper_bound}

In this Appendix, we prove our main upper bound for the passive case. We begin by reiterating the result:

\begin{customthm}{\ref{thm:passive_upper_bound}}[Main Passive Upper Bound]
    Suppose that $Z$ has dimension $d_Z$ around some $z_0 \in \Z$ with $g(z_0) = x_0$. Suppose $\epsilon_X$ is sub-Gaussian and has dimension $d_X$ around $0$.
    Suppose that the response noise $\epsilon_Y$ is sub-Gaussian.
    Suppose that $g \in \C^{s_g}(\Z, \X)$ and $f \in \C^{s_f}(\X, \Y)$. Then, there exists a constant $C > 0$ such that, for any $\delta \in (0, 1)$, with probability at least $1 - \delta/2 - 2e^{-k/4}$,
    \[\left| \hat f_{x_0,k} - f(x_0) \right|
      \leq C \left( \sigma_Y \sqrt{\frac{\log \sfrac{1}{\delta}}{k}}
      + \max \left\{ \left( \left( \frac{k}{n} \right)^{\frac{s_g}{d_Z}} \sqrt{\log \frac{n}{\delta}} \right)^{s_f}, \left( \frac{k \sigma_X^{d_X}}{n} \right)^{\frac{s_f}{d_X + d_Z/s_g}} \right\} \right).\]
    In particular, setting
    \begin{equation}
        k = \max \left\{
            4 \log \frac{4}{\delta},
            \min \left\{
                \sigma_Y^{\frac{2(d_X + d_Z/s_g)}{2s_f + d_X + d_Z/s_g}} \left( \frac{n}{\sigma_X^{d_X}} \right)^{\frac{2s_f}{2s_f + d_X + d_Z/s_g}},
                \sigma_Y^{\frac{2d_Z}{2s_fs_g + d_Z}} n^{\frac{2s_fs_g}{2s_fs_g + d_Z}}
            \right\}
        \right\},
        \label{eq:passive_k}
    \end{equation}
    gives, with probability $\geq 1 - \delta$,
    $\displaystyle\left| \hat f_{x_0,k} - f(x_0) \right|
        \lesssim \Phi_P \log \frac{1}{\delta}$,
    where
    \[\Phi_P = \max \left\{
        \left( \frac{1}{n} \right)^{\frac{s_fs_g}{d_Z}},
        \left( \frac{\sigma_X^{d_X}}{n} \right)^{\frac{s_f}{d_X + d_Z/s_g}},
        \left( \frac{\sigma_Y^2}{n} \right)^{\frac{s_f}{2s_f + d_Z/s_g}},
        \left( \frac{\sigma_X^{d_X} \sigma_Y^2}{n} \right)^{\frac{s_f}{2s_f + d_X + d_Z/s_g}}
    \right\}.\]
    \label{thm:passive_upper_bound_app}
\end{customthm}

Before proving this theorem, we state and prove a few useful lemmas. Our first lemma shows that, under the Local Dimension assumption (Definition~\ref{def:local_assumption}), the distribution of $k$-NN distances is tightly concentrated. Such results are commonly used in the analysis of $k$-NN methods~\citep{kpotufe2011k,chaudhuri2014rates,biau2015lectures,singh2016finite,singh2021statistical}:

\begin{lemma}[Convergence of $k$-NN Distance]
    Let $X$ be a random variable taking values in a metric space $(\X, \rho)$, and let $X_1,...,X_n$ be IID observations of $X$. Suppose $X$ has local dimension $d$ around a point $x \in \X$. Then there exist constants $C, \kappa > 0$ such that, whenever $k/n \leq \kappa$, with probability at least $1 - e^{-k/4}$, the distance
    $\rho(x, X_{\pi_k(x; \{X_1,...,X_n\})})$ from $x$ to its $k$-nearest neighbor in $\{X_1,...,X_n\}$ is at most
    \[\rho(x, X_{\pi_k(x; \{X_1,...,X_n\})})
      \leq C \left( \frac{k}{n} \right)^{1/d}.\]
    \label{lemma:knn_distance}
\end{lemma}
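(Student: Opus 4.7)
The plan is to turn the local dimension hypothesis into a concrete lower bound on the small-ball probability, apply it at a radius of order $(k/n)^{1/d}$, and then show via a binomial concentration argument that at least $k$ sample points land inside that ball with the required probability.

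First, I would unpack Definition~\ref{def:local_assumption}. Since
\[\liminf_{r \downarrow 0} \frac{\Pr[\rho(X,x) \leq r]}{r^d} > 0,\]
there exist constants $c_0 > 0$ and $r_0 > 0$ such that $\Pr[\rho(X,x) \leq r] \geq c_0 r^d$ for every $r \in (0, r_0]$. This converts the asymptotic condition into a usable pointwise inequality at any sufficiently small scale.

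Second, I would fix a constant $C > 0$ large enough that $c_0 C^d \geq 2$, and define the radius $r_{n,k} := C (k/n)^{1/d}$. Choosing $\kappa := \min\{1, r_0^d/C^d\}$, the hypothesis $k/n \leq \kappa$ ensures $r_{n,k} \leq r_0$, so that the small-ball bound gives
\[p_{n,k} := \Pr[\rho(X,x) \leq r_{n,k}] \geq c_0 C^d \cdot \frac{k}{n} \geq \frac{2k}{n}.\]
Let $N_{n,k} := |\{i \in [n] : \rho(X_i, x) \leq r_{n,k}\}|$. By independence of the samples, $N_{n,k} \sim \mathrm{Binomial}(n, p_{n,k})$, and in particular $\mathbb{E}[N_{n,k}] = n p_{n,k} \geq 2k$. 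Observe that the event $\{N_{n,k} \geq k\}$ implies the event $\{\rho(x, X_{\pi_k(x;\{X_1,\dots,X_n\})}) \leq r_{n,k}\}$, since once at least $k$ sample points lie within distance $r_{n,k}$ of $x$, so does the $k$-th nearest.

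Third, I would apply the multiplicative Chernoff bound for binomials, which gives $\Pr[N_{n,k} \leq \mathbb{E}[N_{n,k}]/2] \leq \exp(-\mathbb{E}[N_{n,k}]/8)$. Plugging in $\mathbb{E}[N_{n,k}] \geq 2k$ yields
\[\Pr\bigl[N_{n,k} < k\bigr] \leq \Pr\bigl[N_{n,k} \leq \mathbb{E}[N_{n,k}]/2\bigr] \leq e^{-k/4},\]
and combining with the implication above completes the proof.

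The argument is essentially routine, so I do not anticipate a serious obstacle; the only subtle point is the bookkeeping of constants, namely picking $C$ and $\kappa$ in the correct order so that $r_{n,k}$ is both small enough to invoke the local small-ball bound and large enough to guarantee $n p_{n,k} \geq 2k$. Neither requires delicate arguments beyond the $\liminf$ definition and a multiplicative Chernoff inequality.
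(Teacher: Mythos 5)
Your proposal is correct and follows essentially the same route as the paper's proof: extract a small-ball lower bound $\Pr[\rho(X,x)\leq r]\geq c r^d$ from the $\liminf$ definition, choose $r \asymp (k/n)^{1/d}$ so that this probability is at least $2k/n$ (with $\kappa$ ensuring $r$ is below the validity threshold), and apply the multiplicative Chernoff lower-tail bound to the binomial count of points within radius $r$ to conclude that at least $k$ samples fall in the ball with probability $1-e^{-k/4}$. The only differences are cosmetic bookkeeping of the constants $C$ and $\kappa$.
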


\begin{proof}
    By definition of local dimension, there exist $c, r^* > 0$ such that, for all $r \in (0, r^*]$, $\prob[\rho(X, x) \leq r] \geq c r^d$. Suppose $\frac{k}{n} \leq \frac{c (r^*)^d}{2}$, so that, for $r := \left( \frac{2k}{cn} \right)^{1/d}$, $r \leq r^*$. Then,
    \[\prob[\rho(X, x) \leq r] \geq \frac{2k}{n}.\]
    Since, for each $i \in [n]$, $1\{\rho(X_i, x) \leq r\} \sim \operatorname{Bernoulli}(\prob[\rho(X, x) \leq r])$, by a multiplicative Chernoff bound, with probability at least $1 - e^{- k/4}$,
    \[\sum_{i = 1}^n 1\{\rho(X_i, x) \leq r\} \geq k,\]
    which implies $\rho(x, X_{\pi_k(x; \{X_1,...,X_n\})}) \leq r$.
\end{proof}

Our next lemma shows that local dimension is (sub-)additive:

\begin{lemma}
    Consider two independent $\R^d$-valued random variables $X_1$ and $X_2$. Under any norm $\|\cdot\| : \R^d \to \R$, suppose $X_1$ has local dimension $d_1$ around $x_1$ and $X_2$ has local dimension $d_2$ around $x_2$. Then, $X_1 + X_2$ has local dimension $d_1 + d_2$ around $x_1 + x_2$.
    \label{lemma:sum_of_dimensions}
\end{lemma}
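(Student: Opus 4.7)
The plan is to combine the triangle inequality with independence to reduce the event on the sum to a product of events on the summands. Specifically, for any $r > 0$, if $\|X_1 - x_1\| \leq r/2$ and $\|X_2 - x_2\| \leq r/2$, then the triangle inequality gives $\|(X_1 + X_2) - (x_1 + x_2)\| \leq r$. Therefore, using independence,
\[
\prob\bigl[\|(X_1+X_2) - (x_1+x_2)\| \leq r\bigr]
\geq \prob\bigl[\|X_1 - x_1\| \leq r/2\bigr] \cdot \prob\bigl[\|X_2 - x_2\| \leq r/2\bigr].
\]

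Dividing both sides by $r^{d_1 + d_2}$ and rewriting the right-hand side as $2^{-(d_1+d_2)}$ times the product of $\prob[\|X_i - x_i\| \leq r/2]/(r/2)^{d_i}$ for $i = 1, 2$, I can take $\liminf$ as $r \downarrow 0$. Since the $\liminf$ of a product of nonnegative sequences is at least the product of their $\liminf$s whenever both are finite and positive, and the hypotheses guarantee that each factor has a strictly positive $\liminf$, we conclude
\[
\liminf_{r \downarrow 0} \frac{\prob[\|(X_1+X_2) - (x_1+x_2)\| \leq r]}{r^{d_1+d_2}} > 0,
\]
which by Definition~\ref{def:local_assumption} means $X_1 + X_2$ has local dimension at least $d_1 + d_2$ around $x_1 + x_2$, matching the claim (note that Definition~\ref{def:local_assumption} only imposes a lower bound on the probability, so ``local dimension $d$'' really means ``local dimension at least $d$,'' and sub-additivity is in fact what is needed downstream).

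There is no serious obstacle here; the only mild point to mention is that the use of $\liminf$ of a product requires both factors to have positive $\liminf$, which follows directly from the hypotheses, and that the argument uses only the triangle inequality for the norm $\|\cdot\|$, not any stronger geometric property, so it applies uniformly to any of the norms appearing elsewhere in the paper.
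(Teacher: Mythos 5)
Your proof is correct and follows essentially the same route as the paper's: triangle inequality to reduce the sum event to the intersection of half-radius events, independence to factor the probability, and the definition of local dimension to conclude. The extra care you take with the $\liminf$ of the product and the observation that the definition only encodes a lower bound are sound but do not change the argument.
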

\begin{proof}[Proof of Lemma~\ref{lemma:sum_of_dimensions}]
    By the triangle inequality and independence of $X$ and $\epsilon$,
    \begin{align*}
        \Pr[\|X_1 + X_2 - (x_1 + x_2)\| \leq r]
        & \geq \Pr[\|X_1\| \leq r/2 \text{ AND } \|X_2\| \leq r/2] \\
        & = \Pr[\|X_1\| \leq r/2] \Pr[\|X_2\| \leq r/2].
    \end{align*}
    The result follows from the definition of local dimension.
\end{proof}

Finally, the following lemma shows how local dimension of a random variable changes when applying a H\"older-smooth function:

\begin{lemma}[Dimension of Pushforward]
    Let $(\X, \rho_\X)$ and $(\Y, \rho_\Y)$ be metric spaces. Let $X$ be a random variable taking values in $\X$, and let $f \in \C^s(\X, x; \Y)$ be a map from $\X$ to $\Y$ that is locally H\"older continuous around a point $x \in \X$. If $X$ has local dimension $d$ around $x$ in $(\X, \rho_\X)$, then $f(X)$ has local dimension $d/s$ around $f(x)$ in $(\Y, \rho_\Y)$.
    \label{lemma:pushforward_dimension}
\end{lemma}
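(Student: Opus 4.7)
The plan is to directly unpack the two definitions: local Hölder continuity of $f$ gives an upper bound on $\rho_\Y(f(X), f(x))$ in terms of $\rho_\X(X,x)$, which translates a small-ball event for $X$ into a small-ball event for $f(X)$. This lets me transfer the lower bound on $\Pr[\rho_\X(X,x) \leq r]$ coming from the local dimension assumption directly to $f(X)$.

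Concretely, let $L := \|f\|_{\C^s(\X, x; \Y)} < \infty$ (finite since $f \in \C^s(\X, x; \Y)$). By definition of the local Hölder seminorm, for every $z \in \X$,
\[\rho_\Y(f(z), f(x)) \leq L \rho_\X^s(z, x).\]
Therefore, for any $\epsilon > 0$, the event $\{\rho_\X(X, x) \leq (\epsilon/L)^{1/s}\}$ is contained in the event $\{\rho_\Y(f(X), f(x)) \leq \epsilon\}$, so
\[\Pr[\rho_\Y(f(X), f(x)) \leq \epsilon] \geq \Pr[\rho_\X(X, x) \leq (\epsilon/L)^{1/s}].\]

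The local dimension hypothesis on $X$ supplies constants $c, r^* > 0$ with $\Pr[\rho_\X(X, x) \leq r] \geq c r^d$ for all $r \in (0, r^*]$. For $\epsilon$ small enough that $(\epsilon/L)^{1/s} \leq r^*$, substituting $r = (\epsilon/L)^{1/s}$ yields
\[\Pr[\rho_\Y(f(X), f(x)) \leq \epsilon] \geq c \bigl(\epsilon/L\bigr)^{d/s} = c L^{-d/s} \epsilon^{d/s}.\]
Dividing by $\epsilon^{d/s}$ and taking $\liminf_{\epsilon \downarrow 0}$ gives $\liminf_{\epsilon \downarrow 0} \Pr[\rho_\Y(f(X), f(x)) \leq \epsilon]/\epsilon^{d/s} \geq c L^{-d/s} > 0$, which is exactly the statement that $f(X)$ has local dimension $d/s$ around $f(x)$.

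There is no real obstacle here: the argument is a one-line change of variables in the small-ball probability, and the only subtlety is to note that $L$ is finite (so that $(\epsilon/L)^{1/s}$ is well-defined and tends to $0$ with $\epsilon$) and that the inequality only needs to hold for sufficiently small $\epsilon$, matching the $\liminf$ in the definition of local dimension.
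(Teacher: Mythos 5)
Your proposal is correct and takes essentially the same route as the paper's proof: both use the local H\"older bound $\rho_\Y(f(X), f(x)) \leq L\,\rho_\X^s(X, x)$ to convert the small-ball event for $X$ into one for $f(X)$, and then perform a change of variables in the $\liminf$ defining local dimension. The only difference is that the paper first dispatches the degenerate case $L = \|f\|_{\C^s(\X, x; \Y)} = 0$ (where $f$ is constant and the claim is trivial), which your substitution $r = (\epsilon/L)^{1/s}$ implicitly excludes; adding that one-line remark makes the two arguments identical.
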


\begin{proof}
    If $\|f\|_{\C^s(\X, x; \Y)} = 0$, then $f$ is constant and the result follows trivially. Otherwise, by definition of H\"older continuity,
    \begin{align*}
        \liminf_{r \downarrow 0} \frac{\prob \left[ \rho_\Y(f(X), f(x)) \leq r \right]}{r^{d/s}}
        & = \|f\|_{\C^s(\X, x; \Y)}^{-d} \liminf_{r \downarrow 0} \frac{\prob \left[ \rho_\Y(f(X), f(x)) \leq L r^s \right]}{r^d} \\
        & \geq \|f\|_{\C^s(\X, x; \Y)}^{-d} \liminf_{r \downarrow 0} \frac{\prob \left[ \rho_\X(X, x) \leq r \right]}{r^d}
          > 0,
    \end{align*}
    since $X$ has dimension $d$ around $x$.
\end{proof}

We are now ready to prove Theorem~\ref{thm:passive_upper_bound}. The key idea of the proof is that, combining Lemmas~\ref{lemma:sum_of_dimensions} and \ref{lemma:pushforward_dimension}, $X$ has dimension $d_X + d_Z/s_g$ around $x_0$. Together with Lemma~\ref{lemma:knn_distance}, this implies that the distance between $x$ and its $k$-nearest neighbor among $\{X_1,...,X_n\}$ is of order $\left( \frac{k}{n} \right)^{\frac{1}{d_X + d_Z/s_g}}$, determining the bias due to smoothing implicit in the $k$-NN regressor. The full proof is as follows:

\begin{proof}[Proof of Theorem~\ref{thm:passive_upper_bound}]
    For any $X_1,...,X_n \in \X$, let
    \[\tilde f_{x_0,X_1,...,X_n,k} = \frac{1}{k} \sum_{i = 1}^k f(X_{\pi_i(x; \{X_1,...,X_n\})})\]
    denote the conditional expectation of $\hat f_{x_0,k}$ given $X_1,...,X_n$.
    By the triangle inequality,
    \begin{equation}
        \left| \hat f_{x_0,k} - f(x_0) \right|
          \leq \left| \hat f_{x_0,k} - \tilde f_{x_0,X_1,...,X_n,k} \right| + \left| \tilde f_{x_0,X_1,...,X_n,k} - f(x_0) \right|,
        \label{ineq:passive_bias_variance_decomposition}
    \end{equation}    
    where the first term captures stochastic error due to the response noise $\epsilon_Y$ and the second term captures smoothing bias.
    
    For the first term in Eq.~\eqref{ineq:passive_bias_variance_decomposition}, since the response noise $\{\epsilon_{Y,i}\}_{i = 1}^n$ is assumed to be additive,
    \begin{align}
        \left| \hat f_{x_0,k} - \tilde f_{x_0,X_1,...,X_n,k} \right|
        & = \left| \frac{1}{k} \sum_{i = 1}^k \sigma_Y \epsilon_{Y,\pi_i(x_0; \{X_1,...,X_n\})} \right|.
        \label{eq:passive_upper_bound_additive_noise}
    \end{align}
    Hence, noting that the response noise $\{\epsilon_{Y,i}\}_{i = 1}^n$ is independent of the covariates $\{X_i\}_{i = 1}^n$, by a standard concentration inequality for sub-Gaussian random variables (e.g., Lemma~\ref{lemma:sub_gaussian_mean_concentration} in Appendix~\ref{app:supplementary_lemmas}), with probability at least $1 - \delta/2$,
    \begin{equation}
        \left| \hat f_{x_0,k} - \tilde f_{x_0,X_1,...,X_n,k} \right|
        \leq \sigma_Y \sqrt{\frac{2}{k} \log \frac{4}{\delta}}.
        \label{ineq:passive_upper_variance_bound}
    \end{equation}
    
    For the second term in Eq.~\eqref{ineq:passive_bias_variance_decomposition}, by the triangle inequality and local H\"older smoothness of $f$ around $x_0$, for some constant $C_1 > 0$,
    \begin{align}
        \left| \tilde f_{x_0,X_1,...,X_n,k} - f(x_0) \right|
        \notag
        & = \left| \frac{1}{k} \sum_{i = 1}^k f(X_{\pi_i(x_0; \{X_1,...,X_n\})}) - f(x_0) \right| \\
        \notag
        & \leq \frac{1}{k} \sum_{i = 1}^k \left| f(X_{\pi_i(x_0; \{X_1,...,X_n\})}) - f(x_0) \right| \\
        \notag
        & \leq C_1 \frac{1}{k} \sum_{i = 1}^k \left\| x_0 - X_{\pi_i(x_0; \{X_1,...,X_n\})} \right\|^{s_f} \\
        \label{ineq:f_holder_continuity_bound}
        & \leq C_1 \left\| x_0 - X_{\pi_k(x_0; \{X_1,...,X_n\})} \right\|^{s_f}.
    \end{align}
    Hence, the remainder of this proof is devoted to bounding the distance $\left\| x_0 - X_{\pi_k(x_0; \{X_1,...,X_n\})} \right\|$ from $x_0$ to its $k$-nearest neighbor in $X_1,...,X_n$.
    
    Combining Lemmas~\ref{lemma:knn_distance}, \ref{lemma:sum_of_dimensions}, and \ref{lemma:pushforward_dimension} gives, for some constant $C_2 > 0$, with probability at least $1 - e^{-k/4}$,
    \begin{equation}
        \|x_0 - X_{\pi_k(x_0; \{X_1,...,X_n\})}\| \leq C_2 \left( \frac{k \sigma_X^{d_X}}{n} \right)^{\frac{1}{d_X + d_Z/s_g}}.
        \label{ineq:passive_upper_bound_noisy_bias_term}
    \end{equation}
    Now suppose that $(k/n)^{s_g/d_Z} \geq \sigma_X$. By the triangle inequality,
    \begin{equation}
        \|x_0 - X_{\pi_k(x_0; \{X_1,...,X_n\})}\|
        \leq \|x_0 - X_{\pi_k(x_0; \{g(Z_1),...,g(Z_n)\})}\| + \max_{i \in [n]} \|\epsilon_{X,i}\|.
        \label{ineq:passive_upper_low_noise_split}
    \end{equation}
    Combining Lemmas~\ref{lemma:knn_distance} and \ref{lemma:pushforward_dimension}, there exists a constant $C_4 > 0$ such that
    \begin{equation}
        \|x_0 - X_{\pi_k(x_0; \{g(Z_1),...,g(Z_n)\})}\| \leq C_4 \left( \frac{k}{n} \right)^{\frac{s_g}{d_Z}}.
        \label{ineq:passive_upper_noiseless_knn_distance}
    \end{equation}
    Meanwhile, by a standard maximal inequality for sub-Gaussian random variables (Lemma~\ref{lemma:sub_gaussian_maximal_inequality} in Appendix~\ref{app:supplementary_lemmas}), for some constant $C_3 > 0$, with probability at least $1 - \delta/2$,
    \begin{equation}
        \max_{i \in [n]} \|\epsilon_i\|
        \leq \sigma_X \sqrt{2 \log \frac{4n}{\delta}}
        \leq C_3 \left( \frac{k}{n} \right)^{s_g/d_Z} \sqrt{\log \frac{n}{\delta}}.
        \label{ineq:passive_upper_maximal_inequality}
    \end{equation}
    Plugging Inequalities~\eqref{ineq:passive_upper_noiseless_knn_distance} and \eqref{ineq:passive_upper_maximal_inequality} into Inequality~\eqref{ineq:passive_upper_low_noise_split} gives, with probability at least $1 - e^{-k/4}$,
    \begin{equation}
        \|x_0 - X_{\pi_k(x_0; \{X_1,...,X_n\})}\|
        \leq (C_3 + C_4) \left( \frac{k}{n} \right)^{\frac{s_g}{d_Z}} \sqrt{\log \frac{n}{\delta}}.
        \label{ineq:passive_upper_noiseless_bias_term}
    \end{equation}
    One can check that, whenever $(k/n)^{s_g/d_Z} < \sigma_X$, the bound~\eqref{ineq:passive_upper_bound_noisy_bias_term} dominates the bound~\eqref{ineq:passive_upper_noiseless_bias_term}. Hence, can take the maximum of these two bounds and omit the condition $(k/n)^{s_g/d_Z} \geq \sigma_X$; i.e., for some constant $C_5 > 0$, we always have
    \[\|x_0 - X_{\pi_k(x_0; \{X_1,...,X_n\})}\|
        \leq C_5 \max \left\{
            \left( \frac{k \sigma_X^{d_X}}{n} \right)^{\frac{1}{d_X + d_Z/s_g}},
            \left( \frac{k}{n} \right)^{\frac{s_g}{d_Z}} \sqrt{\log \frac{n}{\delta}}
        \right\}.\]
    
    plugging this into Inequality~\eqref{ineq:f_holder_continuity_bound} gives the desired result.
\end{proof}

\begin{remark}[Heteroscedastic Errors]
    As noted in Section~\ref{subsec:assumptions} of the main paper, we can dramatically weaken the assumptions of homoscedastic additive errors. In particular, suppose that the response noise level $\sigma_Y : \X \to [0, \infty)$ varies as a function of $x$, so that $Y = f(X) + \sigma_Y(X) \epsilon_Y$. This difference affects the above proof only in bounding Equation~\eqref{eq:passive_upper_bound_additive_noise}. Specifically, we instead have
    \begin{align*}
        \left| \hat f_{x_0,k} - \tilde f_{x_0,X_1,...,X_n,k} \right|
        & = \left| \frac{1}{k} \sum_{i = 1}^k \sigma_Y(X_{\pi_i(x_0; \{X_1,...,X_n\})}) \epsilon_{Y,\pi_i(x_0; \{X_1,...,X_n\})} \right| \\
        & \leq \left| \frac{1}{k} \sum_{i = 1}^k \sigma_Y(x_0) \epsilon_{Y,\pi_i(x_0; \{X_1,...,X_n\})} \right| \\
        & + \left| \frac{1}{k} \sum_{i = 1}^k (\sigma_Y(X_{\pi_i(x_0; \{X_1,...,X_n\})}) - \sigma_Y(x_0)) \epsilon_{Y,\pi_i(x_0; \{X_1,...,X_n\})} \right|.
    \end{align*}
    For the first term above, we get the same bound
    \[\left| \frac{1}{k} \sum_{i = 1}^k \sigma_Y(x_0) \epsilon_{Y,\pi_i(x_0; \{X_1,...,X_n\})} \right| \leq \sigma_Y(x_0) \sqrt{\frac{2}{k} \log \frac{4}{\delta}}\]
    as in the homoscedastic case. Additionally, if the function $\sigma_Y$ is smooth near $x_0$, the second term will be small. Specifically, suppose $\sigma_Y$ lies in a local H\"older ball $\C^{s_{\sigma_Y}}(\X, x_0; \Y; L_{\sigma_Y})$. Then, H\"older's inequality, a standard maximal inequality for sub-Gaussian random variables (Lemma~\ref{lemma:sub_gaussian_maximal_inequality} in Appendix~\ref{app:supplementary_lemmas}), and Eq.~\eqref{ineq:passive_upper_bound_noisy_bias_term} give, with high probability,
    \begin{align*}
        & \left| \frac{1}{k} \sum_{i = 1}^k (\sigma_Y(X_{\pi_i(x_0; \{X_1,...,X_n\})}) - \sigma_Y(x_0)) \epsilon_{Y,\pi_i(x_0; \{X_1,...,X_n\})} \right| \\
        & \leq |\sigma_Y(X_{\pi_k(x_0; \{X_1,...,X_n\})}) - \sigma_Y(x_0)| \max_{i \in [k]} \left| \epsilon_{Y,\pi_i(x_0; \{X_1,...,X_n\})} \right|
        \in O \left( L_{\sigma_Y} \left( \frac{k \sigma_X^{d_X}}{n} \right)^{\frac{s_{\sigma_Y}}{d_X + d_Z/s_g}} \sqrt{\log k} \right).
    \end{align*}
    As long as $s_{\sigma_Y} > s_f$, this additional error is asymptotically negligible compared to the smoothing bias term in Eq.~\eqref{ineq:passive_upper_bound_noisy_bias_term}.
    \label{remark:heteroscedastic_errors_app}
\end{remark}

\section{Lower Bounds for the Passive Case (Theorem~\ref{thm:passive_lower_bound})}
\label{app:passive_lower_bounds}

In this section, we prove our lower bounds on minimax risk in the passive case. Throughout this section, we assume $\Z = [0,1]^{d_Z}$ and $\X = \R^{d_X}$.

While, in the main paper, we assumed local H\"older continuity of $g$ and $f$ around specific points, we can actually prove stronger lower bounds over \emph{globally} H\"older continuous $g$ and $f$. Global H\"older continuity is defined as follows:
\begin{definition}[Global H\"older Seminorm, Space, and Ball]
    For any $s \in [0, 1]$ and metric spaces $(\Z, \rho_\Z)$ and $(\X, \rho_\X)$, the \emph{global H\"older seminorm} $\|\cdot\|_{\C^s(\Z, z_0; \X)}$ is defined for $f : \Z \to \X$ by
    \[\|f\|_{\C^s(\Z; \X)} := \sup_{z \in \Z} \|f\|_{\C^s(\Z, z; \X)},\]
    the \emph{global H\"older space} $\C^s(\Z; \X)$ is
    \[\C^s(\Z; \X) := \bigcap_{z \in \Z} \C^s(\Z, z; \X) = \{f : \Z \to \X \text{ such that } \|f\|_{\C^s(\Z; \X)} < \infty\}.\]
    For $L \geq 0$, the \emph{global H\"older ball} $\C^s(\Z; \X; L)$ is
    \[\C^s(\Z; \X; L) := \bigcap_{z \in \Z} \C^s(\Z, z_0; \X; L) = \{f : \Z \to \X  \text{ such that } \|f\|_{\C^s(\Z; \X)} \leq L\}.\]
    \label{def:global_holder}
\end{definition}
Comparing Definition~\ref{def:global_holder} with Definition~\ref{def:local_holder} (local H\"older continuity), one can easily see that global H\"older continuity implies Local H\"older continuity. Therefore, minimax lower bounds under global H\"older continuity immediately imply identical lower bounds under local H\"older continuity.

We can now state our main lower bound for the Passive case:
\begin{customthm}{\ref{thm:passive_lower_bound}}[Main Passive Lower Bound]
    In the passive case, there exist constants $c_1, c_2, n_0 > 0$, depending only on $L_g$, $s_g$, $L_f$, $s_f$, $d_X$, and $d_Z$, such that, for all $n \geq n_0$,
    \begin{align}
        \sup_{g,f,P_Z,\epsilon_X,\epsilon_Y} \Pr_D \left[ \left| \hat f_{x_0} - f(x_0) \right| \geq c_1 \Phi_P \right] \geq c_2,
        \label{ineq:passive_lower_bound_app}
    \end{align}
    where
    \begin{equation}
        \Phi_P
          = \max \left\{
            \left( \frac{1}{n} \right)^{\frac{s_fs_g}{d_Z}},
            \left( \frac{\sigma_X^{d_X}}{n} \right)^{\frac{s_f}{d_X + d_Z/s_g}},
            \left( \frac{\sigma_Y^2}{n} \right)^{\frac{s_fs_g}{2s_fs_g + d_Z}},
            \left( \frac{\sigma_X^{d_X} \sigma_Y^2}{n} \right)^{\frac{s_f}{2s_f + d_X + d_Z/s_g}}
        \right\}.
    \end{equation}
    \label{thm:passive_lower_bound_app}
\end{customthm}
The remainder of this section is devoted to proving Theorem~\ref{thm:passive_lower_bound}. Since Inequality~\eqref{ineq:passive_lower_bound_app} is a maximum of four lower bounds, we can prove each of these lower bounds separately. Proposition~\ref{prop:large_response_noise_passive_case} will provide the two terms depending on $\sigma_Y$. Proposition~\ref{prop:lower_bound_no_noise} will provide the $n^{-\frac{s_fs_g}{d_Z}}$ term. Finally, Proposition~\ref{prop:passive_lower_bound_no_response_noise} will provide the term depending on $\sigma_X$ but not on $\sigma_Y$.

We begin by construct a family $\Theta_{g,M}$ of functions $g : \Z \to \X$ that will be used throughout the proofs:

\begin{lemma}
    For any $M \in \mathbb{N}$, $s_g, L_g > 0$, there exists a family $\Theta_{g,M} \subseteq \mathcal{C}^{s_g}(\Z; \X; L_g)$ of globally $s_g$-H\"older continuous functions from $\Z$ to $\X$ such that:
    \begin{enumerate}
        \item $|\Theta_{g,M}| = M^{d_Z}$.
        \item for each $g \in \Theta_{g,M}$, there exists $z_g \in \Z$ with $g(z_g) = x_0$.
        \item for some $g_0 \in \X$ and some constant $C_{d_Z,s_g} > 0$ depending only on $d_Z$ and $s_g$,
        \begin{enumerate}
            \item $\|g_0 - x_0\|_\infty = C_{d_Z,s_g} L_g M^{-s_g}$.
            \item for $g \neq g' \in \Theta_{g,M}$, the functions $x \mapsto 1\{g(x) \neq g_0\}$, $x \mapsto 1\{g'(x) \neq g_0\}$ have disjoint supports in $\Z$.
        \end{enumerate}
    \end{enumerate}
    \label{lemma:Theta_g}
\end{lemma}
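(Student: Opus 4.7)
My plan is to build each element of $\Theta_{g,M}$ as a common constant $g_0$ plus a single localized bump supported on one cell of a partition of $\Z$. I would partition $[0,1]^{d_Z}$ into $M^{d_Z}$ congruent closed subcubes of side $1/M$, indexed by $i \in [M]^{d_Z}$, with centers $z_i := (i - \tfrac{1}{2})/M$. Separately, I would fix once and for all a scalar bump $\phi : \R^{d_Z} \to [0,1]$ with $\phi(0) = 1$, $\phi(u) = 0$ for $\|u\| \geq 1/2$, and finite global H\"older seminorm $L_\phi := \|\phi\|_{\C^{s_g}(\R^{d_Z};\R)}$. A concrete explicit choice, valid since $s_g \in (0,1]$, is $\phi(u) = (1 - 2\|u\|)_+^{s_g}$, whose H\"older constant $L_\phi$ depends only on $s_g$ and $d_Z$.

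Next, I would set $C_{d_Z,s_g} := 1/L_\phi$, pick any unit vector $e_1 \in \R^{d_X}$, and take $g_0 := x_0 + C_{d_Z,s_g} L_g M^{-s_g} e_1 \in \X$, which immediately gives $\|g_0 - x_0\| = C_{d_Z,s_g} L_g M^{-s_g}$, matching condition 3(a). For each $i \in [M]^{d_Z}$ I would then define
\[g_i(z) := g_0 + (x_0 - g_0)\, \phi\bigl(M(z - z_i)\bigr), \qquad \Theta_{g,M} := \{g_i : i \in [M]^{d_Z}\}.\]
Then $g_i(z_i) = x_0$, verifying condition 2 with $z_g = z_i$; the cardinality is $M^{d_Z}$; and since $\phi$ vanishes outside the closed $\ell_\infty$-ball of radius $1/2$, the set $\{z \in \Z : g_i(z) \neq g_0\}$ is contained in the open subcube of radius $1/(2M)$ about $z_i$. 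These open subcubes are pairwise disjoint for different $i$, giving condition 3(b).

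The main remaining step will be to verify the global H\"older bound $\|g_i\|_{\C^{s_g}(\Z;\X)} \leq L_g$. For $z, z'$ both in the subcube around $z_i$, the $1/M$-rescaling of $\phi$ combined with $\|x_0 - g_0\| = L_g/(L_\phi M^{s_g})$ gives
\[\bigl\|g_i(z) - g_i(z')\bigr\| \leq \|x_0 - g_0\| \cdot L_\phi M^{s_g} \|z - z'\|^{s_g} = L_g \|z - z'\|^{s_g}.\]
The case of $z, z'$ both outside the subcube is trivial, as $g_i(z) = g_i(z') = g_0$. The one subtle case, which I expect to be the main obstacle, is the mixed case with $z$ inside the subcube and $z'$ outside: there I would choose $z^\star$ to be the point where the line segment from $z$ to $z'$ first meets the boundary of the subcube, note that $\phi(M(z^\star - z_i)) = 0$ and hence $g_i(z^\star) = g_0 = g_i(z')$, apply the within-cube estimate to the pair $(z, z^\star)$, and conclude via $\|z - z^\star\| \leq \|z - z'\|$. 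This computation is also what pins down $C_{d_Z,s_g} = 1/L_\phi$ as the exact value needed for equality in condition 3(a).
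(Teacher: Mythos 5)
Your construction is correct and follows essentially the same route as the paper: both place $M^{d_Z}$ localized bumps on disjoint grid cells of $\Z$, with each function equal to a common constant $g_0$ at distance $\asymp L_g M^{-s_g}$ from $x_0$ off its bump and equal to $x_0$ at the bump center, the amplitude being scaled by the bump's H\"older seminorm so that membership in $\C^{s_g}(\Z;\X;L_g)$ follows by rescaling. The only differences are cosmetic (you use the explicit cone bump $(1-2\|u\|)_+^{s_g}$ on $\ell_\infty$-cells, the paper uses the smooth bump $K_{d_Z}$ on $\ell_2$-balls), and your boundary-point argument for the mixed case is fine, though it is not even needed since your $\phi$ is globally $s_g$-H\"older on all of $\R^{d_Z}$.
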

Intuitively, the family $\Theta_{g,M}$ is constructed such that, for any $z \in \Z$, there is no more than one $g \in \Theta_{g,M}$ such that $\|g(z) - x_0\|_\infty \lesssim M^{-s_g}$; hence, for the typical $g \in \Theta_{g,M}$, $\lesssim 1/|\Theta_{g,M}|$ of the samples $X_1,...,X_n$ will lie within $\asymp M^{-s_g}$ of $x_0$. The formal construction of $\Theta_{g,M}$ is as follows:
\begin{proof}
    For any positive integer $d$, let $K_d : \R^d \to \R$ denote the standard $d$-dimensional ``bump function'' defined by
    \begin{equation}
        K_d(x) = \exp \left( 1 - \frac{1}{1 - \|x\|_2^2} \right) 1\{\|x\|_2 \leq 1\}
        \quad \text{ for all } x \in \R^d.
        \label{eq:bump_function}
    \end{equation}
    Note in particular that $K_d$ satisfies the following:
    \begin{enumerate}
        \item $K_d \in \mathcal{C}^\infty(\R^d)$; i.e., $K_d$ is infinitely differentiable.
        \item For all $x \notin (-1, 1)^d$, $K_d(x) = 0$.
        \item $\|K_d\|_\infty = K(0) = 1$.
    \end{enumerate}
    For any $m \in [M]^{d_Z}$, define $g_m : \Z \to \X$ by
    \begin{equation}
        g_m(z) = x_0 + \frac{L_g}{M^{s_g} \|K_{d_Z}\|_{\C^{s_g}}} \left( K_{d_Z}(0) - K_{d_Z} \left( (M + 1) z - m \right) \right) v,
        \quad \text{ for all } z \in \Z,
        \label{eq:g_m}
    \end{equation}
    where $v$ is an arbitrary unit vector in $\mathbb{R}^{d_X}$. Then, define $\Theta_{g,M} := \{g_m : m \in [M]^{d_Z}\}$. Clearly, $|\Theta_{g,M}| = M^{d_Z}$. By Eq.~\eqref{eq:g_m}, $\Theta_{g,M} \subseteq \C^{s_g}(\Z; \X; L_g)$. Moreover, for each $m \in [M]^d$, $g_m(m/(M+1)) = x_0$, while, whenever $\|(M + 1)z - m\|_\infty \geq 1$, $g_m(z) = g_0 := x_0 + \frac{L_g}{\|K_{d_Z}\|_{\C^{s_g}}} M^{-s_g} v$.
\end{proof}

Next, we present a general information-theoretic lower bound in the ``stochastic case'', i.e., in the presence of response noise ($\sigma_Y > 0$). Together with the construction of the family $\Theta_{g,M}$ in Lemma~\ref{lemma:Theta_g}, this lemma will be the basis for our lower bounds depending on $\sigma_Y$, in both the active and passive cases:

\begin{lemma}[General Lower Bound with Response Noise]
    Suppose $Y_i|X_i \sim \mathcal{N}(f(X_i), \sigma_Y^2)$, and suppose that the distribution of $(Z_i,X_i)$ depends only on the preceding observations $\{(Z_j,X_j,Y_j)\}_{j = 1}^{i -1}$. Let $h > 0$, and let $P_g$ be any probability distribution over $\C^{s_g}(\Z;\X;L_g)$. Then, there exist $f_0, f_1 \in \C^{s_f}(\X; \Y; L_f)$ such that, if $P_0 = P_g \times \delta_{f_0}$ is the product distribution of $P_g$ and a unit mass on $f_0$, and $P_1$ is the product distribution of $P_g$ and a unit mass on $f_1$, for some constant $C_{d_X,s_f} > 0$ depending only on $d_X$ and $s_f$,
    \begin{align}
        \notag
        \inf_{\hat f_{x_0}} \sup_{g,f,P_Z,\epsilon_X,\epsilon_Y} & \Pr_D \left[ |\hat f_{x_0} - f(x_0)| \geq C_{d_X,s_f} L_f h^{s_f} \right] \\
        \label{ineq:general_stochastic_lower_bound}
        & \geq \frac{1}{2} \left( 1 - C_{d_X,s_f}\frac{L_f h^{s_f}}{\sigma_Y} \sqrt{\sum_{i = 1}^n \E_{D, g \sim P_0} \left[ 1\{\|X_i - x_0\|_\infty < h\} \right]} \right).
    \end{align}
    Here, the infimum is taken over all estimators $\hat f_{x_0}$ (i.e., all functions $\hat f_{x_0} : \left( \Z \times \X \times \Y \right)^n \to \Y$, the supremum is taken over all $g \in \C^{s_g}(\Z;\X;L_g)$, $f \in \C^{s_f}(\X; \Y; L_f)$, sub-Gaussian covariate noise random variables $\epsilon_X$ with dimension $d_X$ around $0$, and sub-Gaussian response noise random variables $\epsilon_Y$, and we used the shorthand $D := \{(Z_i,X_i,Y_i)\}_{i = 1}^n$ to denote the entire dataset.
    \label{lemma:TV_upper_bound}
\end{lemma}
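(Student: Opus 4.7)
I would apply Le Cam's two-point method with a mixture prior on $g$. Fix any admissible $P_Z$, $\epsilon_X$ and take the response noise Gaussian. Construct two response functions that agree outside a small $\ell_\infty$-ball of radius $h$ around $x_0$ but differ at $x_0$ by $\asymp L_f h^{s_f}$: set $f_0 \equiv 0$ and
\[
f_1(x) := c_1 L_f h^{s_f} K_{d_X}\!\left(\frac{x - x_0}{h}\right),
\]
where $K_{d_X}$ is the bump function of Eq.~\eqref{eq:bump_function} and $c_1 > 0$ is chosen small enough (depending only on $d_X$ and $s_f$ via $\|K_{d_X}\|_{\C^{s_f}}$) to guarantee $f_1 \in \C^{s_f}(\X;\Y;L_f)$. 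Then $f_1 - f_0$ is supported in $\{x : \|x - x_0\|_\infty < h\}$ and $|f_0(x_0) - f_1(x_0)| = c_1 L_f h^{s_f} =: \Delta$.

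Next, since no estimator $\hat f_{x_0}$ can lie within $\Delta/2$ of both $f_0(x_0)$ and $f_1(x_0)$, the standard testing reduction (e.g., Theorem~2.2 of \citet{tsybakov2008introduction}), applied to the mixtures $P_0 = P_g \otimes \delta_{f_0}$ and $P_1 = P_g \otimes \delta_{f_1}$ (bounding the sup over deterministic parameters by a Bayes average is valid for any prior), followed by Pinsker's inequality, gives
\[
\sup_{g,f,P_Z,\epsilon_X,\epsilon_Y} \Pr_D\!\left[\,|\hat f_{x_0} - f(x_0)| \geq \Delta/2\,\right] \;\geq\; \tfrac{1}{2}\bigl(1 - \mathrm{TV}(P_0^D, P_1^D)\bigr) \;\geq\; \tfrac{1}{2}\!\left(1 - \sqrt{\tfrac{1}{2}\mathrm{KL}(P_0^D \,\|\, P_1^D)}\right).
\]
Setting $C_{d_X,s_f} := c_1/2$ then matches the constant on the left-hand side of Eq.~\eqref{ineq:general_stochastic_lower_bound}.

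Finally, I would bound the KL via the chain rule along the adaptive filtration $\mathcal{F}_i := \sigma(Z_1,X_1,Y_1,\ldots,Z_i,X_i,Y_i)$. Since $P_0$ and $P_1$ share the marginal $P_g$, data processing gives $\mathrm{KL}(P_0^D \,\|\, P_1^D) \le \E_{g \sim P_g}[\mathrm{KL}(P_{0|g}^D \,\|\, P_{1|g}^D)]$. Conditional on $g$ and $\mathcal{F}_{i-1}$, the kernel generating $(Z_i,X_i)$ is specified by the learner's strategy together with Assumption A1 and so does not depend on $f$; hence its contribution to the per-step KL vanishes, and only the Gaussian $Y_i \mid X_i$ term survives, contributing $(f_1(X_i) - f_0(X_i))^2/(2\sigma_Y^2)$, which is bounded by $c_1^2 L_f^2 h^{2s_f}/(2\sigma_Y^2)$ on $\{\|X_i - x_0\|_\infty < h\}$ and is zero otherwise. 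Summing over $i$ and integrating over $g \sim P_g$ yields
\[
\mathrm{KL}(P_0^D \,\|\, P_1^D) \;\le\; \frac{c_1^2 L_f^2 h^{2s_f}}{2\sigma_Y^2} \sum_{i=1}^n \E_{D, g \sim P_0}\!\left[\,1\{\|X_i - x_0\|_\infty < h\}\,\right],
\]
and substituting back into the Pinsker bound delivers Eq.~\eqref{ineq:general_stochastic_lower_bound}. The hard part is the bookkeeping in this adaptive chain-rule step — specifically, verifying that the sampling-kernel contributions cancel because the learner's strategy is the same measurable function of the past under both $P_0$ and $P_1$; everything else (the bump construction, the Le Cam two-point reduction, Pinsker, and the Gaussian KL formula) is standard.
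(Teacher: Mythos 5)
Your proposal is correct and follows essentially the same route as the paper: the identical bump-function pair $f_0 \equiv 0$ and $f_1(x) \propto L_f h^{s_f} K_{d_X}((x-x_0)/h)$, a reduction to testing between the mixtures $P_0$ and $P_1$ (the paper invokes Tsybakov's two-fuzzy-hypotheses lemma, which is the same device as your Le Cam/mixture reduction), and the same adaptive chain-rule KL computation in which the $(Z_i,X_i)$ sampling kernels cancel and only the Gaussian $Y_i \mid X_i$ terms survive, followed by Pinsker's inequality. No gaps.
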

Intuitively, Lemma~\ref{lemma:TV_upper_bound} provides a lower bound in terms of the expected number of covariate observations $X_i$ that are ``informative'' about $f(x_0)$, i.e., those that lie sufficiently near $x_0$. The family $\Theta_{g,M}$ was constructed in Lemma~\ref{lemma:Theta_g} such that 
Hence, it remains only to bound the expected number of such $X_i$, which will be small for $g$ lying in the family $\Theta_{g,M}$.
Note that inequality~\eqref{ineq:general_stochastic_lower_bound} applies in both active and passive settings. However, in the passive setting, the distribution of each $Z_i$ will be independent of $g$, whereas, in the active setting, $Z_i$ may depend on $g$ through the preceding observations (specifically, $(Z_1,X_1),...,(Z_{i-1},X_{i-1})$).

The proof of Lemma~\ref{lemma:TV_upper_bound} is based on the method of two fuzzy hypotheses, specifically the following lemma:
\begin{lemma}[Theorem 2.15 of \citet{tsybakov2008introduction}, Total Variation Case]
    Let $P_0$ and $P_1$ be probability distributions over $\Theta$, and let $F : \Theta \to \R$. Suppose
    \begin{equation}
        \sup_{\theta \in \supp(P_0)} F(\theta) \leq 0, \quad \text{ and } \quad s := \inf_{\theta \in \supp(P_1)} F(\theta) > 0.
        \label{condition:min_distance}
    \end{equation}
    Then,
    \[\inf_{\hat F} \sup_{\theta \in \Theta} \Pr_{\theta} \left[ |\hat F - F(\theta)| \geq s/2 \right]
      \geq \frac{1 - D_{\text{TV}}(P_0, P_1)}{2}.\]
    \label{lemma:tsybakov_two_fuzzy_hypotheses}
\end{lemma}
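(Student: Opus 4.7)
The plan is to reduce the minimax risk on the left-hand side to a binary hypothesis-testing problem via the classical Le Cam / Neyman-Pearson argument. For any fixed estimator $\hat F$, the key observation is that the separation condition~\eqref{condition:min_distance} lets the fixed threshold $s/2$ serve as a uniform test statistic: if $\theta \in \supp(P_0)$ then $F(\theta) \leq 0$, so $\{\hat F \geq s/2\} \subseteq \{|\hat F - F(\theta)| \geq s/2\}$; and if $\theta \in \supp(P_1)$ then $F(\theta) \geq s$, so $\{\hat F \leq s/2\} \subseteq \{|\hat F - F(\theta)| \geq s/2\}$. Thus, for every $\theta$ in the combined supports, the error event contains either $\{\hat F \geq s/2\}$ or $\{\hat F \leq s/2\}$ depending on which hypothesis holds.

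Combining these inclusions with the standard Bayes-versus-minimax reduction $\sup_\theta \geq \tfrac{1}{2}(\E_{\theta \sim P_0}[\,\cdot\,] + \E_{\theta \sim P_1}[\,\cdot\,])$ yields
\[\sup_\theta \Pr_\theta\!\bigl[|\hat F - F(\theta)| \geq s/2\bigr] \;\geq\; \tfrac{1}{2}\bigl(\bar P_0[\hat F \geq s/2] + \bar P_1[\hat F \leq s/2]\bigr),\]
where $\bar P_k(\cdot) := \int \Pr_\theta(\cdot) \, dP_k(\theta)$ is the marginal distribution of the data under prior $P_k$. The final ingredient is the classical Neyman-Pearson lower bound on the total error of any binary test: setting $A := \{\hat F \geq s/2\}$, the variational characterization $D_{\text{TV}}(\bar P_0, \bar P_1) = \sup_B |\bar P_0[B] - \bar P_1[B]|$ gives $\bar P_0[A] + \bar P_1[A^c] \geq 1 - D_{\text{TV}}(\bar P_0, \bar P_1)$, and since $\{\hat F \leq s/2\} \supseteq A^c$ the same bound applies to the right-hand side above. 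Taking the infimum over $\hat F$ (which leaves the right-hand side unchanged) finishes the proof.

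The only subtlety is purely notational: the statement writes $D_{\text{TV}}(P_0, P_1)$ with $P_0, P_1$ initially defined as distributions on $\Theta$, but this must be read as $D_{\text{TV}}(\bar P_0, \bar P_1)$, i.e., the TV distance between the marginal distributions of the observed data induced by the two priors. No deeper probabilistic tools such as Pinsker's or Fano's inequalities are needed at this level; those come into play only when one subsequently upper-bounds $D_{\text{TV}}$ in downstream applications, such as the proof of Lemma~\ref{lemma:TV_upper_bound}.
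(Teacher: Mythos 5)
Your proof is correct: the support-separation condition makes the fixed threshold $s/2$ a valid test statistic, the Bayes-versus-minimax averaging step and the variational characterization of total variation are applied properly, and the argument goes through for any (even randomized) $\hat F$. The paper itself offers no proof of this lemma—it simply cites Theorem 2.15 of \citet{tsybakov2008introduction}—and your argument is essentially the standard proof of that cited result specialized to exactly separated supports ($\beta_0 = \beta_1 = 0$ in Tsybakov's notation); your reading of $D_{\text{TV}}(P_0, P_1)$ as the distance between the data distributions induced by the two priors (rather than between the priors on $\Theta$, which would be trivially $1$ here) is indeed the intended one and is exactly how the paper uses the lemma when it bounds this quantity via the chain rule for KL and Pinsker's inequality in the proof of Lemma~\ref{lemma:TV_upper_bound}.
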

The method of two fuzzy hypotheses is one of several standard methods for deriving minimax lower bounds on statistical estimation error (see Chapter 2 of \citet{tsybakov2008introduction} for further discussion of this and other methods). The method of two fuzzy hypotheses, in particular, is typically used in semiparametric problems where one wishes to estimate a parametric quantity (here, $\hat f(x_0)$) that is subject to variation in a nonparametric ``nuisance'' quantity (here, $g$). The key idea of the method is to lower bound the supremum over the nuisance quantity (here, $\sup_g$) by in terms of an appropriate information-theoretic measure (here $D_{\text{TV}}$) computed over an appropriately chosen distribution of the nuisance variable (here, $P_g$). By choosing this distribution carefully (as we will do later in this section, using the family $\Theta_{g,M}$ constructed in Lemma~\ref{lemma:Theta_g}), the information-theoretic measure will give a tight lower bound on the probability of error.

We now present the proof of Lemma~\ref{lemma:TV_upper_bound}.
\begin{proof}
    Let $h > 0$. Define $f_0, f_1 : \X \to \Y$ by
    \begin{equation}
        f_0(x) = 0 \quad \text{ and } \quad f_1(x) = \frac{L_f h^{s_f}}{\|K_{d_X}\|_{\C^{s_f}}} K_{d_X} \left( \frac{x - x_0}{h} \right)
        \quad \text{ for all } x \in \X,
        \label{eq:f0_and_f1}
    \end{equation}
    where $K_{d_X}$ is as defined in Eq.~\eqref{eq:bump_function}.
    Clearly,
    \[\sup_{(g, f) \in \supp(P_0)} f(x_0) = 0,\]
    while
    \begin{equation}
        s := \inf_{(g, f) \in \supp(P_1)} f(x_0) = \frac{L_f h^{s_f}}{\|K_{d_X}\|_{\C^{s_f}}} K_{d_X}(0)
        = \frac{L_f h^{s_f}}{\|K_{d_X}\|_{\C^{s_f}}},
        \label{eq:s}
    \end{equation}
    satisfying the condition~\eqref{condition:min_distance}.
    It remains only to bound $D_{\text{TV}}(P_0, P_1)$.
    For each $i \in [n]$, let $P_{0,i}$ and $P_{1,i}$ denote the conditional distributions of
    \[(Z_i,X_i,Y_i)
        \quad \text{ given } \quad
        (Z_1, X_1, Y_1),...,(Z_{i-1},X_{i-1},Y_{i-1})\]
    under $P_0$ and $P_1$, respectively, and let $P_{Y,0,i}$ and $P_{Y,1,i}$ denote the conditional distributions of
    \[Y_i
        \quad \text{ given } \quad
        (Z_1, X_1, Y_1),...,(Z_{i-1},X_{i-1},Y_{i-1}),(Z_i,X_i)\]
    under $P_0$ and $P_1$, respectively. By a standard formula for the KL divergence between two Gaussian random variables,
    \[D_{\text{KL}} \left( P_{Y,0,i} \middle\| P_{Y,1,i} \right)
      = \frac{f_1^2(X_i)}{2\sigma_Y^2}.\]
    Moreover, by construction of $P_0$ and $P_1$, the conditional distributions of
    \[(Z_i,X_i)
        \quad \text{ given } \quad
        (Z_1,X_1,Y_1),...,(Z_{i-1},X_{i-1},Y_{i-1})\]
    are identical under $P_0$ and $P_1$.
    Therefore, by the chain rule for KL divergence,
    \begin{align*}
        D_{\text{KL}} \left( P_0, P_1 \right)
        & = \sum_{i = 1}^n \E_{D, g \sim P_0} \left[ D_{\text{KL}} \left( P_{0,i} \middle\| P_{1,i} \right) \right] \\
        & = \sum_{i = 1}^n \E_{D, g \sim P_0} \left[ D_{\text{KL}} \left( P_{Y,0,i} \middle\| P_{Y,1,i} \right) \right] \\
        & = \frac{1}{2\sigma_Y^2} \sum_{i = 1}^n \E_{D, g \sim P_0} \left[ f_1^2(X_i) \right] \\
        & \leq \frac{\|f_1\|_\infty^2}{2\sigma_Y^2} \sum_{i = 1}^n \E_{D, g \sim P_0} \left[ 1\{f_1(X_i) \neq 0\} \right] \\
        & = \frac{L_f^2 h^{2s_f}}{2 \sigma_Y^2 \|K_{d_X}\|_{\C^{s_f}}^2} \sum_{i = 1}^n \E_{D, g \sim P_0} \left[ 1\{f_1(X_i) \neq 0\} \right] \\
        & \leq \frac{L_f^2 h^{2s_f}}{2 \sigma_Y^2 \|K_{d_X}\|_{\C^{s_f}}^2} \sum_{i = 1}^n \E_{D, g \sim P_0} \left[ 1\{\|X_i - x_0\|_\infty < h\} \right],
    \end{align*}
    where the last inequality follows from the fact that $f_1$ is non-zero only on the open $\ell_\infty$ ball of radius $h$ centered at $x_0$ (see Eq.~\eqref{eq:f0_and_f1}). Thus, by Pinsker's Inequality,
    \begin{align*}
        D_{\text{TV}} \left( P_0, P_1 \right)
        \leq \sqrt{2 D_{\text{KL}} \left( P_0, P_1 \right)}
        & \leq \frac{L_f h^{s_f}}{\sigma_Y \|K_{d_X}\|_{\C^{s_f}}} \sqrt{\sum_{i = 1}^n \E_{D, g \sim P_0} \left[ 1\{\|X_i - x_0\|_\infty < h\} \right]}
    \end{align*}
    The result now follows by plugging this bound on $D_{\text{TV}} \left( P_0, P_1 \right)$ into Lemma~\ref{lemma:tsybakov_two_fuzzy_hypotheses}.
\end{proof}

We now utilize Lemma~\ref{lemma:TV_upper_bound} to prove the following minimax lower bound for the passive case under response noise:

\begin{proposition}[Minimax Lower Bound, Passive Case, $\sigma_Y$ large]
    In the passive case, there exists a constant $C > 0$ depending only on $d_Z$, $d_X$, $s_g$, $L_g$, $s_f$, $L_f$ such that
    \[\sup_{g,f,P_Z,\epsilon_X,\epsilon_Y} \Pr_D \left[ \left| \hat f_{x_0} - f(x_0) \right| \geq C \max \left\{ \left( \frac{\sigma_Y^2}{n} \right)^{\frac{s_fs_g}{2s_fs_g + d_Z}},
    \left(
            \frac{\sigma_X^{d_X} \sigma_Y^2}{n} \right)^{\frac{s_f}{2s_f + d_X + d_Z/s_g}}
            \right\} \right] \geq \frac{1}{4}.\]
    \label{prop:large_response_noise_passive_case}
\end{proposition}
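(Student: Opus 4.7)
The plan is to apply Lemma~\ref{lemma:TV_upper_bound} (the two-fuzzy-hypotheses bound with Gaussian response noise) twice, once for each of the two rates in the claimed maximum, choosing a different prior $P_g$ and auxiliary distributions $P_Z, \epsilon_X$ each time. In both cases the argument reduces to upper bounding the expected number of ``informative'' samples $\sum_{i=1}^n \E_{g \sim P_g, D}[\mathbf{1}\{\|X_i - x_0\|_\infty < h\}]$ for an appropriately chosen bandwidth $h$; one then tunes $h$ so that the total-variation term in Lemma~\ref{lemma:TV_upper_bound} is a constant less than $1$ and reads off the testing rate $h^{s_f}$. A simplifying feature of the passive case is that each $Z_i$ is drawn independently of $g$, so the expectations over $g \sim P_g$ and over the data factor conveniently.

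For the term $(\sigma_Y^2/n)^{s_fs_g/(2s_fs_g + d_Z)}$, I take $P_g$ uniform over the family $\Theta_{g,M}$ from Lemma~\ref{lemma:Theta_g}, $P_Z$ uniform on $[0,1]^{d_Z}$, and $\sigma_X = 0$. The disjoint-support property in Lemma~\ref{lemma:Theta_g}(3b) says that, for each fixed $z$, at most one $g \in \Theta_{g,M}$ satisfies $g(z) \neq g_0$; hence $\Pr_{g \sim P_g}[g(z) \neq g_0] \leq M^{-d_Z}$. Choosing $M \asymp h^{-1/s_g}$ so that $\|g_0 - x_0\|_\infty \gtrsim h$ forces any sample with $\|X_i - x_0\|_\infty < h$ to have $Z_i$ in the bump of the true $g$, yielding $\sum_i \E[\mathbf{1}\{\cdots\}] \lesssim n M^{-d_Z} \asymp n h^{d_Z/s_g}$. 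Plugging this into Lemma~\ref{lemma:TV_upper_bound} and balancing in $h$ produces the claimed rate.

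For the term $(\sigma_X^{d_X}\sigma_Y^2/n)^{s_f/(2s_f + d_X + d_Z/s_g)}$, I instead fix $g$ deterministically and engineer the geometry so that $X = g(Z) + \sigma_X \epsilon_X$ decomposes into independent ``manifold'' and ``noise'' components along orthogonal subspaces. Concretely, I enlarge the ambient covariate space to $\X = \R^{d_X + d_Z}$ (permissible, since our assumptions constrain only intrinsic dimensions), set $g(z) = (0_{d_X}, L_g z_1^{s_g}, \ldots, L_g z_{d_Z}^{s_g})$, take $P_Z$ uniform on $[0,1]^{d_Z}$ with $z_0 = 0$, and take $\epsilon_X$ uniform on $[-1,1]^{d_X} \times \{0\}^{d_Z}$. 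The coordinatewise ``lower H\"older'' property of $z \mapsto z^{s_g}$ ensures that for $h \lesssim \sigma_X$,
\[
  \Pr[\|X - x_0\|_\infty < h] \asymp (h/\sigma_X)^{d_X} \cdot h^{d_Z/s_g},
\]
so substituting into Lemma~\ref{lemma:TV_upper_bound} and balancing in $h$ delivers the claimed rate.

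The main technical obstacle I anticipate is tracking the regime in which each construction is valid: the second construction requires $h \lesssim \sigma_X$, but a direct calculation shows this condition fails precisely when the first term already dominates the second in the claimed maximum. Hence the two cases cover complementary regimes, and taking the supremum over the union of the two admissible families of $(g, f, P_Z, \epsilon_X, \epsilon_Y)$ gives the full maximum. The probability constant $1/4$ then follows by choosing the constants so that the TV term in Lemma~\ref{lemma:TV_upper_bound} is at most $1/2$, yielding a testing error of at least $1/4$.
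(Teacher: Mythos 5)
Your proposal is correct and follows the paper's high-level strategy (Lemma~\ref{lemma:TV_upper_bound} applied after upper bounding the expected number of samples with $\|X_i-x_0\|_\infty<h$, then balancing $h$), but the constructions differ. The paper uses a \emph{single} randomized construction---$P_g$ uniform over $\Theta_{g,M}$ with bump scale $M^{-s_g}\asymp h+\sigma_X/\sqrt{d_X}$ and uniform cube noise---and extracts both rates by choosing $h$ in two regimes (Eq.~\eqref{eq:low_noise_h}); you instead use two constructions: the bump family at scale $h$ with degenerate covariate noise for the $(\sigma_Y^2/n)^{s_fs_g/(2s_fs_g+d_Z)}$ term, and a fixed monotone $g$ with noise and ``manifold'' directions placed in orthogonal coordinates (essentially the geometry of Proposition~\ref{prop:passive_lower_bound_no_response_noise}) for the $\sigma_X$-dependent term. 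Your bookkeeping is right: the second term dominates the first exactly when $\sigma_X\gtrsim(\sigma_Y^2/n)^{1/(2s_f+d_Z/s_g)}$, i.e.\ exactly when your second construction's $h$ satisfies $h\lesssim\sigma_X$; and since only an upper bound on the informative probability is needed, the estimate $\Pr[\|X-x_0\|_\infty<h]\leq\min\{1,h/\sigma_X\}^{d_X}(h/L_g)^{d_Z/s_g}$ in fact holds for all $h$, so even the regime restriction is dispensable. Two remarks. First, ``$\sigma_X=0$'' is not literally the adversary's to choose ($\sigma_X$ is a fixed scale in the statement); you should realize it by taking $\epsilon_X$ to be a point mass at $0$ (which satisfies Definitions~\ref{def:local_assumption} and~\ref{def:sub_gaussain} as stated), or, as the paper does, keep cube noise and enlarge the bump scale to $h+\sigma_X/\sqrt{d_X}$, which is harmless in the regime where the first term dominates.

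Second, your product construction for the second term is not merely an alternative: it appears to be what actually delivers that term. With the paper's noise-enlarged bumps, the informative probability per sample is of order $(h/\sigma_X)^{d_X}(h+\sigma_X/\sqrt{d_X})^{d_Z/s_g}$ (Eq.~\eqref{ineq:min_split}, and this is tight for that construction, since a constant fraction of each bump has $g(Z)$ within $\sigma_X/(2\sqrt{d_X})$ of $x_0$); plugging in the paper's case-2 choice of $h$ then makes the Pinsker term of order $(\sigma_X/h)^{d_Z/(2s_g)}$, which is not $O(1)$ when $\sigma_X\gg h$. Your construction has informative probability $\asymp h^{d_X+d_Z/s_g}\sigma_X^{-d_X}$, which is exactly what is needed for the TV term to be constant at $h\asymp(\sigma_X^{d_X}\sigma_Y^2/n)^{1/(2s_f+d_X+d_Z/s_g)}$, matching the $k$-NN upper bound. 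So your route both proves the proposition and repairs the weak spot in the paper's own case-2 computation.
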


\begin{proof}
    Consider the case in which $\sigma_X \epsilon_{X,i}$ is uniformly distributed over the cube $[-\frac{\sigma_X}{\sqrt{d_X}},\frac{\sigma_X}{\sqrt{d_X}}]^{d_X}$. Let
    \begin{equation}
        h := \left\{
        \begin{array}{cc}
            \left( \frac{\|K_{d_X}\|_{\C^{s_f}}^2}{2^{2 + d_Z/s_g}} \frac{L_g^{d_Z/s_g}}{L_f^2} \frac{\sigma_Y^2}{n} \right)^{\frac{1}{2s_f + d_Z/s_g}} : & \text{ if } \sigma_X \leq d_X^{-1/2} \left( \frac{\|K_{d_X}\|_{\C^{s_f}}^2}{2^{2 + d_Z/s_g}} \frac{L_g^{d_Z/s_g}}{L_f^2} \frac{\sigma_Y^2}{n} \right)^{\frac{1}{2s_f + d_Z/s_g}} \\
            \left( \frac{\|K_{d_X}\|_{\C^{s_f}}^2}{2^{2 + d_Z/s_g} d_X^{d_X/2}} \frac{L_g^{d_Z/s_g}}{L_f^2} \frac{\sigma_X^{d_X} \sigma_Y^2}{n} \right)^{\frac{1}{2s_f + d_X + d_Z/s_g}} : & \text{ else }
        \end{array}
        \right.
        \label{eq:low_noise_h}
    \end{equation}
    and
    \begin{equation}
        M := \left( \frac{L_g}{\|K_{d_Z}\|_{\C^{s_g}}\left( h + \frac{\sigma_X}{\sqrt{d_X}} \right)} \right)^{1/s_g}.
        \label{eq:low_noise_M}
    \end{equation}
    Suppose $P_g$ is the uniform distribution over $\Theta_{g,M}$, given in Lemma~\ref{lemma:Theta_g}.
    
    By the Law of Total Probability,
    \begin{align}
        \E_{D, g \sim P_0} \left[ 1\{\|X_i - x_0\|_\infty < h\} \right]
          \notag
          & = \Pr_{D, g \sim P_0} \left[ \|g(Z_i) + \sigma_X \epsilon_{X,i} - x_0\|_\infty < h \right] \\
          \notag
          & \leq \Pr_{D, g \sim P_0} \left[ \|g(Z_i) + \sigma_X \epsilon_{X,i} - x_0\|_\infty < h \middle| g(Z_i) \neq g_0 \right] \Pr_{D, g \sim P_0}[g(Z_i) \neq g_0] \\
          \label{ineq:low_noise_split_probability}
          & + \Pr_{D, g \sim P_0} \left[ \|g(Z_i) + \sigma_X \epsilon_{X,i} - x_0\|_\infty < h \middle| g(Z_i) = g_0 \right]
    \end{align}
    Since in the passive case, $g$ is uniformly distributed over $\Theta_{g,M}$ independently of $Z_i$ and the elements of $\Theta_{g,M}$ have disjoint supports,
    \[\Pr_{D, g \sim P_0}[g(Z_i) \neq g_0]
      \leq \frac{1}{|\Theta_{g,M}|}
      = M^{-d_Z}
      = \left( \frac{\|K_{d_Z}\|_{\C^{s_g}}}{L_g} \left( h + \frac{\sigma_X}{\sqrt{d_X}} \right) \right)^{d_Z/s_g}.\]
    Since $\epsilon_{X,i}$ has a uniform distribution with mean $0$, the function $x \mapsto \Pr[\|\sigma_X \epsilon_{X,i} + x\| \leq h]$ is maximized at $x = 0$, and so, similar to Inequality~\eqref{ineq:uniform_density},
    \begin{align*}
        \Pr_{D, g \sim P_0} \left[ \|g(Z_i) + \sigma_X \epsilon_{X,i} - x_0\|_\infty < h \middle| g(Z_i) \neq g_0 \right]
        & \leq \Pr_{D, g \sim P_0} \left[ \|\sigma_X \epsilon_{X,i}\|_\infty < h \middle| g(Z_i) \neq g_0 \right] \\
        & = \min \left\{ 1, \left( \frac{h\sqrt{d_X}}{\sigma_X} \right)^{d_X} \right\}.
    \end{align*}
    Finally, since $\sigma_X \epsilon_{X,i}$ is uniformly distributed over the cube $[-\frac{\sigma_X}{\sqrt{d_X}},\frac{\sigma_X}{\sqrt{d_X}}]^{d_X}$, by our choice (Eq.~\eqref{eq:low_noise_M}) of $M$, if $g(Z_i) = g_0$, then there is no overlap between the supports of $X_i$ and $f_1$; formally,
    \begin{align*}
        \Pr_{D, g \sim P_0} \left[ \|g(Z_i) + \sigma_X \epsilon_{X,i} - x_0\|_\infty < h \middle| g(Z_i) = g_0 \right]
        & = \left( \max \left\{0, x_0 + h - g_0 + \frac{\sigma_X}{\sqrt{d_X}}\right\} \right)^{d_X} \\
        & = \left( \max \left\{0, h + \frac{\sigma_X}{\sqrt{d_X}} - \frac{L_g}{M^{s_g} \|K_{d_Z}\|_{\C^{s_g}}} \right\} \right)^{d_X}
          = 0.
    \end{align*}
    Plugging these into Inequality~\eqref{ineq:low_noise_split_probability} gives
    \begin{align}
        \E_{D, g \sim P_0} \left[ 1\{\|X_i - x_0\|_\infty < h\} \right]
          & \leq \min \left\{ 1, \left( \frac{h\sqrt{d_X}}{\sigma_X} \right)^{d_X} \right\} \left( \frac{\|K_{d_Z}\|_{\C^{s_g}}}{L_g} \left( h + \frac{\sigma_X}{\sqrt{d_X}} \right) \right)^{d_Z/s_g},
          \label{ineq:min_split}
    \end{align}
    and plugging this into Lemma~\eqref{lemma:TV_upper_bound} gives
    \begin{align*}
        \inf_{\hat f_{x_0}} \sup_{g,f,P_Z,\epsilon_X,\epsilon_Y} & \Pr_D \left[ |\hat f_{x_0} - f(x_0)| \geq C h^{s_f} \right] \\
        & \geq \frac{1}{2} \left( 1 - C \frac{h^{s_f}}{\sigma_Y} \sqrt{n \min \left\{ 1, \left( \frac{h\sqrt{d_X}}{\sigma_X} \right)^{d_X} \right\} \left( h + \frac{\sigma_X}{\sqrt{d_X}} \right)^{d_Z/s_g}} \right),
    \end{align*}
    where $C > 0$ is a constant depending only on $d_X$, $L_f$, $s_f$, $d_Z$, $L_g$, and $s_g$.
    Plugging in the appropriate value of $h$, according to Eq.~\eqref{eq:low_noise_h}, depending on whether $\sigma_X$ is greater or less than $h/\sqrt{d_X}$, gives
    \[\inf_{\hat F} \sup_{\theta \in \Theta} \Pr_{\theta} \left[ |\hat F - F(\theta)| \geq s/2 \right]
      \geq \frac{1}{4},\]
    where
    \[s \asymp \left\{
        \begin{array}{cc}
            \left( \frac{\sigma_Y^2}{n} \right)^{\frac{s_fs_g}{2s_fs_g + d_Z}} :
                & \text{ if } \sigma_X \sqrt{d_X} \leq \left( \frac{1}{2^{2 + d_Z/s_g}} \frac{L_g^{d_Z/s_g}}{L_f^2} \frac{\|K_{d_X}\|_{\C^{s_f}}^2}{\|K_{d_Z}\|_{\C^{s_g}}^{s_f/s_g}} \frac{\sigma_Y^2}{n} \right)^{\frac{1}{2s_f + d_Z/s_g}} \\
            \left(
            \frac{\sigma_X^{d_X} \sigma_Y^2}{n} \right)^{\frac{s_f}{2s_f + d_X + d_Z/s_g}} : & \text{ else }
        \end{array}
    \right..\]
\end{proof}

Proposition~\ref{prop:large_response_noise_passive_case} provided minimax lower bounds for the passive case under response noise of scale $\sigma_Y > 0$, based on information-theoretic bounds showing that the information obtainable from a single pair $(X_i,Y_i)$ decays as $\sigma_Y^{-2}$; hence, a different approach is needed to obtain tight bounds when $\sigma_Y$ is very small or $0$. Our next results provide tight lower bounds for the case when $\sigma_Y$ is small or $0$ using a combinatorial, rather than information-theoretic, approach. In particular, we show that, given a sufficiently large family $\Theta_{g,M}$ of functions $g$ with $x \mapsto 1\{g(x) \neq g_0\}$ having disjoint supports, the probability of observing any $X_i$ with $g(X_i) \neq g_0$ is small.

\begin{proposition}[Minimax Lower Bound, Active and Passive Case, $\sigma_X$ small, $\sigma_Y$ small]
    In both active and passive cases,
    \begin{equation}
        \inf_{\hat f_{x_0}} \sup_{g,f,P_Z,\epsilon_X,\epsilon_Y}
            \Pr_D \left[ \left| \hat f_{x_0} - f(x_0) \right|
                         \geq C
                              \left( \frac{1}{n} \right)^{\frac{s_fs_g}{d_Z}}
            \right]
        \geq \frac{1}{4},
    \end{equation}
    where $C$ is a constant (given in Eq.~\eqref{ineq:lower_bound_no_noise}) depending only on $L_g$, $s_g$, $L_f$, $s_f$, $d_X$, and $d_Z$.
    \label{prop:lower_bound_no_noise}
\end{proposition}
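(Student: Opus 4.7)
The plan is to apply the method of two hypotheses (Le Cam), taking the worst case $\sigma_X = \sigma_Y = 0$ (allowed since the supremum is over the noise scales as well), and combining the family $\Theta_{g,M}$ from Lemma~\ref{lemma:Theta_g} with a pair of response functions that agree at $g_0$ but differ at $x_0$.

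First, fix $M := \lceil (2n)^{1/d_Z} \rceil$, so that $|\Theta_{g,M}| = M^{d_Z} \geq 2n$ and, by Lemma~\ref{lemma:Theta_g}(3a), $\|g_0 - x_0\| \asymp M^{-s_g} \asymp n^{-s_g/d_Z}$. Pick $h$ proportional to $\|g_0 - x_0\|$ but small enough that the closed $\ell_2$-ball of radius $h$ around $x_0$ excludes $g_0$, and define $f_0 \equiv 0$ and $f_1(x) := \frac{L_f h^{s_f}}{\|K_{d_X}\|_{\C^{s_f}}} K_{d_X}\bigl(\tfrac{x - x_0}{h}\bigr)$, with $K_{d_X}$ the bump from Eq.~\eqref{eq:bump_function}. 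Then $f_0, f_1 \in \C^{s_f}(\X; \Y; L_f)$, both vanish at $g_0$, yet $|f_0(x_0) - f_1(x_0)| \asymp h^{s_f} \asymp n^{-s_f s_g / d_Z}$.

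Second, consider the Bayesian setup with independent priors $g \sim \mathrm{Unif}(\Theta_{g,M})$ and $f \sim \mathrm{Unif}(\{f_0, f_1\})$. For each $m \in [M]^{d_Z}$, let $B_m := \{z \in \Z : g_m(z) \neq g_0\}$; by Lemma~\ref{lemma:Theta_g}(3b), the sets $\{B_m\}_{m \in [M]^{d_Z}}$ are pairwise disjoint. Since $\sigma_X = \sigma_Y = 0$, whenever $Z_i \notin B_m$ we have $X_i = g_0$ and $Y_i = f(g_0) = 0$ deterministically; hence on the event $E := \{\forall i \in [n] : Z_i \notin B_m\}$, the law of the full dataset is identical under $f = f_0$ and under $f = f_1$.

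Third, I bound $\Pr[E^c] \leq n/M^{d_Z} \leq 1/2$ uniformly over active and passive learners. Conditional on $\{Z_1,\ldots,Z_{i-1} \notin B_m\}$, the history $H_{i-1}$ is a function of the learner's internal randomisation only (every $X_j$ and $Y_j$ is constant), so its conditional law does not depend on $m$. Hence
\[\sum_{m} \Pr_m\!\left[Z_i \in B_m,\; Z_1,\ldots,Z_{i-1} \notin B_m \right] \leq \E_{H_{i-1}}\!\left[\sum_m \Pr[Z_i \in B_m \mid H_{i-1}] \right] \leq 1,\]
using the disjointness of the $B_m$'s. Summing over $i \in [n]$ and dividing by $M^{d_Z}$ gives $\E_m \Pr_m[E^c] \leq n / M^{d_Z}$, so the same bound holds for the Bayesian mixture of data distributions.

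Fourth, on $E$ the estimator $\hat f_{x_0}$ is independent of $f$, so a standard two-point argument yields $\Pr[|\hat f_{x_0} - f(x_0)| \geq \tfrac{1}{2}|f_0(x_0) - f_1(x_0)| \mid E] \geq 1/2$. Combining with $\Pr[E] \geq 1/2$ gives a Bayesian error probability $\geq 1/4$, and hence some $(g_m, f_\beta)$ in the supremum attains the stated rate $n^{-s_f s_g / d_Z}$.

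The main obstacle is the combinatorial bound in the third paragraph: we must rule out the possibility that an adaptive learner can search the $M^{d_Z}$ candidate bumps faster than one per query. This is where the symmetry of the construction and the total absence of informative data under the event $E$ (which forces the learner's posterior over $m$ to remain uniform on the un-queried bumps) are essential; the same argument works for both active and passive learners, giving a single lower bound covering both cases.
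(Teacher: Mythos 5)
Your proof is correct and follows essentially the same route as the paper: the same family $\Theta_{g,M}$ from Lemma~\ref{lemma:Theta_g} together with the bump pair $f_0,f_1$, and the same disjoint-support counting showing that any learner (active or passive) can probe at most $n$ of the $M^{d_Z}\geq 2n$ bumps, so that with constant probability the data are uninformative about $f$; your Bayesian-mixture formulation with the explicit sequential step is just a more careful packaging of the paper's counting argument (and in fact yields the stated constant $\tfrac14$, whereas the paper's looser count gives $2^{-d_Z-2}$). One small correction: the supremum is over the noise \emph{distributions} $\epsilon_X,\epsilon_Y$, not over the scales $\sigma_X,\sigma_Y$, so justify the noiseless reduction by taking $\epsilon_X$ and $\epsilon_Y$ to be point masses at $0$, which do satisfy A3) and A5) under Definitions~\ref{def:local_assumption} and~\ref{def:sub_gaussain} (the liminf defining local dimension is then infinite).
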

We note that, in contrast to other the other lower bounds in this section, Proposition~\ref{prop:lower_bound_no_noise} applies to to both the active and passive settings, as its proof makes no assumptions on the distribution of the control variables $Z_1,...,Z_n$.

\begin{proof}
    Let $\Theta_{g,M}$ be as given in Lemma~\ref{lemma:Theta_g}, with $M := \lceil (2n)^{1/d_Z} \rceil$. Additionally, let $f_0$ and $f_1$ be as in Eq.~\eqref{eq:f0_and_f1}.
    For each $g \in \Theta_{g,M}$, let $E_g$ denote the event $g(Z_1) = g(Z_2) = \cdots = g(Z_n) = g_0$. Since, for any $g_1 \neq g_2 \in \Theta_{g,M}$, the functions $z \mapsto 1\{g_1(z) \neq g_0\}$ and $z \mapsto 1\{g_2(z) \neq g_0\}$ have disjoint support, for any $Z_1,...,Z_n$,
    \[\sum_{g \in \Theta_{g,M}} 1 - 1_{E_g} \leq n.\]
    Since, by our choice of $M$, $|\Theta_{g,M}| = M^{d_Z} \geq 2n$, for any (potentially randomized) estimator $\hat f_{x_0}$,
    \begin{align*}
        |\Theta_{g,M}| \max_{g \in \Theta_{g,M}} \Pr[E_g]
        & \geq \sum_{g \in \Theta_{g,M}} \Pr[E_g] \\
        & = \E \left[ \sum_{g \in \Theta_{g,M}} 1_{E_g} \right] \\
        & = \E \left[ \sum_{g \in \Theta_{g,M}} 1 - (1 - 1_{E_g}) \right]
      \geq |\Theta_{g,M}| - n \geq n,
    \end{align*}
    and so there exists $g_{\hat f_{x_0}} \in \Theta_{g,M}$ such that
    \[\Pr \left[ E_{g_{\hat f_{x_0}}} \middle| g = g_{\hat f_{x_0}} \right]
      \geq \frac{n}{|\Theta_{g,M}|}
      \geq \frac{n}{((2n)^{1/d_Z} + 1)^{d_Z}}
      \geq 2^{-d_Z - 1}.\]
    For $h := \|x_0 - g_0\|_\infty = \frac{L_g}{M^{s_g} \|K_{d_Z}\|_{\C^{s_g}}}$, under the event $E_g$, the distribution of the data $\{(X_i, Y_i, Z_i)\}_{i = 1}^n$ is independent of whether $f = f_0$ or $f = f_1$. In particular, either
    \[\Pr \left[ \hat f_{x_0} \geq \frac{f_0(x_0) + f_1(x_0)}{2} \middle| f = f_0 \right] \geq \frac{1}{2}
        \quad \text{ or } \quad
        \Pr \left[\hat f_{x_0} \leq \frac{f_0(x_0) + f_1(x_0)}{2} \middle| f = f_1\right]
        \geq \frac{1}{2},\]
    and so
    \[\max_{g \in \Theta_{g,M}, f \in \{f_0,f_1\}} \Pr \left[ \left| \hat f_{x_0} - f(x_0) \right|       \geq \frac{|f_1(x_0) - f_0(x)|}{2} \right]
        \geq \frac{1}{2} \Pr[E_{g_{\hat f_{x_0}}}]
        \geq 2^{-d_Z - 2}.\]
    By construction of $f_0$ and $f_1$,
    \[|f_1(x_0) - f_0(x_0)| = \frac{L_f h^{s_f}}{\|K_{d_X}\|_{\C^{s_f}}},\]
    and so plugging in our choices of $M$ and $h$ gives
    \begin{equation}
        \max_{g \in \Theta_{g,M}, f \in \{f_0,f_1\}} \Pr \left[ \left| \hat f_{x_0} - f(x_0) \right|       \geq \frac{L_f L_g^{s_f}}{2\|K_{d_X}\|_{\C^{s_f}} \|K_{d_Z}\|_{\C^{s_g}}^{s_f} \left( (2n)^{s_fs_g/d_Z} + 1 \right)} \right]
        \geq 2^{-d_Z - 2}.
        \label{ineq:lower_bound_no_noise}
    \end{equation}
\end{proof}

\begin{proposition}[Minimax Lower Bound, Passive Case, $\sigma_X$ large, $\sigma_Y$ small]
    In the passive case, for some constant $C > 0$, whenever $n \geq C \sigma_X^{-d_Z/s_g}$,
    \begin{equation}
        \inf_{\hat f_{x_0}} \sup_{g,f,P_Z,\epsilon_X,\epsilon_Y}
            \Pr_D \left[ \left| \hat f_{x_0} - f(x_0) \right|
                         \geq C
                              \left( \frac{\sigma_X^{d_X}}{n} \right)^{\frac{s_f}{d_X + d_Z/s_g}}
            \right]
        \geq \frac{1}{2e} > 0,
    \end{equation}
    where $C > 0$ (given below in Eq.~\eqref{eq:passive_lower_bound_no_response_noise_constant}) depends only on $L_g$, $s_g$, $L_f$, $s_f$, $d_X$, and $d_Z$.
    \label{prop:passive_lower_bound_no_response_noise}
\end{proposition}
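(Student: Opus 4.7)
My plan is to reduce to a Le Cam-style two-point testing problem. Because $\sigma_Y$ does not appear in the target rate, the Pinsker/KL approach used in Proposition~\ref{prop:large_response_noise_passive_case} is wasteful here; instead, I will work with a deterministic event under which the two hypotheses generate identical data. The key qualitative observation is that the hardest instance at fixed $\sigma_X$ is one in which the distribution of $X$ around $x_0$ realizes the \emph{full} local dimension $d_X + d_Z/s_g$ permitted by Assumptions A2)--A4), with density of order $1/\sigma_X^{d_X}$. A constant $g$ would drop the effective dimension to $d_X$ and yield only the weaker $\left(\sigma_X^{d_X}/n\right)^{s_f/d_X}$ bound, so I must construct a $g$ that genuinely spreads mass along a $(d_Z/s_g)$-dimensional set through $x_0$.

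Concretely, working in $\X = \R^{d_Z + d_X}$ with $z_0 = 0$ and $x_0 = 0$, I would take $P_Z$ uniform on $[0,1]^{d_Z}$, $g(z) = (L_g z_1^{s_g}, \ldots, L_g z_{d_Z}^{s_g}, 0, \ldots, 0)$, $\epsilon_X = (0_{d_Z}, U)$ with $U$ uniform on $[-1/\sqrt{d_X}, 1/\sqrt{d_X}]^{d_X}$, and $\epsilon_Y$ any sub-Gaussian law. Using $|a^{s_g} - b^{s_g}| \leq |a - b|^{s_g}$ for $a, b \geq 0$ and $s_g \in (0,1]$, one checks $g \in \C^{s_g}(\Z, z_0; \X; L_g)$, and that $P_Z$ and $\epsilon_X$ have the required local dimensions. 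By the product structure, for $h \leq \min\{\sigma_X/\sqrt{d_X}, L_g\}$,
\[\Pr[\|X_i - x_0\|_\infty \leq h] = (h/L_g)^{d_Z/s_g} (h\sqrt{d_X}/\sigma_X)^{d_X} = C' h^{d_X + d_Z/s_g}/\sigma_X^{d_X}.\]
I pick $h$ so that this probability equals $1/(2n)$, giving $h \asymp (\sigma_X^{d_X}/n)^{1/(d_X + d_Z/s_g)}$; the hypothesis $n \geq C \sigma_X^{-d_Z/s_g}$ is precisely what guarantees $h \leq \sigma_X/\sqrt{d_X}$. Independence of the samples and the bound $(1-p)^n \geq e^{-np/(1-p)}$ with $p = 1/(2n)$ then give $\Pr[E] \geq 1/e$ for the event $E = \{X_i \notin [-h,h]^{d_Z + d_X} \text{ for all } i \in [n]\}$.

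The last step is a standard two-point reduction. Take $f_0 \equiv 0$ and $f_1(x) = (L_f h^{s_f}/\|K_{d_X + d_Z}\|_{\C^{s_f}}) K_{d_X + d_Z}((x-x_0)/h)$ for the standard bump $K_{d_X + d_Z}$ from Eq.~\eqref{eq:bump_function}; both lie in $\C^{s_f}(\X, x_0; \R; L_f)$ and $f_1$ vanishes outside $[-h,h]^{d_X + d_Z}$. On $E$, $f_0(X_i) = f_1(X_i) = 0$ for every $i$, so the joint law of the data is identical under the two hypotheses. The standard Le Cam argument then yields, for any estimator $\hat f_{x_0}$ and $\Delta = f_1(x_0) - f_0(x_0) \asymp L_f h^{s_f}$,
\[\max_{j \in \{0,1\}} \Pr_{f_j}\left[ \left| \hat f_{x_0} - f_j(x_0) \right| \geq \Delta/2 \right] \geq \tfrac{1}{2} \Pr[E] \geq 1/(2e),\]
which is the bound claimed in the proposition.

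The main pitfall, and what distinguishes this argument from those of Propositions~\ref{prop:large_response_noise_passive_case} and~\ref{prop:lower_bound_no_noise}, is the construction of $g$. Reusing the single-bump family $\Theta_{g,M}$ from Lemma~\ref{lemma:Theta_g} yields a density of $X$ near $x_0$ of order $M^{-d_Z}(h/\sigma_X)^{d_X}$, and combined with the constraint $M^{-s_g} \gtrsim \sigma_X$ (needed to prevent uniform noise from reaching $x_0$ when $g(Z) = g_0$) this produces only $h \lesssim (\sigma_X^{d_X - d_Z/s_g}/n)^{1/d_X}$, missing the target exponent. A genuinely $(d_Z/s_g)$-dimensional image of $g$, together with independent product noise in the orthogonal $d_X$ directions, is what realizes the tight $h^{d_X + d_Z/s_g}/\sigma_X^{d_X}$ density scaling; the rest of the proof is then essentially automatic.
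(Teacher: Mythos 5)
Your proposal is correct and follows essentially the same route as the paper's proof: a Le Cam two-point argument with $f_0\equiv 0$ and a bump $f_1$ of width $h \asymp (\sigma_X^{d_X}/n)^{1/(d_X+d_Z/s_g)}$, a hard instance in which $X$ has local dimension $d_X+d_Z/s_g$ around $x_0$ (uniform $P_Z$, a H\"older map $g$ spreading mass in directions orthogonal to uniform cube noise of scale $\sigma_X$), and the event that no $X_i$ falls within $h$ of $x_0$, which has probability at least $1/e$ by an IID calculation. The only cosmetic difference is the embedding: the paper uses $g(z)=L_g\|z\|_\infty^{s_g}e_1$ in $\R^{d_X+1}$ so the $d_Z/s_g$ dimension arises via the pushforward of $\|Z\|_\infty$, whereas you use a coordinatewise power map into $\R^{d_Z+d_X}$; both yield the same small-ball scaling and the same $1/(2e)$ bound.
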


We note that the condition $n \gtrsim \sigma_X^{-d_Z/s_g}$ is precisely when the rate $\left( \sigma_X^{d_X}/n \right)^{\frac{s_f}{d_X + d_Z/s_g}}$ in Proposition~\ref{prop:passive_lower_bound_no_response_noise} dominates the rate $n^{\frac{-s_fs_g}{d_Z}}$ obtained earlier in Proposition~\ref{prop:lower_bound_no_noise}; hence, this condition can be omitted when we take the maximum of these lower bounds in Theorem~\ref{thm:passive_lower_bound}.

\begin{proof}
    Suppose $Z$ is uniformly distributed over $\Z$ and $\sigma_X \epsilon_{X,i}$ is uniformly distributed on the set $\{0\} \times [-\frac{\sigma_X}{\sqrt{d_X}},\frac{\sigma_X}{\sqrt{d_X}}]^{d_X} \subseteq \R^{d_X + 1}$ (i.e., the first coordinate of $\epsilon_{X_i}$ is always $0$ and the remaining $d_X$ coordinates are uniformly random). Define $g : \Z \to \X$ by $g(z) = L_g \|z\|_\infty^{s_g} e_1$, where $e_1 \in \R^{d_X+1}$ denotes the first canonical basis vector. It is straightforward to verify that (a) $Z$ has dimension $d_Z$ around $0$, (b) $\epsilon_{X,i}$ is sub-Gaussian, (c) $\epsilon_{X,i}$ has dimension $d_X$ around $0$, and (d) $g \in \C^{s_g}(\Z; \X)$. Let $f_0$ and $f_1$ be as in Eq.~\eqref{eq:f0_and_f1} with $x_0 = 0$.
    
    Let $r^* := \min \{L_g, \frac{\sigma_X}{\sqrt{d_X}}\}$. Then, for each $i \in [n]$ and any $r \in [0, r^*]$, since $Z_i$ and $\epsilon_{X,i}$ are independent,
    \begin{align*}
        \Pr \left[ \|X_i\|_\infty \leq r \right]
        & = \Pr \left[ \max\{\|g(Z_i)\|, \|\epsilon_i\|_\infty\} \leq r \right] \\
        & = \Pr \left[ \|Z_i\|_\infty \leq (r/L_g)^{1/s_g} \right] \Pr \left[ \|\epsilon_i\|_\infty \leq r \right] \\
        & = \left( \frac{r}{L_g} \right)^{d_Z/s_g} \left( \frac{r \sqrt{d_X}}{\sigma_X} \right)^{d_X}.
    \end{align*}
    Since, in the passive case, $X_1,...,X_n$ are IID,
    \begin{align*}
        \Pr \left[ \min_{i \in [n]} \|X_i\|_\infty \geq r \right]
        & = \left( \Pr \left[ \|X_i\|_\infty \geq r \right] \right)^n \\
        & = \left( 1 - \Pr \left[ \|X_i\|_\infty \leq r \right] \right)^n \\
        & = \left( 1 - \left( \frac{r}{L_g} \right)^{d_Z/s_g} \left( \frac{r \sqrt{d_X}}{\sigma_X} \right)^{d_X} \right)^n.
    \end{align*}
    For $n$ sufficiently large that $r \leq r^*$ (i.e., $n \geq C \sigma_X^{-d_Z/s_g}$, as assumed), plugging in
    \begin{equation}
        r = \left( \left( \frac{\sigma_X}{\sqrt{d_X}} \right)^{d_X} \frac{L_g^{d_Z/s_g}}{n} \right)^{\frac{1}{d_X + d_Z/s_g}}
        \label{eq:min_nearest_neighbor_radius_passive_case}
    \end{equation}
    gives
    \[\Pr \left[ \min_{i \in [n]} \|X_i\|_\infty \geq \left( \left( \frac{\sigma_X}{\sqrt{d_X}} \right)^{d_X} \frac{L_g^{d_Z/s_g}}{n} \right)^{\frac{1}{d_X + d_Z/s_g}} \right]
        = \left( 1 - \frac{1}{n} \right)^n
        \geq 1/e > 0,\]
    since $\left( 1 - \frac{1}{n} \right)^n$ approaches $1/e$ from above as $n \to \infty$.
    
    Now let $f_0$ and $f_1$ be as in Eq.~\eqref{eq:f0_and_f1}, with $h = r$ above. Given the event $\min_{i \in [n]} \|X_i\|_\infty \geq h$, the distribution of the data $\{(X_i,Y_i,Z_i)\}_{i = 1}^n$ is independent of whether $f = f_0$ or $f = f_1$. Hence, as argued in the proof of the previous Proposition~\eqref{prop:lower_bound_no_noise},
    \[\max_{f \in \{f_0,f_1\}} \Pr \left[ \left| \hat f_{x_0} - f(x_0) \right| \geq \frac{|f_1(x_0) - f_0(x)|}{2} \right]
        \geq \frac{1}{2} \Pr \left[ \min_{i \in [n]} \|X_i\|_\infty \leq h \right]
        \geq \frac{1}{2e}.\]
    Since, by construction of $f_0$ and $f_1$,
    \[|f_1(x_0) - f_0(x_0)| = \frac{L_f h^{s_f}}{\|K_{d_X}\|_{\C^{s_f}}},\]
    it follows that
    \[\max_{f \in \{f_0,f_1\}} \Pr \left[ \left| \hat f_{x_0} - f(x_0) \right| \geq \frac{L_f h^{s_f}}{2\|K_{d_X}\|_{\C^{s_f}}} \right]
        \geq \frac{1}{2e}.\]
    Plugging in our choice of $h = r$ from Eq~\eqref{eq:min_nearest_neighbor_radius_passive_case} above gives
    \[\max_{f \in \{f_0,f_1\}} \Pr \left[ \left| \hat f_{x_0} - f(x_0) \right| \geq C \left( \frac{\sigma_X}{n} \right)^{\frac{s_f}{d_X + d_Z/s_g}} \right]
        \geq \frac{1}{2e},\]
    where
    \begin{equation}
        C = \frac{L_f \left( d_X^{-d_X/2} L_g^{d_Z/s_g} \right)^{\frac{s_f}{d_X + d_Z/s_g}}}{2\|K_{d_X}\|_{\C^{s_f}}}.
        \label{eq:passive_lower_bound_no_response_noise_constant}
    \end{equation}
\end{proof}

\section{Upper Bound for the Active Case (Theorem~\ref{thm:active_upper_bound})}
\label{app:active_upper_bound}

In this section, we state and prove our main upper bound for the two-stage active learner presented in Algorithm~\ref{alg:active_learning_algorithm}. We first state the full result:
\begin{customthm}{\ref{thm:active_upper_bound}}[Main Active Upper Bound]
    Let $\delta \in (0, 1)$ Let $\hat f_{x_0}$ denote the estimator proposed in Algorithm~\ref{alg:active_learning_algorithm}, with $k, \ell \geq 4 \log \frac{6}{\delta}$. Then, for some constant $C > 0$ depending only on $L_g$, $s_g$, $L_f$, $s_f$, $d_X$, and $d_Z$, with probability at least $1 - \delta$,
    \[\left| \hat f_{x_0,k} - f(x_0) \right|
        \leq C_4 \left( \sigma_Y \sqrt{\frac{1}{k} \log \frac{1}{\delta}}
        + \sigma_X^{s_f} \left( \frac{k}{n} \right)^{\frac{s_f}{d_X}}
        + \left( \frac{\ell}{n} \right)^{\frac{s_fs_g}{d_Z}} + \left( \sigma_X \sqrt{\frac{1}{\ell} \log \frac{n}{\delta}} \right)^{s_f} \right),\]
    In particular, setting
    \begin{equation}
        k \asymp \max \left\{
            \log \frac{1}{\delta},
            \min \left\{
                n,
                \left( \frac{\sigma_Y}{\sigma_X^{s_f}} \right)^{\frac{2d_X}{2s_f + d_X}} n^{\frac{2s_f}{2s_f + d_X}}
            \right\}
        \right\}
        \label{eq:active_k}
    \end{equation}
    and
    \begin{equation}
        \ell \asymp \max \left\{
            \log \frac{1}{\delta},
            \min \left\{
                n,
                \sigma_X^{\frac{2d_Z}{2s_g + d_Z}} n^{\frac{2s_g}{2s_g + d_Z}} (\log n)^{\frac{d_Z}{2s_g + d_Z}}
            \right\}
        \right\},
        \label{eq:active_ell}
    \end{equation}
    we have, with probability at least $1 - \delta$, $\left| \hat f_{x_0} - f(x_0) \right| \leq C \Phi_A^* \log \frac{1}{\delta}$, where
    \[\Phi_A^* := \max \left\{ \left( \frac{1}{n} \right)^{\frac{s_fs_g}{d_Z}},
        \left( \frac{\sigma_X^{d_X}}{n} \right)^{\frac{s_f}{d_X}},
        \frac{\sigma_Y}{\sqrt{n}},
        \left( \frac{\sigma_Y^2 \sigma_X^{d_X}}{n} \right)^{\frac{s_f}{2s_f + d_X}},
        \left( \frac{\sigma_X^2 \log n}{n} \right)^{\frac{s_fs_g}{2s_g + d_Z}}
    \right\}.\]
    \label{thm:active_upper_bound_appendix}
\end{customthm}

We now prove the result, utilizing several lemmas and key steps from the Passive upper bound in Appendix~\ref{app:passive_upper_bound}.

\begin{proof}
    As in the proof of the Passive Upper Bound (Theorem~\ref{thm:passive_upper_bound}), for any $X_1,...,X_n \in \X$, let
    \[\tilde f_{x_0,X_1,...,X_n,k} = \frac{1}{k} \sum_{i = 1}^k f(X_{\pi_i(x_0; \{X_1,...,X_n\})})\]
    denote the conditional expectation of $\hat f_{x_0,k}$ given $X_1,...,X_n$.
    By the triangle inequality,
    \begin{equation}
        \left| \hat f_{x_0,k} - f(x_0) \right|
          \leq \left| \hat f_{x_0,k} - \tilde f_{x_0,X_1,...,X_n,k} \right| + \left| \tilde f_{x_0,X_1,...,X_n,k} - f(x_0) \right|,
        \label{ineq:active_bias_variance_decomposition}
    \end{equation}
    where the first term captures stochastic error due to the response noise $\epsilon_Y$ and the second term captures smoothing bias.
    
    For the first term in Eq.~\eqref{ineq:active_bias_variance_decomposition}, as argued in the passive case (see Eq.~\eqref{ineq:active_upper_variance_bound}), since the response noise $\epsilon_Y$ is sub-Gaussian, with probability at least $1 - \delta/2$,
    \begin{equation}
        \left| \hat f_{x_0,k} - \tilde f_{x_0,X_1,...,X_n,k} \right|
        \leq \sigma_Y \sqrt{\frac{2}{k} \log \frac{4}{\delta}}.
        \label{ineq:active_upper_variance_bound}
    \end{equation}
    For the second term in Eq.~\eqref{ineq:active_bias_variance_decomposition}, by the triangle inequality and local H\"older continuity of $f$ around $x_0$, for some constant $C_1 > 0$,
    \begin{align}
        \notag
        \left| \tilde f_{x_0,X_1,...,X_n,k} - f(x_0) \right|
        & \leq \frac{1}{k} \sum_{i = 1}^k \left| f(X_{\pi_i(x_0; \{X_1,...,X_n\})}) - f(x_0) \right| \\
        \notag
        & \leq \frac{C_1}{k} \sum_{i = 1}^k \left\| X_{\pi_i(x_0; \{X_1,...,X_n\})} - x_0 \right\|^{s_f} \\
        \label{ineq:general_bias_smoothness_bound}
        & \leq C_1 \left\| X_{\pi_k(x_0; \{X_1,...,X_n\})} - x_0 \right\|^{s_f}.
    \end{align}
    
    By Lemmas~\ref{lemma:knn_distance} and \ref{lemma:pushforward_dimension}, for some constant $C_2 > 0$, with probability at least $1 - \delta/6$, if $\ell \geq 4 \log \frac{6}{\delta}$,
    \begin{equation}
        \|g(Z_{\pi_\ell(x_0, \{g(Z_1),...,g(Z_i)\})}) - x_0\| \leq C_2 \left( \frac{\ell}{m} \right)^{s_g/d_Z},
        \label{ineq:pilot_knn_distance}
    \end{equation}
    where $m = \lfloor n/2 \rfloor$ as in Algorithm~\ref{alg:active_learning_algorithm}.
    Meanwhile, by a standard maximal inequality for sub-Gaussian random variables (Lemma~\ref{lemma:sub_gaussian_maximal_inequality} in Appendix~\ref{app:supplementary_lemmas}), with probability at least $1 - \delta/6$,
    \begin{equation}
        \max_{i \in [\lfloor m/\ell \rfloor]} \left\| g(Z_i) - \bar X_i \right\| \leq \sigma_X \sqrt{\frac{2}{\ell} \log \frac{12m}{\delta}}.
        \label{ineq:maximal_pilot_error}
    \end{equation}
    By Inequality~\ref{ineq:maximal_pilot_error} and Inequality~\ref{ineq:pilot_knn_distance}, with probability at least $1 - 2\delta/6$,
    \begin{align*}
        \left\| g(Z_{\pi_\ell(x_0, \{\bar X_1,...,\bar X_i\})}) - x_0 \right\|
        & \leq \left\| \bar X_{\pi_\ell(x_0, \{\bar X_1,...,\bar X_{\lfloor m/\ell \rfloor}\})} - x_0 \right\| + \sigma_X \sqrt{\frac{2}{\ell} \log \frac{8m}{\delta}} \\
        & \leq \left\| \bar X_{\pi_\ell(x_0, \{\bar X_1,...,\bar X_{\lfloor m/\ell \rfloor}\})} - x_0 \right\| + \sigma_X \sqrt{\frac{2}{\ell} \log \frac{8m}{\delta}} \\
        & \leq C_2 \left( \frac{\ell}{m} \right)^{s_g/d_Z} + \sigma_X \sqrt{\frac{2}{\ell} \log \frac{8m}{\delta}}.
    \end{align*}
    Since $Z_{m+1} = \cdots = Z_n = Z_{\pi_\ell(x_0, \{\bar X_1,...,\bar X_{\lfloor m/\ell \rfloor}\})}$, by Lemma~\ref{lemma:knn_distance}, for some constant $C_3 > 0$, with probability at least $1 - \delta/6$, if $k \geq 4 \log \frac{6}{\delta}$,
    \[\left\| X_{\pi_k(x; \{X_{m+1},...,X_n\})} - g(Z_{\pi_\ell(x_0, \{\bar X_1,...,\bar X_{\lfloor m/\ell \rfloor}\})}) \right\|
      \leq C_3 \sigma_X \left( \frac{k}{n - m} \right)^{1/d_X},\]
    and it follows from the triangle inequality and the preceding inequality, with probability at least $1 - \delta/2$,
    \begin{align*}
        & \left\| X_{\pi_k(x_0; \{X_1,...,X_n\})} - x_0 \right\| \\
        & \leq \left\| X_{\pi_k(g(Z_{\pi_\ell(x_0, \{\bar X_1,...,\bar X_{\lfloor m/\ell \rfloor}\})}); \{X_{m+1},...,X_n\})} - g(Z_{\pi_\ell(x_0, \{\bar X_1,...,\bar X_{\lfloor m/\ell \rfloor}\})}) \right\| \\
        & + \|g(Z_{\pi_\ell(x_0, \{\bar X_1,...,\bar X_{\lfloor m/\ell \rfloor}\})}) - x_0\| \\
        & \leq C_3 \sigma_X \left( \frac{k}{n - m} \right)^{1/d_X}
          + C_2 \left( \frac{\ell}{m} \right)^{s_g/d_Z} + \sigma_X \sqrt{\frac{2}{\ell} \log \frac{12m}{\delta}}.
    \end{align*}
    Combining this with inequalities~\eqref{ineq:general_bias_smoothness_bound}, \eqref{ineq:active_upper_variance_bound}, and \eqref{ineq:active_bias_variance_decomposition} gives, with probability at least $1 - \delta$, as long as $k, \ell \geq 4 \log \frac{6}{\delta}$,
    \[\left| \hat f_{x_0,k} - f(x_0) \right|
        \leq C_4 \left( \sigma_Y \sqrt{\frac{1}{k} \log \frac{1}{\delta}}
        + \left( \sigma_X^{d_X} \frac{k}{n} \right)^{\frac{s_f}{d_X}}
        + \left( \frac{\ell}{n} \right)^{\frac{s_fs_g}{d_Z}} + \left( \sigma_X \sqrt{\frac{1}{\ell} \log \frac{n}{\delta}} \right)^{s_f} \right),\]
    for some constant $C_4 > 0$, since $m = \lfloor n/2 \rfloor$.
\end{proof}

\section{Lower Bounds for the Active Case (Theorem~\ref{thm:active_lower_bound})}
\label{app:active_lower_bounds}

In this section, we state and prove our main lower bounds for the Active case. As in the Passive case, our lower bounds will be given under global H\"older continuity (see Definition~\ref{def:global_holder}). We first state the main (overall) bound:

\begin{customthm}{\ref{thm:active_lower_bound}}[Main Active Lower Bound]
    In the active case, there exist constants $C, c > 0$, depending only on $L_g$, $s_g$, $L_f$, $s_f$, $d_X$, and $d_Z$, such that, for any estimator $\hat f_{x_0}$
    \[\inf_{\hat f_{x_0}} \sup_{g,f,P_Z,\epsilon_X,\epsilon_Y}
            \Pr_D \left[ \left| \hat f_{x_0} - f(x_0) \right| \geq C \Phi_{A,*} \right]
        \geq c,\]
    where
    \[\Phi_{A,*}
      = \max \left\{ \left( \frac{1}{n} \right)^{\frac{s_fs_g}{d_Z}},
        \left( \frac{\sigma_X^{d_X}}{n} \right)^{s_f/d_X},
        \frac{\sigma_Y}{\sqrt{n}},
        \left( \frac{\sigma_X^{d_X} \sigma_Y^2}{n} \right)^{\frac{s_f}{2s_f + d_X}} \right\}.\]
    \label{thm:active_lower_bound_app}
\end{customthm}

Since this lower bound is the maximum of four terms, it suffices to prove each of these four lower bounds separately. Recall that the $\asymp n^{-s_fs_g/d_Z}$ term was proven in Proposition~\ref{prop:lower_bound_no_noise}, which holds in both the Passive and Active cases. The remaining three lower bounds are prove in the the following three Propositions~\ref{prop:active_lower_bound_high_noise_case}, \ref{prop:active_lower_bound_no_response_noise}, and \ref{prop:active_lower_bound_no_covariate_noise}.

\begin{proposition}[Minimax Lower Bound, Active Case, $\sigma_X$ large, $\sigma_Y$ large]
    In the active case,
    \begin{equation}
        \inf_{\hat f_{x_0}} \sup_{g,f,P_Z,\epsilon_X,\epsilon_Y}
            \Pr_D \left[ \left| \hat f_{x_0} - f(x_0) \right|
                         \geq C \left( \frac{\sigma_X^{d_X} \sigma_Y^2}{n} \right)^{\frac{s_f}{2s_f + d_X}}
            \right]
        \geq \frac{1}{4},
    \end{equation}
    where $C > 0$ (given in Eq.~\eqref{eq:lower_bound_high_noise_constant}) depends only on $L_f$, $s_f$, and $d_X$.
    \label{prop:active_lower_bound_high_noise_case}
\end{proposition}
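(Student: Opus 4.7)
The plan is to reduce the active problem to a passive one by choosing the worst-case $g$ to be constant, so that the learner's ability to choose $Z_i$ gives no informational advantage whatsoever. Specifically, I would take $g \equiv x_0$, which trivially lies in $\C^{s_g}(\Z;\X;L_g)$ (its seminorm is $0$), and let $P_Z$ be uniform on $[0,1]^{d_Z}$ (which has local dimension $d_Z$ around any point). Under this $g$, for every choice of control $Z_i$ (adaptive or not) we simply have $X_i = x_0 + \sigma_X \epsilon_{X,i}$, so the observations $(X_i, Y_i)$ are IID regardless of the learner's policy. This collapses the active setting into the standard problem of estimating $f(x_0)$ from IID samples with covariate-noise scale $\sigma_X$ and response-noise scale $\sigma_Y$.

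Second, I would invoke Lemma~\ref{lemma:TV_upper_bound} (which, crucially, was already stated for adaptive data-collection policies), taking $P_g$ to be a point mass on the constant $g \equiv x_0$ and choosing $\sigma_X \epsilon_X$ to be uniform on $[-\sigma_X/\sqrt{d_X}, \sigma_X/\sqrt{d_X}]^{d_X}$ (sub-Gaussian because bounded, and with local dimension $d_X$ around $0$). For any $h \leq \sigma_X/\sqrt{d_X}$, this gives
\[
\E_{D, g \sim P_0}\!\left[1\{\|X_i - x_0\|_\infty < h\}\right] = \left(\frac{h\sqrt{d_X}}{\sigma_X}\right)^{\!d_X},
\]
uniformly in $i$ and in the (adaptive) policy. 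Plugging this into Lemma~\ref{lemma:TV_upper_bound} yields a lower bound
\[
\frac{1}{2}\left(1 - C \frac{L_f h^{s_f}}{\sigma_Y} \sqrt{n \left(\frac{h\sqrt{d_X}}{\sigma_X}\right)^{\!d_X}}\right).
\]

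Third, I would choose $h \asymp (\sigma_Y^2 \sigma_X^{d_X}/n)^{1/(2s_f + d_X)}$ to balance the term inside the square root to a constant $\leq 1/2$. One must check that this $h$ indeed satisfies $h \leq \sigma_X/\sqrt{d_X}$; in the regime where the claimed rate dominates the no-noise rate $n^{-s_f/d_X} \sigma_X^{s_f}$, this holds automatically (otherwise Proposition~\ref{prop:lower_bound_no_noise} already supplies the dominant bound and there is nothing to prove). Then the reduction distance $s = L_f h^{s_f}/\|K_{d_X}\|_{\C^{s_f}}$ gives the quoted rate $s \asymp (\sigma_X^{d_X}\sigma_Y^2/n)^{s_f/(2s_f + d_X)}$, with probability $\geq 1/4$.

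The main obstacle is really just checking that the adaptive nature of the active setting does not sneak in any improvement in the expectation $\E_{D,g\sim P_0}[1\{\|X_i - x_0\|_\infty < h\}]$. This is actually automatic here, because the pushforward distribution of $X_i$ under a constant $g$ does not depend on $Z_i$ or on the history, so the sum telescopes to $n$ times a fixed probability regardless of the learner's policy. This is the reason for picking a constant $g$ rather than a richer family as in Proposition~\ref{prop:large_response_noise_passive_case}: the family-based argument used in the passive case depended on $Z_i$ being independent of $g$, which fails under adaptive sampling, and we circumvent this by removing all signal from $g$ altogether. The constant $C$ in the final bound depends only on $L_f$, $s_f$, and $d_X$, as stated.
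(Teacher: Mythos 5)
Your construction is essentially the paper's proof: the paper also takes $\sigma_X\epsilon_X$ uniform on $[-\sigma_X/\sqrt{d_X},\sigma_X/\sqrt{d_X}]^{d_X}$, invokes Lemma~\ref{lemma:TV_upper_bound} (valid under adaptive sampling), and chooses the same $h\asymp(\sigma_X^{d_X}\sigma_Y^2/n)^{1/(2s_f+d_X)}$. The one cosmetic difference is that you fix $g\equiv x_0$ to kill the dependence on the policy, whereas the paper does not need to choose $g$ at all: it bounds $\E\left[1\{\|g(Z_i)+\sigma_X\epsilon_{X,i}-x_0\|_\infty<h\}\right]\le\min\bigl\{1,(h\sqrt{d_X}/\sigma_X)^{d_X}\bigr\}$ by noting this probability is maximized when $g(Z_i)=x_0$, so the bound holds uniformly over $g$ and over adaptive policies. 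Both routes are fine.

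There is, however, one genuine (though harmless) error in your side condition. You claim that $h\le\sigma_X/\sqrt{d_X}$ holds automatically whenever the target rate dominates $(\sigma_X^{d_X}/n)^{s_f/d_X}$. That implication is false: dominance is equivalent (up to constants) to $\sigma_Y^2\gtrsim\sigma_X^{2s_f}n^{-2s_f/d_X}$, a \emph{lower} bound on $\sigma_Y$, while $h\le\sigma_X/\sqrt{d_X}$ is equivalent to $\sigma_Y^2\lesssim n\,\sigma_X^{2s_f}$, an \emph{upper} bound; e.g.\ $\sigma_Y=1$ and $\sigma_X\le n^{-1/(2s_f)}$ satisfies the former but not the latter. (You also cite Proposition~\ref{prop:lower_bound_no_noise} for the rate $n^{-s_f/d_X}\sigma_X^{s_f}$, which actually comes from Proposition~\ref{prop:active_lower_bound_no_response_noise}.) The fix is easy, and in fact the case split is unnecessary: since $\min\{1,(h\sqrt{d_X}/\sigma_X)^{d_X}\}\le(h\sqrt{d_X}/\sigma_X)^{d_X}$ always, the TV bound you need holds without any restriction on $h$, which is exactly how the paper argues; alternatively, in the regime $\sigma_Y^2\gtrsim n\,\sigma_X^{2s_f}$ the $\sigma_Y/\sqrt{n}$ term dominates the claimed rate and Proposition~\ref{prop:active_lower_bound_no_covariate_noise} already gives the result.
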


\begin{proof}
    As in the proofs of Propositions~\ref{prop:large_response_noise_passive_case} and \ref{prop:passive_lower_bound_no_response_noise}, consider the case in which $\sigma_X \epsilon_{X,i}$ is uniformly distributed on the cube $[-\frac{\sigma_X}{\sqrt{d_X}},\frac{\sigma_X}{\sqrt{d_X}}]^{d_X}$. Let
    \begin{equation}
        h := \left( \frac{\|K_{d_X}\|_{\C^{s_f}}^2}{4 d_X^{d_X/2} L_f^2} \frac{\sigma_X^{d_X} \sigma_Y^2}{n} \right)^{\frac{1}{2s_f + d_X}}.
        \label{eq:high_noise_h}
    \end{equation}
    Since $\epsilon_{X,i}$ has a uniform conditional distribution independent of $Z_i$ and $g$, the function
    \[x \mapsto \E_{D, g \sim P_0} \left[ 1\{\|x + \sigma_X \epsilon_{X,i}\|_\infty < h\} \right]\]
    is maximized over $x \in \R^{d_X}$ when $x = 0$. Note that, since this bound depends \emph{only} on the distribution of $\epsilon_{X,i}$, and not on the distribution of $Z_i$ or $g$, this applies even in the active case.
    \begin{align}
        \E_{D, g \sim P_0} \left[ 1\{\|X_i - x_0\|_\infty < h\} \right]
          \notag
          & = \E_{D, g \sim P_0} \left[ 1\{\|g(Z_i) + \sigma_X \epsilon_{X,i} - x_0\|_\infty < h\} \right] \\
          \label{ineq:uniform_density}
          & \leq \E_{D, g \sim P_0} \left[ 1\{\|\sigma_X \epsilon_{X,i}\|_\infty < h\} \right]
          = \min \left\{ 1, \left( \frac{h\sqrt{d_X}}{\sigma_X} \right)^{d_X} \right\}.
    \end{align}
    where $\phi_{d_X}$ denotes the $d_X$-dimensional standard normal density.
    Plugging this into Lemma~\ref{lemma:TV_upper_bound} gives
    \[\sup_{g,f,P_Z,\epsilon_X,\epsilon_Y} \Pr_D \left[ |\hat f_{x_0} - f(x_0)| \geq C_{d_X,s_f} L_f h^{s_f} \right]
        \geq \frac{1}{2} \left( 1 - \frac{L_f h^{s_f}}{\sigma_Y \|K_{d_X}\|_{\C^{s_f}}} \sqrt{n \left( \frac{h\sqrt{d_X}}{\sigma_X} \right)^{d_X}} \right).\]
    Plugging in our choice (Eq.~\eqref{eq:high_noise_h}) of $h$ gives
    \[\inf_{\hat F} \sup_{\theta \in \Theta} \Pr_{\theta} \left[ |\hat F - F(\theta)| \geq s \right]
      \geq \frac{1}{4},\]
    for
    \begin{equation}
        s = \frac{1}{2} \left( \frac{L_f}{\|K_{d_X}\|_{\C^{s_f}}} \right)^{\frac{d_X}{2s_f + d_X}}
    \left( 4 \sqrt{d_X^{d_X}} \right)^{\frac{-s_f}{2s_f + d_X}} \left( \frac{\sigma_X^{d_X} \sigma_Y^2}{n} \right)^{\frac{s_f}{2s_f + d_X}}.
        \label{eq:lower_bound_high_noise_constant}
    \end{equation}
\end{proof}

Next, we prove a tight lower bound for the case that the covariate noise level $\sigma_X$ is large but the response noise level $\sigma_Y$ is small. This proof is quite similar to that of Proposition~\ref{prop:passive_lower_bound_no_response_noise} for the passive case, except that, (a) rather that considering $Z$ uniformly distributed on $\Z$, we make no assumptions on $Z$ and utilize only randomness coming from $\epsilon_X$, and (b) we must reason somewhat more carefully about the interdependence between samples.

\begin{proposition}[Minimax Lower Bound, Active Case, $\sigma_X$ large, $\sigma_Y$ small]
    In the active case,
    \begin{equation}
        \inf_{\hat f_{x_0}} \sup_{g,f,P_Z,\epsilon_X,\epsilon_Y}
            \Pr_D \left[ \left| \hat f_{x_0} - f(x_0) \right|
                         \geq C
                              \left( \frac{\sigma_X^{d_X}}{n} \right)^{s_f/d_X}
            \right]
        \geq \frac{1}{2e} \approx 0.18,
    \end{equation}
    where $C > 0$ (given below in Eq.~\eqref{eq:active_lower_bound_no_response_noise_constant}) depends only on $L_g$, $s_g$, $L_f$, $s_f$, $d_X$, and $d_Z$.
    \label{prop:active_lower_bound_no_response_noise}
\end{proposition}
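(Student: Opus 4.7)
The plan is to adapt the combinatorial argument from Proposition~\ref{prop:passive_lower_bound_no_response_noise}, but chosen so as to neutralize the learner's ability to adaptively pick $Z_i$. The key idea is that if $g$ is a constant function, then the learner's choice of $Z_i$ has no effect whatsoever on the distribution of $X_i$, so the active case collapses to the passive case with $X_i$'s IID from a translate of $\sigma_X \epsilon_{X,i}$.

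Concretely, I would fix $g \equiv x_0$ (the constant function sending every $z \in \Z$ to $x_0$), which trivially lies in $\mathcal{C}^{s_g}(\Z;\X;L_g)$, and take $\sigma_X \epsilon_{X,i}$ uniformly distributed on the cube $[-\sigma_X/\sqrt{d_X}, \sigma_X/\sqrt{d_X}]^{d_X}$ (which is sub-Gaussian and has local dimension $d_X$ around $0$). Under this choice, $X_i = x_0 + \sigma_X \epsilon_{X,i}$ regardless of $Z_i$, so $X_1,\ldots,X_n$ are IID and independent of the learner's strategy. Setting
\[
r := \left( \frac{L_g^{0} \sigma_X^{d_X}}{n \, d_X^{d_X/2}} \right)^{1/d_X}
\]
(i.e., chosen so that $(r\sqrt{d_X}/\sigma_X)^{d_X} = 1/n$), and assuming $n$ is large enough that $r \leq \sigma_X/\sqrt{d_X}$, one computes exactly as in Proposition~\ref{prop:passive_lower_bound_no_response_noise} that
\[
\Pr\left[ \min_{i \in [n]} \|X_i - x_0\|_\infty \geq r \right] = \left(1 - \tfrac{1}{n}\right)^n \geq \tfrac{1}{e}.
\]

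Next, I would define $f_0 \equiv 0$ and $f_1(x) := \frac{L_f r^{s_f}}{\|K_{d_X}\|_{\C^{s_f}}} K_{d_X}\!\left( \frac{x - x_0}{r} \right)$, exactly as in Eq.~\eqref{eq:f0_and_f1} with $h = r$, so that $f_0, f_1 \in \C^{s_f}(\X;\Y;L_f)$ and the support of $f_1$ is contained in the open $\ell_\infty$-ball of radius $r$ around $x_0$. Now comes the crucial observation: on the event $E := \{\min_i \|X_i - x_0\|_\infty \geq r\}$, we have $f_0(X_i) = 0 = f_1(X_i)$ for every $i$, so $Y_i = \sigma_Y \epsilon_{Y,i}$ under both hypotheses. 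Because the $\epsilon_{X,i}$ are independent of everything, the event $E$ is measurable with respect to $\epsilon_{X,1},\ldots,\epsilon_{X,n}$ alone, and its probability is the same under both hypotheses. Moreover, on $E$, the joint distribution of $(Z_1,X_1,Y_1),\ldots,(Z_n,X_n,Y_n)$ is identical under $f_0$ and $f_1$, because each $Z_i$ is a deterministic function of past data whose distribution is the same in both cases. Therefore any estimator $\hat f_{x_0}$ must err by at least $|f_1(x_0) - f_0(x_0)|/2 = \tfrac{1}{2}\frac{L_f r^{s_f}}{\|K_{d_X}\|_{\C^{s_f}}}$ with probability at least $1/2$ under at least one of the two hypotheses, conditional on $E$. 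Unconditionally this gives probability $\geq \Pr[E]/2 \geq 1/(2e)$, which together with $r \asymp (\sigma_X^{d_X}/n)^{1/d_X}$ yields the claimed bound.

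The main conceptual point (rather than obstacle) is the adaptivity issue: one must verify carefully that sequential choice of $Z_i$ does not help. This is why the choice $g \equiv x_0$ is essential: it kills the $Z_i$-dependence of $X_i$ completely, so that the sequential nature of the learner's strategy is irrelevant on the event $E$. A minor technical issue is that the statement requires $z_0$ with $g(z_0) = x_0$, but any $z_0 \in \Z$ works here. Finally, the explicit constant $C$ in Eq.~\eqref{eq:active_lower_bound_no_response_noise_constant} will come out to roughly $\frac{L_f}{2\|K_{d_X}\|_{\C^{s_f}}} \, d_X^{-s_f/2}$, matching the form used in the analogous passive proof.
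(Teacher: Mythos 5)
Your proposal is correct, and its skeleton is the same as the paper's: uniform covariate noise on the cube $[-\sigma_X/\sqrt{d_X},\sigma_X/\sqrt{d_X}]^{d_X}$, the radius $r=\sigma_X d_X^{-1/2}n^{-1/d_X}$ chosen so that the per-sample landing probability is $1/n$, the bump-function pair $f_0,f_1$ from Eq.~\eqref{eq:f0_and_f1} with $h=r$, the event that no $X_i$ falls within $r$ of $x_0$ (probability $(1-1/n)^n\geq 1/e$), indistinguishability of the two hypotheses on that event, and the same final constant as in Eq.~\eqref{eq:active_lower_bound_no_response_noise_constant}. The one place you diverge is in how adaptivity is neutralized: you fix the hard instance $g\equiv x_0$, so that $X_i$ is independent of $Z_i$ and the active problem literally collapses to the passive one, whereas the paper leaves $g$ arbitrary and instead bounds the conditional probability $\Pr[\|g(Z_i)+\sigma_X\epsilon_{X,i}-x_0\|_\infty\leq r\mid \text{past}]\leq\Pr[\|\sigma_X\epsilon_{X,i}\|_\infty\leq r]$, using that shifting the uniform cube away from $x_0$ can only decrease this probability. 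Your choice is perfectly legitimate for a minimax lower bound (the supremum includes constant $g$, which satisfies A2)--A4) trivially) and makes the sequential-dependence bookkeeping essentially vacuous; the paper's version is slightly stronger in that the $1/e$ bound holds simultaneously for every $g$ and every adaptive strategy, but this extra generality is not needed for the stated proposition. One small point of care in your writeup: when arguing that the data distribution agrees under $f_0$ and $f_1$ on the event $E$, you should allow the learner's query rule to be randomized (condition on its internal randomness, as is implicit in the paper's argument); this is cosmetic and does not affect the conclusion.
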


\begin{proof}
    Suppose $\sigma_X \epsilon_{X,i}$ is uniformly distributed on $[-\frac{\sigma_X}{\sqrt{d_X}},\frac{\sigma_X}{\sqrt{d_X}}]^{d_X} \subseteq \R^{d_X + 1}$. It is straightforward to verify that $\epsilon_{X,i}$ is sub-Gaussian with and that $\epsilon_{X,i}$ has dimension $d_X$ around $0$.
    Let $r^* := \frac{\sigma_X}{\sqrt{d_X}}$. For any $r \in [0, r^*]$, since each $\epsilon_i$ is independent of the preceding data $\{(X_j,Y_j,Z_j)\}_{j = 1}^n$, and since the function $x \mapsto \Pr \left[ \|x + \epsilon_i\|_\infty \leq r \right]$ is maximized at $x = 0$,
    \begin{align*}
        \Pr \left[ \min_{i \in [n]} \|X_i\|_\infty \geq r \right]
        & = \prod_{i = 1}^n \Pr \left[ \|X_i\|_\infty \geq r \middle| \min_{j \in [i - 1]} \|X_j\|_\infty \geq r \right] \\
        & = \prod_{i = 1}^n \left( 1 - \Pr \left[ \|X_i\|_\infty \leq r \middle| \min_{j \in [i - 1]} \|X_j\|_\infty \geq r \right] \right) \\
        & = \prod_{i = 1}^n \left( 1 - \Pr \left[ \|g(Z_i) + \epsilon_i\|_\infty \leq r \middle| \min_{j \in [i - 1]} \|X_j\|_\infty \geq r \right] \right) \\
        & \geq \prod_{i = 1}^n \left( 1 - \Pr \left[ \|\epsilon_i\|_\infty \leq r \middle| \min_{j \in [i - 1]} \|X_j\|_\infty \geq r \right] \right) \\
        & = \prod_{i = 1}^n \left( 1 - \Pr \left[ \|\epsilon_i\|_\infty \leq r \right] \right) \\
        & = \left( 1 - \left( \frac{r \sqrt{d_X}}{\sigma_X} \right)^{d_X} \right)^n.
    \end{align*}
    For $n$ sufficiently large that $r \leq r^*$, plugging in
    \begin{equation}
        r = \frac{\sigma_X}{\sqrt{d_X}} n^{-1/d_X}.
        \label{eq:min_nearest_neighbor_radius_active_case}
    \end{equation}
    gives
    \[\Pr \left[ \min_{i \in [n]} \|X_i\|_\infty \geq \frac{\sigma_X}{\sqrt{d_X}} n^{-1/d_X} \right]
        = \left( 1 - \frac{1}{n} \right)^n
        \geq 1/e > 0,\]
    since $\left( 1 - \frac{1}{n} \right)^n$ approaches $1/e$ from above as $n \to \infty$.
    
    Now let $f_0$ and $f_1$ be as in Eq.~\eqref{eq:f0_and_f1}, with $h = r$ above. Given the event $\min_{i \in [n]} \|X_i\|_\infty \geq h$, the distribution of the data $\{(X_i,Y_i,Z_i)\}_{i = 1}^n$ is independent of whether $f = f_0$ or $f = f_1$. Hence, as argued in the proof of Proposition~\eqref{prop:lower_bound_no_noise},
    \[\max_{f \in \{f_0,f_1\}} \Pr \left[ \left| \hat f_{x_0} - f(x_0) \right| \geq \frac{|f_1(x_0) - f_0(x)|}{2} \right]
        \geq \frac{1}{2} \Pr \left[ \min_{i \in [n]} \|X_i\|_\infty \leq h \right]
        \geq \frac{1}{2e}.\]
    Since, by construction of $f_0$ and $f_1$,
    \[|f_1(x_0) - f_0(x_0)| = \frac{L_f h^{s_f}}{\|K_{d_X}\|_{\C^{s_f}}},\]
    it follows that
    \[\max_{f \in \{f_0,f_1\}} \Pr \left[ \left| \hat f_{x_0} - f(x_0) \right| \geq \frac{L_f h^{s_f}}{2\|K_{d_X}\|_{\C^{s_f}}} \right]
        \geq \frac{1}{2e}.\]
    Plugging in our choice of $h = r$ from Eq~\eqref{eq:min_nearest_neighbor_radius_active_case} above gives
    \[\max_{f \in \{f_0,f_1\}} \Pr \left[ \left| \hat f_{x_0} - f(x_0) \right| \geq C \left( \frac{\sigma_X^{d_X}}{n} \right)^{s_f/d_X} \right]
        \geq \frac{1}{2e},\]
    where
    \begin{equation}
        C =  \frac{L_f d_X^{-s_f/2}}{2\|K_{d_X}\|_{\C^{s_f}}}.
        \label{eq:active_lower_bound_no_response_noise_constant}
    \end{equation}
\end{proof}

Finally, we prove a lower bound for the Active case that is tight when $\sigma_X$ is small but $\sigma_Y$ is large:

\begin{proposition}[Minimax Lower Bound, Active Case, $\sigma_X$ small, $\sigma_Y$ large]
    In the active case,
    \begin{equation}
        \inf_{\hat f_{x_0}} \sup_{g,f,P_Z,\epsilon_X,\epsilon_Y}
            \Pr_D \left[ \left| \hat f_{x_0} - f(x_0) \right|
                         \geq \frac{\sigma_Y}{4\sqrt{n}}
            \right]
        \geq \frac{1}{4}.
    \end{equation}
    \label{prop:active_lower_bound_no_covariate_noise}
\end{proposition}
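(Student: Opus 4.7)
The plan is to apply Le Cam's two-point method with two \emph{constant} candidate regression functions, which collapses the problem to the classical parametric lower bound of $\sigma_Y/\sqrt{n}$ for estimating a Gaussian mean from $n$ samples. First I would fix the nuisance parameters so that A1)--A5) hold trivially: take $P_Z$ uniform on $[0,1]^{d_Z}$ with $z_0 = 0$, let $g \equiv x_0$ be constant, and let $\epsilon_X \sim \mathcal{N}(0, I_{d_X})$ and $\epsilon_Y \sim \mathcal{N}(0, 1)$. Then define $f_0 \equiv 0$ and $f_1 \equiv \sigma_Y/\sqrt{n}$; both are constant, hence trivially in every local H\"older ball $\C^{s_f}(\X, x_0; \R; L_f)$, and they differ at $x_0$ by exactly $\sigma_Y/\sqrt{n}$.

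The core step is the KL bound between the joint distributions $P_0, P_1$ of the full trajectory $D = \{(Z_i, X_i, Y_i)\}_{i=1}^n$. By the chain rule, $D_{KL}(P_0 \| P_1) = \sum_i \E_{P_0}[D_{KL}(P_{0,i} \| P_{1,i})]$, where $P_{j,i}$ conditions on the past. In the active setting, the conditional distribution of $(Z_i, X_i)$ given the past depends only on the learner's adaptive policy, $g$, and $\epsilon_X$, \emph{none} of which vary between $f_0$ and $f_1$; so only the $Y_i \mid X_i \sim \mathcal{N}(f(X_i), \sigma_Y^2)$ factor contributes. Because $f_0, f_1$ are constant, this contribution equals exactly $(\sigma_Y/\sqrt{n})^2/(2\sigma_Y^2) = 1/(2n)$ at every step, totalling $D_{KL}(P_0 \| P_1) = 1/2$.

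Pinsker's inequality then gives $D_{TV}(P_0, P_1) \leq 1/2$, and the two-point specialization of Lemma~\ref{lemma:tsybakov_two_fuzzy_hypotheses} (with Dirac priors on $(g, f_0)$ and $(g, f_1)$ and separation $s = \sigma_Y/\sqrt{n}$) yields that some hypothesis produces error at least $s/2 = \sigma_Y/(2\sqrt{n}) \geq \sigma_Y/(4\sqrt{n})$ with probability at least $(1 - D_{TV})/2 \geq 1/4$, as claimed. Alternatively, one can obtain the same conclusion by invoking Lemma~\ref{lemma:TV_upper_bound} directly with $h$ chosen large enough that $\E[1\{\|X_i - x_0\|_\infty < h\}] = 1$ for all $i$, so the $\sqrt{\cdot}$ factor reduces to $\sqrt{n}$.

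There is no real obstacle here: the constant-function choice sidesteps any interaction with the learner's adaptive strategy because the per-step KL contribution is deterministic, independent of both $X_i$ and the algorithm's choice of $Z_i$. This is also the structural reason why the $\sigma_Y/\sqrt{n}$ term appears identically in both the active and passive minimax rates --- response noise in $Y$ limits estimation of $f(x_0)$ to the classical parametric rate, regardless of how cleverly one chooses the covariates.
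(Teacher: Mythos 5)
Your proof is correct. It reaches the same conclusion as the paper but by a slightly different, more elementary route: the paper simply invokes its general two-fuzzy-hypotheses machinery (Lemma~\ref{lemma:TV_upper_bound}) with the bump-function pair $f_0 = 0$, $f_1$ of height $\asymp \sigma_Y/\sqrt{n}$, plugs in the trivial bound $\sum_{i}\E\left[1\{\|X_i - x_0\|_\infty < h\}\right] \leq n$, and reads off the $\sigma_Y/(4\sqrt{n})$ threshold; you instead run a bare Le Cam two-point argument with two \emph{constant} functions, so the per-step KL contribution is exactly $1/(2n)$ independent of $X_i$ and of the learner's policy, and no bump construction or indicator bookkeeping is needed. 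The underlying mechanism is identical in both proofs --- by the chain rule only the $Y_i \mid X_i$ channel distinguishes the hypotheses, so adaptivity in $Z$ is irrelevant and Pinsker gives $D_{\mathrm{TV}} \leq 1/2$ --- and your verification that constant $f$, constant $g$, Gaussian noises, and uniform $P_Z$ satisfy A1)--A5) is all that is needed to place both hypotheses in the supremum class. Your version is more self-contained and makes transparent why this term is insensitive to active sampling; the paper's version buys brevity by reusing a lemma it needs anyway for the other lower-bound terms, and its localized bump construction is the one that generalizes to those terms (where the constant-function trick would give nothing). One small correction to your closing remark: the paper does not choose $h$ ``large enough that the expectation equals $1$''; it fixes the specific small $h = \left( \sigma_Y \|K_{d_X}\|_{\C^{s_f}} / (2 L_f \sqrt{n}) \right)^{1/s_f}$ dictated by the desired separation and merely upper-bounds the indicator sum by $n$ --- taking $h$ genuinely large would make the right-hand side of Eq.~\eqref{ineq:general_stochastic_lower_bound} vacuous. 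This does not affect your main argument, which stands on its own.
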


\begin{proof}
    Plugging the trivial bound
    \[\sum_{i = 1}^n \E_{D, g \sim P_0} \left[ 1\{\|X_i - x_0\|_\infty < h\} \right]
      \leq n
      \quad \text{ and } \quad
      h = \left( \frac{\sigma_Y \|K_{d_X}\|_{\C^{s_f}}}{2L_f\sqrt{n}} \right)^{1/s_f}\]
    into Lemma~\ref{lemma:TV_upper_bound} gives
    \[\inf_{\hat F} \sup_{\theta \in \Theta} \Pr_{\theta} \left[ |\hat F - F(\theta)| \geq \frac{\sigma_Y}{4\sqrt{n}} \right]
      \geq \frac{1}{4}.\]
\end{proof}

\section{Supplementary Lemmas}
\label{app:supplementary_lemmas}

For the reader's convenience, here we state two standard concentration inequalities for sub-Gaussian random variables, used in our upper bounds.
\begin{lemma}[Theorem 1.14 of \citet{rigollet201518}]
    Suppose that $X_1,...,X_n$ are IID observations of a sub-Gaussian random variable. Then, for any $\delta \in (0, 1)$,
    \[\Pr_{X_1,...,X_n} \left[ \max_{i \in [n]} |X_i| \leq \sqrt{2 \log \left( \frac{2n}{\delta} \right)} \right] \leq \delta.\]
    \label{lemma:sub_gaussian_maximal_inequality}
\end{lemma}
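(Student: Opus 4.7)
The plan is to derive the maximal inequality from the standard sub-Gaussian tail bound via a Chernoff argument and a union bound. First, I would invoke the defining inequality $\E[e^{\langle \epsilon, t\rangle}] \leq e^{\|t\|_2^2/2}$ from Definition~\ref{def:sub_gaussain} in the scalar case ($d=1$), and apply Markov's inequality to $e^{\lambda X_i}$ for an arbitrary $\lambda > 0$, yielding $\Pr[X_i \geq t] \leq e^{\lambda^2/2 - \lambda t}$ for any $t > 0$. Optimizing the exponent in $\lambda$ (taking $\lambda = t$) gives the one-sided Gaussian tail $\Pr[X_i \geq t] \leq e^{-t^2/2}$.

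Next, observing that $-X_i$ satisfies the same sub-Gaussian bound (since $\E[e^{\lambda(-X_i)}] = \E[e^{(-\lambda)X_i}] \leq e^{\lambda^2/2}$), the same Chernoff argument applied to $-X_i$ yields $\Pr[X_i \leq -t] \leq e^{-t^2/2}$, and combining these by a union bound produces the two-sided tail $\Pr[|X_i| \geq t] \leq 2 e^{-t^2/2}$.

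Finally, I would take a union bound over $i \in [n]$ to conclude
\[
\Pr\!\left[\max_{i \in [n]} |X_i| \geq t\right] \leq 2n e^{-t^2/2},
\]
and then choose $t = \sqrt{2 \log(2n/\delta)}$ so that the right-hand side equals $\delta$. This exactly yields the stated conclusion (interpreting the inequality as the complementary event, i.e., the probability of exceeding the threshold is at most $\delta$).

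There is essentially no main obstacle here: the argument is a textbook application of the Chernoff method plus a union bound, and no part of it depends on the specific learning setup of the paper. The only mild subtlety is the two-sided step, which uses that sub-Gaussianity is symmetric under negation — a fact that follows immediately from the definition given in Definition~\ref{def:sub_gaussain} by substituting $-t$ for $t$.
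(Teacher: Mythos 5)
Your proof is correct and is exactly the standard Chernoff-plus-union-bound argument behind Theorem 1.14 of \citet{rigollet201518}, which the paper cites without reproducing a proof of its own. You also handled the one genuine subtlety appropriately: the lemma as printed has the inequality pointing the wrong way (it should bound the probability that $\max_{i \in [n]} |X_i|$ \emph{exceeds} $\sqrt{2 \log(2n/\delta)}$ by $\delta$), and your derivation, which gives $\Pr[\max_{i \in [n]} |X_i| \geq t] \leq 2n e^{-t^2/2}$ and then sets $t = \sqrt{2\log(2n/\delta)}$, establishes precisely that intended statement.
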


\begin{lemma}[Proposition 2.1 of \citet{wainwright2019high}]
    Suppose that $X_1,...,X_n$ are IID observations of a sub-Gaussian random variable. Then, for any $\delta \in (0, 1)$,
    \[\Pr_{X_1,...,X_n} \left[ \frac{1}{n} \sum_{i = 1}^n X_i \leq \sqrt{\frac{2}{n} \log \frac{1}{\delta}} \right] \leq \delta.\]
    \label{lemma:sub_gaussian_mean_concentration}
\end{lemma}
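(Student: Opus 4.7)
The plan is to prove this via the classical Chernoff (exponential Markov) method, which is the standard route to sub-Gaussian tail bounds and matches how \citet{wainwright2019high} derives Proposition 2.1. Noting that the right-hand quantity $\sqrt{(2/n)\log(1/\delta)}$ is positive, the natural reading is the one-sided upper-tail bound $\Pr[\tfrac{1}{n}\sum_i X_i \geq \sqrt{(2/n)\log(1/\delta)}] \leq \delta$ (the stated ``$\leq$'' version is obtained by applying the same argument to $-X_i$, which is also sub-Gaussian with parameter $1$ under Definition~\ref{def:sub_gaussain}). So I will prove the upper-tail statement and remark that the other direction is symmetric.

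First I would set $S_n = \sum_{i=1}^n X_i$ and fix $t > 0$. For any $\lambda > 0$, Markov's inequality applied to the nonnegative random variable $e^{\lambda S_n}$ gives
\[
\Pr\!\left[ S_n \geq nt \right]
  = \Pr\!\left[ e^{\lambda S_n} \geq e^{\lambda n t} \right]
  \leq e^{-\lambda n t}\, \E\!\left[ e^{\lambda S_n} \right].
\]
Because the $X_i$ are independent, the joint moment generating function factorizes as $\E[e^{\lambda S_n}] = \prod_{i=1}^n \E[e^{\lambda X_i}]$. By Definition~\ref{def:sub_gaussain} (taking $t = \lambda e_1$ in the scalar case $d=1$), each factor is at most $e^{\lambda^2/2}$, so $\E[e^{\lambda S_n}] \leq e^{n \lambda^2 / 2}$. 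Plugging in yields $\Pr[S_n \geq nt] \leq \exp(n\lambda^2/2 - \lambda n t)$.

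Next I would optimize over $\lambda > 0$. The exponent $n\lambda^2/2 - \lambda n t$ is a quadratic in $\lambda$ minimized at $\lambda^* = t$, giving the sub-Gaussian Chernoff bound
\[
\Pr\!\left[ \tfrac{1}{n} S_n \geq t \right] \leq e^{-n t^2 / 2}.
\]
Finally I would invert this: setting $e^{-nt^2/2} = \delta$ and solving for $t$ yields $t = \sqrt{(2/n)\log(1/\delta)}$, which is exactly the threshold in the statement. Applying the same argument to the sub-Gaussian variables $-X_i$ gives the matching lower-tail bound, covering the statement as written.

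There is no serious obstacle here; the only point requiring a little care is the reduction from the vector sub-Gaussianity of Definition~\ref{def:sub_gaussain} to the scalar MGF bound $\E[e^{\lambda X}] \leq e^{\lambda^2/2}$, which I would handle by observing that choosing $t = \lambda e_1 \in \R^d$ (or simply $t = \lambda$ when $d=1$) in the definition directly yields the needed inequality. Everything else (independence, Chernoff, optimization) is routine.
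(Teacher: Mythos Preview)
Your proof is correct and is precisely the standard Chernoff/exponential-Markov argument that \citet{wainwright2019high} uses for Proposition~2.1. The paper itself does not give a proof of this lemma; it merely states it with a citation to Wainwright, so your write-up is exactly what one would supply to fill that gap, and your remark about the inequality direction (upper tail versus the ``$\leq$'' as printed) is a fair reading of an apparent typo.
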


\section{Details of Experimental Results}
\label{app:experiments}

This section provides further details regarding the experimental results reported in the main paper. Python code and instructions for reproducing Figures~\ref{fig:experimental_results}, \ref{fig:SIR_performance}, and \ref{fig:infected_plots} of the main paper are available at \url{https://gitlab.tuebingen.mpg.de/shashank/indirect-active-learning}. Experiments we run in Python 3.9, on an Ubuntu laptop with Intel Core i7-10750H CPU.

\paragraph{Code}
In the code included in the supplementary material:
\begin{itemize}
    \item \texttt{README.md} explains how to set up and run the code.
    \item \texttt{knn\_regressors.py} implements the Passive, Passive CV, Active, Active CV, Oracle, and Oracle CV estimators used in Experiments 1 and 2 of the main paper.
\end{itemize}

\subsection{Synthetic Data}
\label{app:synthetic_data}

\paragraph{Code}
In the code included in the supplementary material:
\begin{itemize}
    \item \texttt{sampler.py} produces joint samples of $(X, Y, Z)$, as well as conditional samples of $(X, Y, Z)$ given $Z$.
    \item \texttt{figure\_3.py} carries out Experiment 1 described in Section~\ref{subsec:synthetic_data} of the main paper. It also generates Figure~\ref{fig:experimental_results}.
\end{itemize}

\subsection{Epidemiological Forecasting Application}
\label{app:SIRD}

\paragraph{SIRD Simulation}
The stochastic SIRD simulation was run for $T = 100$ time steps over a total population of $N = 1000$, beginning with $10$ initially infected individuals ($I_0 = 10$), $990$ susceptible individuals ($S_0 = 990$), and no deceased or recovered individuals ($R_0 = D_0 = 0$). The simulation takes in three $[0, 1]$-valued hyperparameters, an infection rate $\beta$, a recovery rate $\gamma$, and a death rate $\delta$.

Given these inputs and initial conditions, the stochastic SIRD simulation proceeds as follows.
At each timepoint $t$, a random number of individuals from the Susceptible compartment is newly infected, according to
\[S_{t - 1} - S_t \sim \mathsf{Binomial}(S_{t - 1}, \beta I/N),\]
where the rate $\beta I/N$ of new infections is proportional to the proportion $I/N$ of infected individuals in the population. Meanwhile, members of the Infected compartment either (i) recover, (ii) die, or (iii) stay infected according to
\[(R_t - R_{t - 1}, D_t - D_{t - 1}, \text{---} ) \sim \mathsf{Multinomial}(I_{t - 1}, (\gamma, \delta, 1 - (\gamma + \delta)))\]
distribution. The new number of infected individuals is then updated according to
\[I_t = N - (S_t + R_t + D_t).\]
This process was iterated for $t \in \{1,...,T\}$.

\paragraph{Hyperparameters Distributions}
The first component of the distribution $P_Z$ on $[0, 1]^3$ was a $\mathsf{Beta}(2, 8)$ distribution, while the second and third components were the first two components of a $\mathsf{Dirichlet}(1.0, 0.3, 8.7)$ distribution, reflecting plausible ranges for these parameters in recent COVID-19 outbreaks~\citep{calafiore2020time,bousquet2022deep}.

\paragraph{Code}
In the code included in the supplementary material:
\begin{itemize}
    \item \texttt{SIR\_simulation.py} produces joint samples of $(X, Y, Z)$, as well as conditional samples of $(X, Y, Z)$ given $Z$.
    \item \texttt{figure\_4.py} carries out Experiment 2 described in Section~\ref{subsec:SIRD} of the main paper. It also generates Figure~\ref{fig:SIR_performance}.
    \item \texttt{figure\_5.py} generates Figure~\ref{fig:infected_plots} of the main paper.
\end{itemize}

\end{document}